\newtheorem{theorem}{Theorem}
\newtheorem{lemma}{Lemma}
\newtheorem{proposition}{Proposition}
\newtheorem{corollary}{Corollary}
\newtheorem{assumption}{Assumption}
\newtheorem{example}{Example}
\definecolor{lightblue}{RGB}{220,235,255}
\newcommand\fro[1]{\| #1 \|_{\rm{F}}}
\newcommand\op[1]{\left\| #1 \right\|}
\newcommand\bop[1]{\big\| #1 \big\|}
\newcommand\psione[1]{\left\| #1 \right\|_{\psi_1}}
\newcommand\psitwo[1]{\left\| #1 \right\|_{\psi_2}}
\newcommand\lzero[1]{\| #1 \|_{0}}
\newcommand{\mat}[1]{\begin{bmatrix}#1 \\ \end{bmatrix}}
\newcommand{\inp}[2]{\langle #1,#2\rangle}
\def\blfootnote{\gdef\@thefnmark{}\@footnotetext}
\def\calE{{\mathcal E}}
\def\calF{{\mathcal F}}
\def\calG{{\mathcal G}}
\def\calH{{\mathcal H}}
\def\calL{{\mathcal L}}
\def\calP{{\mathcal P}}
\def\calR{{\mathcal R}}
\def\calX{{\mathcal X}}
\def\EE{{\mathbb E}}
\def\NN{{\mathbb N}}
\def\OO{{\mathbb O}}
\def\PP{{\mathbb P}}
\def\RR{{\mathbb R}}
\def\SS{{\mathbb S}}
\def\g{{\boldsymbol g}}
\def\q{{\boldsymbol q}}
\def\s{{\boldsymbol s}}
\def\u{{\boldsymbol u}}
\def\v{{\boldsymbol v}}
\def\w{{\boldsymbol w}}
\def\x{{\boldsymbol x}}
\def\y{{\boldsymbol y}}
\def\z{{\boldsymbol z}}
\def\A{{\boldsymbol A}}
\def\B{{\boldsymbol B}}
\def\C{{\boldsymbol C}}
\def\D{{\boldsymbol D}}
\def\E{{\boldsymbol E}}
\def\G{{\boldsymbol G}}
\def\H{{\boldsymbol H}}
\def\I{{\boldsymbol I}}
\def\J{{\boldsymbol J}}
\def\M{{\boldsymbol M}}
\def\N{{\boldsymbol N}}
\def\O{{\boldsymbol O}}
\def\P{{\boldsymbol P}}
\def\Q{{\boldsymbol Q}}
\def\R{{\boldsymbol R}}
\def\S{{\boldsymbol S}}
\def\T{{\boldsymbol T}}
\def\U{{\boldsymbol U}}
\def\V{{\boldsymbol V}}
\def\W{{\boldsymbol W}}
\def\X{{\boldsymbol X}}
\def\Y{{\boldsymbol Y}}
\def\Z{{\boldsymbol Z}}
\def\bDelta{{\boldsymbol \Delta}}
\def\bLa{{\boldsymbol \Lambda}}
\def\bGa{{\boldsymbol \Gamma}}
\def\tr{\textsf{tr}}
\def\snr{\textsf{SNR}}
\def\hat{\widehat}
\def\tilde{\widetilde}
\def\bXi{{\boldsymbol \Xi}}
\def\hjive{{\sf HeteroJIVE}}
\newcommand\brac[1]{\left(#1\right)}
\newcommand\bbrac[1]{\big(#1\big)}
\newcommand\ebrac[1]{\left\{#1\right\}}
\newcommand\sqbrac[1]{\left[#1\right]}
\newcommand\ab[1]{\left|#1\right|}
\begin{document}
\title{Spectral Joint Subspace Estimation for Heterogeneous Multi-View Data: Geometry and Reweighting}\blfootnote{Author names are sorted alphabetically.}

\author{Jingyang Li\thanks{Fudan University. Email: \texttt{jjyyli@fudan.edu.cn}.}
\and Zhongyuan Lyu\thanks{The University of Sydney. Email: \texttt{zhongyuan.lyu@sydney.edu.au}.}
}
\maketitle

\begin{abstract}
Many modern datasets consist of multiple related matrices measured on a common set of units, with the goal of recovering a shared low-dimensional subspace. The Angle-based Joint and Individual Variation Explained (AJIVE) framework addresses this problem through equal-weight aggregation, which can be suboptimal when views exhibit statistical heterogeneity in signal-to-noise ratios and dimensions, as well as structural heterogeneity from individual components. 
For equal-weight AJIVE, we show that the previously identified ``non-diminishing'' error barrier is geometry dependent: under near-orthogonal deterministic loading orientations, the second-order term is reduced, whereas under sign-symmetric random loadings, it is centered and averages out, yielding a $K^{-1/2}$-type rate without iterative refinement. Under a majority sign-alignment condition in rank-one setting, a bias at the squared single-view perturbation scale can persist. For general weights, we establish error bounds that disentangle the two layers of heterogeneity, and propose \hjive, the weighted AJIVE estimator using an explicit weight that is optimal whenever its identifiability gap is constant. We also provide a data-driven plug-in implementation of \hjive, together with an optional geometry-adaptive extension of this data-driven procedure. Simulations and analyses of multi-omics and image data illustrate the practical benefits of \hjive.
\end{abstract}
\section{Introduction}

In modern applications, data are often recorded as collections of matrices on a common set of units. Examples include computational biology, where the same tissue samples are profiled by multiple assays in TCGA \citep{cancer2012comprehensive, lock2013joint, zhang2022joint, sergazinov2024spectral}, chemometrics, where the same chemicals are characterized by multiple instruments \citep{smilde2003framework, lofstedt2013global}, and autonomous driving, where synchronized sensors (cameras, LiDAR, radar) are recorded for the same time-stamped scenes \citep{geiger2013vision,caesar2020nuscenes}. In all these examples, the data can be represented by $K$ matrices $\ebrac{\A_k}_{k=1}^K$, and each $\A_k$ is of dimension $n$ by $d_k$, where $n$ is the number of units and $d_k$ is the view-specific dimension. 
It is often interesting to identify the joint latent structure among $\A_k$'s while allowing view-specific structures to persist. For instance, on TCGA multi-omics, the joint low-dimensional space captures subtype structure while individual parts retain assay-specific effects (gene expression, miRNA, genotype, protein/RPPA), and ignoring the individual structure  can lead to severe information loss \citep{lock2013joint,gaynanova2019structural}. 

The Joint and Individual Variation Explained (JIVE) framework \citep{lock2013joint} formalizes this by assuming data matrices $\ebrac{\A_k}_{k=1}^K$ admit the following decomposition: 
\begin{align}\label{eq:JIVE-model}
	\A_k = \U\V_k^\top + \U_k\W_k^\top + \E_k\in\RR^{n\times d_k},\qquad k\in[K].
\end{align}
Here $\U\in\OO_{n,r}$ and $\U_k\in \OO_{n,r_k}$ are  the \emph{joint} and \emph{individual} subspaces, respectively, where  $\OO_{p,q}$ denotes the space of orthonormal matrices of dimension $p$ by $q$, $\V_k\in \RR^{d_k\times r}$ and $\W_k\in \RR^{d_k\times r_k}$  are the loading matrices, and $\E_k$ is the random noise. It is commonly assumed that $r+r_k\ll n\wedge d_k$ for all $k\in[K]$, so that one can leverage the spectral method to exploit the low-rankness of $\U\V_k^\top + \U_k\W_k^\top$. Among the most popular algorithms for this task, Angle-based JIVE (AJIVE) \citep{feng2018angle} first estimates the joint and individual subspaces $\mat{\U& \U_k}$ by performing SVD on each $\A_k$ individually, denoted by $\tilde \U_k$, then aggregates the results to estimate the shared subspace $\U$ via a second SVD, i.e.,
\begin{align*}
    \hat\U=\text{Eigen}_r\brac{\sum_{k=1}^K\tilde \U_k\tilde \U_k^\top},
\end{align*}
where $\text{Eigen}_r(\A)$ denotes the leading-$r$ eigenvectors of matrix $\A$. Though equipped with  different names, this two-stage spectral algorithm has  been widely used in many integrative analyses  \citep{fan2019distributed, arroyo2021inference,li2024federated,tang2025mode} due to its computational efficiency. 

The standard AJIVE algorithm implicitly assumes that all views contribute equally to the estimation of the joint subspace. Nonetheless, a pervasive feature of multi-view data in practice is heterogeneity across views, i.e., the SNR and view-specific dimensions $\ebrac{d_k}_{k=1}^K$ of $\A_k$'s may vary substantially from one view to another. We term this \emph{statistical heterogeneity}. In such cases, equal-weight aggregation in the second step of AJIVE can be suboptimal, as it treats all views as equally informative. We illustrate this limitation with the following toy example.
\begin{example}\label{exmp:intro}
    Consider $K=2$, $\A_1=\U\V^\top+2\U_1\W^\top+\E_1$ and $\A_2=100(\U\V^\top+2\U_2\W^\top)+\E_2$, where $\E_1,\E_2\overset{\rm i.i.d.}{\sim} N(0,0.1^2)$. 
   In this case, the performance of $\hat\U:=\text{Eigen}_r\brac{\tilde \U_1\tilde \U_1^\top+\tilde \U_2\tilde \U_2^\top}$ is largely deteriorated by the low-SNR of $\A_1$. At the same time, direct SVD on $\A_2$ alone fails to identify $\U$ due to the factor $2$ in front of the individual component $\U_2\W^\top$. 
\end{example}
\begin{figure}[!th]
    \centering
    \includegraphics[width=0.6\textwidth]{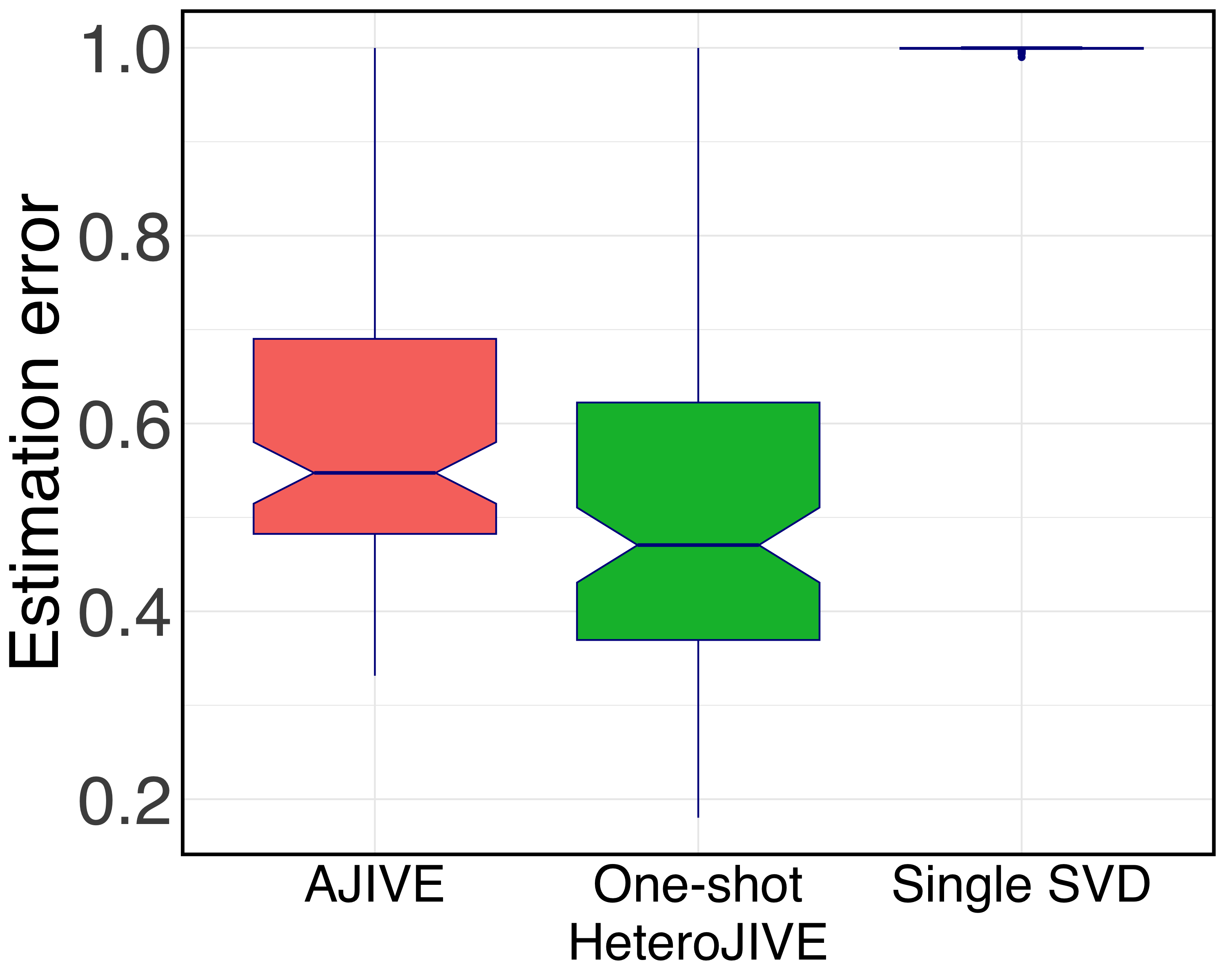}
\hfill
    \caption{Estimation error of $\|\hat\U\hat\U^\top-\U\U^\top\|$ across methods for \Cref{exmp:intro} averaged over $100$ replicates. The average weights for one-shot \hjive{} are given by $(0.99995,0.00005)$. }
    \label{fig:exmp}
\end{figure}
Beyond statistical heterogeneity, the JIVE model introduces a more fundamental challenge. Unlike standard multi-view low-rank models which assume data consist solely of a low-rank component and noise (i.e., $\A_k = \U\V^\top + \E_k$) \citep{hong2023optimally,ma2024optimal,baharav2025stacked}, the JIVE framework explicitly incorporates view-specific individual variations $\{\U_k \W_k^\top\}_{k=1}^K$. 
We term the interaction between these individual subspaces and the joint subspace as \emph{structural heterogeneity}.

To address these challenges, we propose \hjive, defined as the weighted AJIVE estimator using the explicit weights given in \Cref{sec:weights}. By down-weighting views with lower SNR or higher interference, we prevent noisy views from contaminating the joint signal. 
Although the exact minimizer of the fully geometry-dependent criterion need not admit a general closed form, we derive an  explicit weighting rule and prove that it is optimal relative to the oracle criterion whenever it preserves a constant identifiability gap. The resulting weighted estimator is \hjive. We further provide a computationally efficient data-driven plug-in implementation and an optional geometry-adaptive extension. For illustration, we consider the setup in \Cref{exmp:intro}, and \Cref{fig:exmp} demonstrates that
\hjive{} significantly outperforms both AJIVE and single SVD (on $\A_2$). Notably, even with a negligible weight ($0.00005$) assigned to the noisy view $\A_1$, integrating both views yields superior identification compared to using $\A_2$ alone.

To connect the weighting problem with the baseline AJIVE geometry, we first analyze equal-weight AJIVE. Our analysis identifies the geometry of the second-order term and provides the decomposition used later for general weights.
Even for AJIVE, its statistical guarantee has only recently been clarified, especially under the large-$K$ regime. In particular, \cite{yang2025estimating} 
establishes 
fundamental limits of AJIVE, identifying a ``non-diminishing" barrier in the estimation error: 
in low-SNR regimes, a second-order bias term does not vanish even as $K$ increases.

To demystify the phenomenon, in this paper we show that the non-diminishing term is governed by the geometry between the joint and individual loadings. When the their interactions are small, or when their signs are symmetrically randomized across views, this second-order contribution is centered and averages out, yielding a $K^{-1/2}$-type rate. In contrast, under a majority sign-alignment condition (see \Cref{ass:lb-majority}) in the rank-one setting, we prove an algorithm-specific lower bound showing that the equal-weight AJIVE output can retain a second-order bias at the scale of the squared single-view perturbation. Thus, the same AJIVE map can exhibit either view-wise averaging or persistent second-order bias, depending on the loading geometry. Therefore, our results sharpen the existing upper bounds and clarify the precise regimes in which the non-vanishing second-order term can be intrinsic under certain geometric conditions.


\subsection{Main contributions}
Our contributions span the geometry of equal-weight AJIVE, sharp guarantees for deterministic and random loadings, general-weight theory, and a data-driven reweighting procedure.
\begin{table}[!htbp]
\centering
\caption{Comparison of theoretical guarantees of our framework under the equal-weight case (i.e., AJIVE) with the results of \cite{yang2025estimating} and \cite{tang2025mode}. For deterministic loadings, explicit forms of the error rates are deferred to \Cref{sec:theory}. For random loadings, our results in the table assume that the individual subspaces $\ebrac{\U_k}_{k=1}^K$ are also random, to facilitate comparison with \cite{tang2025mode}. The distributional setting of \cite{tang2025mode} is more general, and the table compares only the intersection of their assumptions with ours. Here,  $\varepsilon$ denotes the subspace estimation error of the initial SVD on a single view, and $\theta$ characterizes the alignment of individual subspaces $\ebrac{\U_k}_{k=1}^K$.}
\vspace{0.2cm}

\resizebox{\textwidth}{!}{%
\begin{tabular}{ll|cc|cc}
\hline
\multicolumn{2}{c|}{} & \multicolumn{2}{c|}{\rule{0pt}{1.2em}Deterministic}& \multicolumn{2}{c}{Random}\\[0.3em]
\hhline{~~|----}
\multicolumn{2}{c|}{} & \rule{0pt}{1.2em}\cite{yang2025estimating} & \multicolumn{2}{c}{\cellcolor{lightblue}\textbf{This paper}} & \cite{tang2025mode} \\[0.3em]
\hline
\multicolumn{2}{c|}{\textbf{Model}} 
    & \makecell[c]{JIVE\\with equal $\sigma_k$'s} 
    &\multicolumn{2}{c}{\cellcolor{lightblue} General JIVE}
    & \makecell[c]{JIVE\\with $\W_k = \W$} \\

\hline
\multicolumn{2}{c|}{\textbf{Algorithm}} 
    & \makecell[c]{Two-stage spectral} 
    &\multicolumn{2}{c}{\cellcolor{lightblue} Two-stage spectral}
    & \makecell[c]{Two-stage spectral followed\\  by iterative refinement} \\

\hline
\multirow{4}{*}{\textbf{Error rate}} 
  & \multicolumn{1}{|c|}{w.r.t. $K$} 
  & \makecell[c]{Non-vanishing bound\\as $K\rightarrow\infty$} 
  & \cellcolor{lightblue}{\makecell[c]{Sharper upper bound; \\vanishing under \\loading-angle cancellation; \\$\Omega(\varepsilon^2)$ lower bound \\under majority sign alignment}}
  & \multirow{4}{*}{\cellcolor{lightblue}\makecell[c]{\vspace{-2cm}\\Matches minimax lower bound\\in the overlapping regime} } 
  & \multirow{4}{*}{\makecell[c]{\vspace{-1.2cm}\\Initialization error \\decay at $O(\varepsilon^2)$; \\Refinement error \\decay at $\tilde O(K^{-1/2})$}} \\
\hhline{~|---| >{\arrayrulecolor{lightblue}}- >{\arrayrulecolor{black}}~}
  & \multicolumn{1}{|c|}{w.r.t. $\varepsilon$}
  & \makecell[c]{First-order suboptimal\\ when $K\theta\lesssim 1$} 
  & \cellcolor{lightblue}{First-order optimal} 
  & \cellcolor{lightblue}{\makecell[c]{\vspace{-2cm}\\decay at $\tilde O(\varepsilon K^{-1/2})$}}
  & \\
\hline
\multicolumn{2}{c|}{\rule{0pt}{1.5em}\textbf{Condition on $K$}} 
    & None
    & \cellcolor{lightblue}\makecell[c]{None for the upper bound;\\
    $K\gtrsim\log n$ for the lower bound}
    & \cellcolor{lightblue}{$K \gtrsim \log n$} 
    & \makecell[c]{$K\gtrsim \varepsilon^{-4}\log n$} \\[0.5em]
\hline
\end{tabular}%
}
\label{tab:comparison}
\end{table}

First, for equal-weight AJIVE with deterministic loadings, we derive a sharper upper bound that explicitly tracks the joint--individual loading interactions. We also prove an algorithm-specific lower bound under a majority sign-alignment condition in the rank-one setting, showing that a second-order bias that does not vanish with $K$ can persist. Together, these results identify when the non-diminishing term is generated by the AJIVE algorithm rather than by a loose analysis.

Second, under sign-symmetric random loadings, the second-order interaction averages across views, yielding a $K^{-1/2}$-type rate. When the individual subspaces are also random, our rate matches the minimax lower bound up to logarithmic factors, which refines the recent result in \cite{tang2025mode} in an overlapping regime summarized in \Cref{tab:comparison}.

Third, we extend the analysis to general weights and obtain non-asymptotic bounds that separate statistical heterogeneity from structural heterogeneity. Based on these bounds, we derive an explicit weight, which gives optimality under a constant identifiability gap condition. When individual components are absent, this weight agrees to the recent optimality results for SVD-Stack established by \cite{baharav2025stacked}.

Finally, we provide a one-shot data-driven plug-in implementation of \hjive, together with an optional geometry-adaptive extension of this data-driven procedure. The resulting method is computationally simple and empirically effective. Extensive simulations and real-data analyses on TCGA-BRCA multi-omics and Caltech101-20 image data illustrate the practical benefits of \hjive.

\subsection{Related Work}
Our work is closely related to recent developments in AJIVE, heterogeneous PCA, and distributed/federated PCA. Regarding statistical guarantees for JIVE, \cite{ma2024optimal} analyze Stack-SVD for the special case of $K=2$. However, their result requires prior knowledge of the column indices of $\U$ within the singular vectors of the stacked signal matrix. In a broader setting, \cite{baharav2025stacked} systematically compare Stack-SVD, SVD-Stack, and their weighted variants, though their asymptotic analysis assumes the absence of individual components (i.e., $\U_k\W_k^\top = 0$). The most relevant work to ours is \cite{yang2025estimating}, which analyzes AJIVE under a homogeneous setting. They show that AJIVE attains an $O(K^{-1/2})$ rate in a high-SNR regime, but the error bound contains a non-vanishing term as $K$ increases when the SNR is low.  In parallel, \cite{tang2025mode} study a multi-view spiked covariance model (where $d_k = d$ and $\W_k = \W$) and propose the MOP-UP algorithm, which combines an AJIVE-type initialization with alternating projections. In the overlapping regimes, our analysis shows that an AJIVE-type initialization can already attain a $K^{-1/2}$-type rate matching the  minimax lower bound up to logarithmic factors.

Beyond the JIVE model, a large multi-view literature also studies the extraction of shared information from multiple data sets. One strand is based on cross-view dependence, including classical canonical correlation analysis (CCA) and partial least squares (PLS) estimate low-dimensional representations through cross-view correlation or covariance \citep{Hotelling1936,WoldSjostromEriksson2001}, and recent high-dimensional work has characterized detection and recovery thresholds in noisy multi-view or correlated spike models \citep{BykhovskayaGorin2023,KeupZdeborova2025,MergnyZdeborova2025}. These works clarify when a common signal can be recovered from multiple noisy views, but their
targets are correlation- or covariance-based latent directions rather than a decomposition into joint and view-specific low-rank components. A second, more directly related, strand is decomposition-based methods, such as D-CCA, D-GCCA, and SLIDE,  which are able to separate common, distinctive, or partially shared variation across data sets \citep{ShuWangZhu2020,ShuQuZhu2022,gaynanova2019structural}. These methods are close in modeling motivation, but they do not directly address simultaneous noise heterogeneity and view-specific subspace interference. Finally, nonlinear fusion methods based on alternating diffusion and common-manifold learning extract common variables or shared geometry by combining diffusion operators across modalities \citep{TalmonWu2019,KatzTalmonLoWu2019}. In this literature, shared structure is encoded by the geometry of a common latent manifold, rather than by an additive low-rank component. This flexibility makes the framework well suited for nonlinear data integration, but it also makes it less transparent how to obtain explicit finite-sample error rates for the recovered common structure, or how the accuracy should improve as information is pooled across multiple views.

A related literature studies PCA under heterogeneity. One line of work mainly models statistical heterogeneity \citep{oba2007heterogeneous, zhang2022heteroskedastic, arroyo2021inference, hong2023optimally, yan2024inference}, while another focuses on heterogeneous individual components \citep{lock2013joint, zhou2015group, shi2024personalized}. Most of these methods encode noise heterogeneity at the modeling level and then demonstrate a certain robustness to heterogeneity, either theoretically or empirically, but do not explicitly adjust the estimation procedure to exploit it. This is likely to be suboptimal, especially when a few high-SNR views carry most of the information.  Two exceptions are \cite{oba2007heterogeneous, hong2023optimally}, both of which propose reweighting an empirical loss to account for heteroskedastic noise and derive optimal weights in their settings. However, they only handle the heterogeneity in noise, and how to extend their weighting schemes to two-layer heterogeneity remains unclear.

Finally, we note that our proposed \hjive{} can be computed in a distributed fashion across local machines: each site computes its own local low-rank approximation and sends only a low-dimensional summary to a central server, where the weighted aggregation step is performed. This architecture is closely aligned with the literature on distributed PCA \citep{liang2014improved, fan2019distributed, zheng2022limit} and federated PCA \citep{grammenos2020federated,li2024federated}, and makes \hjive{} suitable for large-scale or privacy-constrained multi-view data where raw samples cannot be pooled.

\subsection{Notation}
For any positive integer $n$, we denote the set of integers $\{1, \dots, n\}$ by $[n]$. We use bold-face capital letters (e.g. $\A,\E$) to denote matrices, bold-face lower-case letters (e.g. $\x, \y$) for vectors. We denote by $\fro{\cdot}$ the Frobenius norm  and $\op{\cdot}$ the operator norm of matrices. We denote by $\lambda_i(\A)$ the $i$-th largest singular value of $\A$. 
When $n=r$, we simply write $\mathbb{O}_n$ for the orthogonal group. For two real numbers $a, b$, we use $a \wedge b = \min(a, b)$ and $a \vee b = \max(a, b)$.  
Finally, let $c, C, C_1, \dots$ denote unspecified positive constants whose values may change from line to line. We write $a\lesssim b$, $b\gtrsim a$ or $a=O(b)$ if $a\le C b$ for a universal constant $C>0$ and $a\asymp b$ if $a\lesssim b$ and $b\lesssim a$ hold simultaneously. Additionally, we use $\tilde{O}(\cdot)$ to denote order notation ignoring logarithmic factors.

\section{Preliminaries for JIVE}
\subsection{The JIVE Decomposition}
We consider the multi-view setting where data are collected as a collection of $K$ matrices $\{\A_{k}\}_{k=1}^{K}$ on a common set of $n$ units. We assume the data admit the following JIVE decomposition:
\begin{align}\label{eq:jive-model}
    	\A_k = \U\V_k^\top + \U_k\W_k^\top + \E_k\in\RR^{n\times d_k},\qquad k\in[K].
\end{align}
Here $\U\in\OO_{n,r}$ and $\U_k\in \OO_{n,r_k}$ are  the joint and individual subspaces with $\U^\top\U_k =0$ for all $k\in[K]$  following \cite{lock2013joint,yang2025estimating}. This orthogonality constraint entails no loss of generality, as any decomposition in \eqref{eq:jive-model} can be equivalently reparameterized to enforce orthogonality. The matrices $\V_{k}\in\mathbb{R}^{d_{k}\times r}$ and $\W_{k}\in\mathbb{R}^{d_{k}\times r_{k}}$ denote the view-specific loadings for the joint and individual components, respectively. We study both deterministic and random loading models. 
The random model allows us to isolate the identifiability difficulty arising from the overlap of the individual subspaces $\{\U_k\}_{k=1}^K$. Finally, we assume that $\E_1,\ldots,\E_K$ are independent across views, where $\E_k\in\RR^{n\times d_k}$ represents the additive noise containing i.i.d. $N(0,\sigma_k^2)$ entries and $\sigma_k$'s are heteroskedastic noise levels across views. 

\subsection{Angle-Based Estimation}
To estimate the joint subspace $\U$, we start from the angle-based JIVE (AJIVE) framework proposed by \cite{feng2018angle}. The procedure operates in two stages:
\begin{enumerate}
    \item \textbf{(Signal Extraction)} For each view $k$, perform an SVD on $\A_k$ and retain the top $r+r_k$ left singular vectors, denoted as $\tilde{\U}_k$. 
    \item \textbf{(Subspace Aggregation)} Estimate the joint subspace $\U$ by aggregating the projection matrices of these local subspaces as  $\sum_{k=1}^K\tilde{\U}_k\tilde{\U}_k^\top$.
\end{enumerate}
The original AJIVE algorithm \citep{feng2018angle} and recent theoretical analyses \citep{yang2025estimating} use equal-weight aggregation. We generalize the idea by defining the weighted AJIVE estimator $\hat\U(\w)$ as the top $r$ eigenvectors of $\sum_{k=1}^{K}w_k\tilde\U_k\tilde\U_k^{\top}$ for any fix nonnegative weight vector $\w=(w_1,\ldots,w_K)$ satisfying $\sum_{k=1}^K w_k=1$. We write its output simply as $\hat\U$ when the weight vector is clear. This estimator family, summarized in Algorithm~\ref{alg:weighted-ajive}, contains standard AJIVE ($w_k=1/K$) as a special case. The proposed \hjive{} estimator is the member obtained with the explicit weights $\w^\circ$ developed in \Cref{sec:weights}, and \Cref{alg:data-driven-hjive} gives its plug-in implementation.

A seemingly natural alternative is to stack the data and directly compute the top $r$ eigenvectors of the pooled covariance $\sum_{k=1}^{K} w_k \A_k \A_k^\top$. However, this approach assumes that individual components are uniformly weak compared to the joint component. When this assumption is violated, directions dominated by strong individual variation can be mistakenly identified as joint (see Example \ref{exmp:intro} or the discussion in \cite{yang2025estimating}). AJIVE circumvents this by decoupling subspace orientation from signal magnitude. By stripping singular values to normalize subspace scales, the algorithm aggregates via principal angles rather than variances.
This ensures the joint subspace estimation is robust to heterogeneous signal strengths (see Algorithm \ref{alg:weighted-ajive}).

\begin{algorithm}
	\caption{Weighted AJIVE with Fix Weights}
	\begin{algorithmic}
		\State{\textbf{Input:} $\{\A_k\}_{k=1}^K$, $r, \{r_k\}_{k=1}^K$, $\{w_k\}_{k=1}^K$}
		\For{$k = 1,\cdots,K$}
		\State{Let $\tilde\U_k$ be the top $r+r_k$ left singular vectors of $\A_k$.}
		\EndFor
            \State{Let $\hat \U(\w)$ be the top $r$ eigenvectors of $\sum_{k=1}^Kw_k\tilde\U_k\tilde\U_k^\top$.}
            \State{\textbf{Output:} $\hat\U(\w)$}
	\end{algorithmic}
	\label{alg:weighted-ajive}
\end{algorithm}

\section{\texorpdfstring{Error Bounds for Equal and General Weights}{Error Bounds for Equal and General Weights}}\label{sec:theory}
A primary challenge in deriving a sharp estimation error bound for $\hat \U$ stems from the algorithm's reliance on two nested SVD steps. Consequently, the final estimator $\hat \U$ is a highly nonlinear functional of the data, constructed from $\tilde \U_k$ which are themselves nonlinear. To rigorously control this compounded error, we apply the explicit formula for spectral projector developed in \cite{xia2021normal}.

\subsection{\texorpdfstring{Setup and Key Quantities}{Setup and Key Quantities}}
For each $k\in[K]$, we denote $\bar r_k:=r+r_k$ and set $\bar r:=\max_{k\in[K]}\bar r_k$. We also write $r_{\rm avg}:=K^{-1}\sum_{k=1}^K r_k$ for the average individual rank. The signal strength $\lambda_{k,\min}:= \lambda_{\bar r_k}(\U\V_k^\top + \U_k\W_k^\top)$ is the $\bar r_k$-th largest singular value.
In addition, we  define the condition number and the signal-to-noise ratio (SNR) for $k$-th view  as $\kappa_k:= \lambda_{k,\min}^{-1}\op{\U\V_k^\top + \U_k\W_k^\top}$ and  $\snr_k:= \lambda_{k,\min}/\sigma_k$, respectively.

Recall from single-view PCA that the unilateral subspace $\bar\U_k=\mat{\U&\U_k}$ has estimation error in operator norm of order \citep{cai2018rate}:
\begin{align}\label{eq:def-uk}
    \op{\tilde\U_k\tilde\U_k^\top - \bar\U_k\bar\U_k^\top}\lesssim \snr_k^{-1}\sqrt{n}+\snr_k^{-2}\sqrt{n d_k}=: \varepsilon_k. 
\end{align}
For the weighted AJIVE estimator, the joint subspace error will appear as a polynomial in $\{\varepsilon_k\}_{k=1}^K$. 
Within each view, leakage from individual to joint is governed by the angle between column spaces spanned by $\V_k$ and $\W_k$, measured by 
\begin{align}\label{deltak}
    \delta_k := \op{(\V_k^\top\V_k)^{-1/2}\V_k^\top \W_k(\W_k^\top\W_k)^{-1/2}}.
\end{align}
Notice that, when $\V_k$ and $\W_k$ have full column rank, $\delta_k<1$ is equivalent to $\lambda_{k,\min}>0$; this condition ensures $\text{span}(\U)\subset \cap_{k=1}^K\text{span}(\U\V_k^\top + \U_k\W_k^\top)$ for identifiability, which is called the faithfulness assumption in \cite{yang2025estimating}.
We will see how $\delta_k$ influences the final estimation.

When $\V_k$ and $\W_k$ are treated as random, the observed features can be viewed as realizations from a population driven by latent structures. This modeling choice treats the orientations of the view-specific projections as non-adversarial, thereby avoiding systematic sign alignment across views.
For this random scenario, we impose the following conditional random-loading assumption:
\begin{assumption}\label{assump:VW:random}
Let
$\mathcal H:=\sigma\bigl(\U,\{\U_j,\W_j\}_{j=1}^K\bigr)$.
Conditional on $\mathcal H$, the loading matrices $\V_1,\ldots,\V_K$ are independent across views, and for each $k\in[K]$,
\[
(\V_k\mid \mathcal H)\stackrel{d}{=}(-\V_k\mid \mathcal H).
\]
Moreover, for each $k\in[K]$, there exists $\lambda_{k,\min}>0$ such that, with probability exceeding $1-p_k$,
the smallest singular value of $\mat{\V_k& \W_k}$ is lower bounded by $\lambda_{k,\min}$.
\end{assumption}

This assumption holds for a broad class of symmetric distributions, such as i.i.d. symmetric sub-Gaussian matrices, where random matrix theory guarantees sufficient signal strength via the lower bound on the smallest singular value. 

\subsection{Equal Weights: Deterministic Loadings}\label{subsec:equal-weights}
We begin with the equal weights case. Because the joint subspace is present in every population signal space, identifiability is governed by the extent to which the individual subspaces overlap across views.
The misalignment is measured by 
\begin{align}\label{def:theta:equal}
    \theta:= 1 - \op{\frac{1}{K}\sum_{k=1}^K \U_k\U_k^\top}. 
\end{align}
In general, $\theta\in[0,1]$. If at least one individual rank is positive, then $\theta\leq1-1/K$; if all individual ranks are zero, then $\theta=1$. The quantity $\theta$ quantifies the difficulty recovering $\U$ with the presence of individual components. When $\theta =0$, 
 $\bigcap_{k=1}^K\text{span}(\U_k)$ is non-empty and the joint subspace $\text{span}(\U)$ is not identifiable. When all individual ranks are positive, the endpoint $\theta=1-1/K$ is attained when the individual subspaces $\ebrac{\U_k}_{k=1}^K$ are mutually orthogonal. Intuitively, separating joint from individual subspaces becomes harder as $\theta$ approaches $0$.

For the equal-weight case, we consider a homogeneous setting in which all views \(\A_k\) have comparable signal-to-noise ratios and dimensions. By \textit{homogeneous}, we mean that the \(\snr_k\) across the views are comparable, denoted as \(\snr_k \asymp \snr\), and that the second dimensions of each matrix \(\A_k\) satisfy \(d_k \asymp d\), for all \(k\).  As a result, we have $\varepsilon_k\asymp \varepsilon$, where $\varepsilon: = n^{1/2}\snr^{-1} +(nd)^{1/2}\snr^{-2}$.
However, we make no assumption on the scale of $\delta_k$.
The following result offers a natural baseline for our algorithm in the equal-weight case with \(w_k = 1/K\). 
\begin{theorem}\label{thm:equal-weight-deterministic}
Assume, for a sufficiently small absolute constant $c_0>0$ and a sufficiently large absolute constant $C_0>0$,
\begin{align*}
    \kappa_k\snr^{-1}n^{1/2}+\snr^{-2}(nd)^{1/2}\le  c_0, \quad \forall k\in[K],
    \qquad
    \theta \ge C_0\brac{\varepsilon^2+\sqrt{\frac{\log n}{K}}\varepsilon}.
\end{align*} Then we have with probability exceeding $1-O\bbrac{Ke^{-n\wedge d}}-O\bbrac{n^{-10}}$, 
the output of Algorithm \ref{alg:weighted-ajive} with the choice $w_k = 1/K$ for $k\in[K]$ satisfies
\begin{align*}
    \op{\hat\U\hat\U^\top - \U\U^\top}\lesssim  \varepsilon\sqrt{\frac{\bar r\log n}{K}(n^{-1}\theta^{-1}r_{\rm avg}+1)}+\varepsilon^2\theta^{-1}\bigg(\sqrt{\frac{\log n}{K}}+\frac{\sum_{k=1}^{K}\big(\delta_k(1-\delta_k^2)^{-1}\wedge 1\big)}{K} \bigg),
\end{align*}
where $r_{\rm avg}$ is defined above.
\end{theorem}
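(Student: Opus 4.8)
The plan is a two-level perturbation argument. Write $\tilde\P_k:=\tilde\U_k\tilde\U_k^\top$, $\P_k:=\bar\U_k\bar\U_k^\top$ with $\bar\U_k:=[\U\ \U_k]$, and $\P:=\U\U^\top$. At the outer level $\hat\U$ is the rank-$r$ eigenspace of $\M:=\frac1K\sum_k\tilde\P_k$, whose population version is $\bar\M:=\frac1K\sum_k\P_k=\P+\frac1K\sum_k\U_k\U_k^\top$ (using $\U^\top\U_k=0$); hence $\P$ is \emph{exactly} the rank-$r$ eigenspace of $\bar\M$ at eigenvalue $1$, its complement carries eigenvalue at most $1-\theta$ by \eqref{def:theta:equal}, and the spectral gap equals $\theta$. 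Let $\bDelta:=\M-\bar\M=\frac1K\sum_k(\tilde\P_k-\P_k)$ and let $\T:=\P^\perp(\I-\bar\M_\perp)^{-1}\P^\perp$ be the reduced resolvent of $\bar\M$ on $\mathrm{span}(\U)^\perp$, where $\bar\M_\perp:=\frac1K\sum_k\U_k\U_k^\top$; note $\op{\T}\le\theta^{-1}$ while $\T$ acts on $\mathrm{span}(\U)^\perp$ by multiplication by a factor in $[1,\theta^{-1}]$. The first step shows that once $\op{\bDelta}\le c\theta$, the eigenprojector perturbation series gives
\begin{align*}
\op{\hat\U\hat\U^\top-\U\U^\top}\ \lesssim\ \op{\T\bDelta\P}\,;
\end{align*}
the point is that each surviving order-$m$ term factors as $(\T\bDelta\P)$ times a remainder of operator norm $\lesssim(\op{\bDelta}/\theta)^{m-1}$---every such term carries at least one off-diagonal block $\P^\perp\bDelta\P$ because the all-$\P$ products around eigenvalue $1$ integrate to zero---so the series sums to $O(\op{\T\bDelta\P})$.

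Step two verifies $\op{\bDelta}\le c\theta$. By the spectral representation of \cite{xia2021normal}, $\tilde\P_k-\P_k=\calS_{k,1}(\E_k)+\sum_{s\ge2}\calS_{k,s}(\E_k)$ with $\calS_{k,1}$ linear, hence mean zero. The averaged linear term $\frac1K\sum_k\calS_{k,1}(\E_k)$ is a mean of independent centered matrices of operator norm $\lesssim\varepsilon$, so matrix Bernstein bounds it by $\lesssim\varepsilon\sqrt{\log n/K}$, while $\op{\frac1K\sum_k\sum_{s\ge2}\calS_{k,s}(\E_k)}\le\frac1K\sum_k\op{\sum_{s\ge2}\calS_{k,s}(\E_k)}\lesssim\varepsilon^2$. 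The per-view bounds, and convergence of each inner Neumann series, use $\op{\E_k}\lesssim(\sqrt n+\sqrt{d_k})\sigma_k$, the single-view rate \eqref{eq:def-uk}, and the assumption $\kappa_k\snr^{-1}n^{1/2}+\snr^{-2}(nd)^{1/2}\le c$. Thus $\op{\bDelta}\lesssim\varepsilon^2+\varepsilon\sqrt{\log n/K}$, and the hypothesis $\theta\ge c(\varepsilon^2+\sqrt{\log n/K}\,\varepsilon)$ yields $\op{\bDelta}\le c\theta$, on an event of probability $1-O(Ke^{-n\wedge d})$.

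The crux is to bound $\op{\T\bDelta\P}$, which I do through the decomposition $\bDelta\P=\frac1K\sum_k\EE[\tilde\P_k-\P_k]\,\P+\frac1K\sum_k\big(\tilde\P_k-\P_k-\EE[\tilde\P_k-\P_k]\big)\,\P$ into bias and centered fluctuation. For the fluctuation the linear term dominates: $\P^\perp\calS_{k,1}(\E_k)\P=\P_k^\perp\E_k\N_k$ with $\N_k:=\bar\V_k\bLa_k^{-1}(\bar\U_k^\top\U)\U^\top$, writing $\U\V_k^\top+\U_k\W_k^\top=\bar\U_k\bLa_k\bar\V_k^\top$; applying matrix Bernstein to $\frac1K\sum_k\T\P_k^\perp\E_k\N_k$---keeping $\T$ inside the concentration rather than replacing it by $\theta^{-1}\I$, and splitting $\T$ into a ``bulk'' part on which $\bar\M_\perp$ has eigenvalues $O(n^{-1}r_{\textsf{avg}})$ (resolvent $O(1)$) and an ``edge'' part separated only by $\theta$---produces the leading summand $\varepsilon\sqrt{\frac{\bar r\log n}{K}(n^{-1}\theta^{-1}r_{\textsf{avg}}+1)}$; the centered quadratic-and-higher fluctuations, bounded by $\theta^{-1}$ times an average of centered matrices of norm $\lesssim\varepsilon^2$, contribute $\varepsilon^2\theta^{-1}\sqrt{\log n/K}$. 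For the bias I invoke the key geometric lemma: $\EE[\tilde\P_k-\P_k]$, obtained from the Gaussian second-order term of \cite{xia2021normal}, is a function of the signal SVD $(\bar\U_k,\bLa_k,\bar\V_k)$; when $\V_k\perp\W_k$, i.e.\ $\delta_k=0$ in \eqref{deltak}, the signal $\U\V_k^\top+\U_k\W_k^\top$ is the direct sum of two uncoupled rank-$r$ and rank-$r_k$ SVDs, so $\bar\U_k=[\U P_1\ \ \U_k P_2]$ for orthogonal $P_1,P_2$, the restriction of $\EE[\tilde\P_k-\P_k]$ to $\mathrm{span}(\bar\U_k)$ has the block-diagonal form $\U(\cdot)\U^\top+\U_k(\cdot)\U_k^\top$, and therefore $\P^\perp\EE[\tilde\P_k-\P_k]\P=0$---this is exactly how the ``non-diminishing'' bias of \cite{yang2025estimating} cancels. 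For general loadings the same computation shows the joint/individual cross block is proportional to the coupling, giving $\op{\P^\perp\EE[\tilde\P_k-\P_k]\P}\lesssim\bbrac{\delta_k(1-\delta_k^2)^{-1}\wedge1}\varepsilon_k^2$, the $\wedge1$ being the crude bound $\op{\P^\perp\EE[\tilde\P_k-\P_k]\P}\le\op{\EE[\tilde\P_k-\P_k]}\lesssim\varepsilon_k^2$; using $\op{\T}\le\theta^{-1}$ and averaging over $k$ gives the last summand $\varepsilon^2\theta^{-1}K^{-1}\sum_k(\delta_k(1-\delta_k^2)^{-1}\wedge1)$.

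I expect the geometric lemma to be the main obstacle: one must carry out the second-order Gaussian perturbation of the coupled SVD $\bar\U_k\bLa_k\bar\V_k^\top+\E_k$ and isolate the joint/individual cross block of the mean projector change precisely enough to expose its $\delta_k$-scaling---and its exact vanishing at $\delta_k=0$. A secondary technical point is the bulk/edge split of the reduced resolvent needed to sharpen a crude $\theta^{-2}$ into the factor $n^{-1}\theta^{-1}r_{\textsf{avg}}+1$; the remaining estimates are routine matrix-Bernstein bookkeeping.
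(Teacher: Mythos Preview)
Your overall architecture matches the paper's: apply the spectral-projector expansion of \cite{xia2021normal} at both levels, verify $\op{\bDelta}\le c\theta$ so the outer series collapses to $\op{\U^\top\bDelta\U_\perp\bLa^{-1}}$ (your $\op{\T\bDelta\P}$), and split this into bias plus fluctuation. The paper takes a slightly different route at the two places you flag as obstacles, and the differences are worth noting.

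For the bias, the paper does not compute the second-order Gaussian term directly. Instead it first decomposes $\U^\top\bDelta_k\U_\perp\bLa^{-1}$ along $(\bar\U_k,\bar\U_{k\perp})$: the off-signal piece $\U^\top\bar\U_k\,\bar\U_k^\top\bDelta_k\bar\U_{k\perp}\,\bar\U_{k\perp}^\top\U_\perp\bLa^{-1}$ has mean zero by an exact parity argument in $\H_k:=\bar\U_{k\perp}^\top\E_k$, and the entire bias lives in the low-rank on-signal piece $\bar\U_k^\top\bDelta_k\bar\U_k$. The paper then shows, via equivariance under conjugation by $\OO_{\bar r_k}$ and Procesi's invariant theory, that $\EE[\bar\U_k^\top\bDelta_k\bar\U_k\cdot 1(\calE_k)]=\sum_{r\ge1}c_{k,r}\bGa_k^{-r}$ with $\sum_r|c_{k,r}|\lambda_{k,\min}^{-2r}\lesssim\varepsilon_k^2$; since $\U^\top\bar\U_k=[\I\ 0]$ and $\bar\U_k^\top\U_\perp=[0;\U_k^\top\U_\perp]$, only the $(1,2)$-block of $\bGa_k^{-r}$ survives, and a short Schur-complement recursion gives $\op{[\bGa_k^{-r}]_{12}}\le r\lambda_{k,\min}^{-2r}\delta_k(1-\delta_k^2)^{-1}$. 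This handles all orders uniformly, whereas your geometric lemma as stated addresses only the second-order contribution; higher-order bias terms are smaller but must still be accounted for, and the invariant-theory route is what makes this clean.

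For the first-order fluctuation, your ``bulk/edge'' split of $\T$ by the spectrum of $\bar\M_\perp$ is not quite the right mechanism: the eigenvalues of $\bar\M_\perp$ are not generically $O(n^{-1}r_{\textsf{avg}})$ for deterministic $\U_k$. The paper instead bounds the variance proxy by $\tr(\M_k)+\bar r_k\op{\M_k}$ with $\M_k=\bar\U_{k\perp}^\top\U_\perp\bLa^{-2}\U_\perp^\top\bar\U_{k\perp}$, and the sharpening from $\theta^{-2}$ to $n^{-1}\theta^{-1}r_{\textsf{avg}}+1$ comes from the trace identity $K^{-1}\sum_k\tr(\M_k)=\tr(\bLa^{-1})$ together with $\tr(\bLa^{-1})\le(n-r)+r_{\textsf{avg}}\theta^{-1}$ (using $\tr(\I-\bLa)=r_{\textsf{avg}}$ and $\lambda_j^{-1}\le 1+(1-\lambda_j)\theta^{-1}$). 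This is a trace computation over the aggregate, not a per-view spectral split.
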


\paragraph{Connection with \cite{yang2025estimating}.}
For comparison, we first introduce the main results in \cite[Theorem 1]{yang2025estimating}.
For simplicity and to facilitate a clearer comparison, we focus on the setting where $n \asymp d$ and $r, r_k \asymp 1$.
In particular, they consider exact homogeneous noise case where $\sigma_k=\sigma$ for all $k$.
Under this setting, the estimation error is a polynomial of $\varepsilon \asymp \sqrt{n}\snr^{-1}$. 
They establish the following bound for $\bop{\hat\U\hat\U^\top - \U\U^\top}$:
\begin{align}\label{eq:yang-bound}
\varepsilon\sqrt{\frac{1}{K} + \frac{1}{Kn\theta}} \log^{5/2}n + \varepsilon^{2}\bigg(\frac{1}{\theta}+\frac{1}{K\theta^2}\bigg)\log^{5/2}n
\end{align}
Moreover, they established the minimax lower bound of order
\begin{align}\label{lower-bound}
     \varepsilon\sqrt{\frac{1}{K} + \frac{1}{Kn\theta}}.
\end{align}
In contrast, \Cref{thm:equal-weight-deterministic} provides the following bound for $\bop{\hat\U\hat\U^\top - \U\U^\top}$ under the same assumption on SNR and $\theta$:  
\begin{align}\label{eq:our-bound}
   \underbrace{\varepsilon \sqrt{\frac{1}{K} + \frac{1}{Kn\theta}} \log^{1/2} n}_{\textrm{first-order}} \hspace{0.1cm}+\hspace{0.1cm}\underbrace{\varepsilon^2\sqrt{\frac{\log n}{K\theta^2}} \hspace{0.1cm}+\hspace{0.1cm}\varepsilon^2\cdot \frac{\sum_{k=1}^{K}\big(\delta_k(1-\delta_k^2)^{-1}\wedge 1\big)}{K\theta}}_{\textrm{second-order}}.
\end{align}
In light of \eqref{eq:yang-bound} and \eqref{eq:our-bound}, our result offers a sharper upper bound.
For the first-order term, both results match the minimax lower bound \eqref{lower-bound} up to logarithmic factors, with our bound achieving a slightly tighter logarithmic dependence.
More importantly, the improvement lies in the second-order term.
Their result has a term $\theta^{-1}\log^{5/2} n\cdot \varepsilon^2$ that does not vanish with $K$, as their analysis does not explicitly track the loading-angle parameter between $\V_k$ and $\W_k$. In contrast, our bound incorporates a specific dependence on $\delta_k$, which can be viewed as a more refined characterization of the high-order terms. In \Cref{sec:rate-comparison} we further verify that, these terms are no larger than the corresponding terms in \eqref{eq:yang-bound} and have smaller logarithmic factors  under the same perturbative conditions.
A special case of particular interest occurs when $\delta_k$ are small (e.g., the orthogonal case when $\V_k^\top\W_k = 0$), such that $\sum_{k=1}^{K} \delta_k(1-\delta_k^2)^{-1}\lesssim K^{1/2}$. In this case, the second-order term reduces to $(K\theta^2)^{-1/2}\log^{1/2}n\cdot \varepsilon^2$, which vanishes as $K$ goes to infinity.
Finally, in the supplementary material we provide the algebraic comparison in the general $n,d$ setting, where additional cross-terms make a direct visual comparison less transparent.

\subsection{\texorpdfstring{A Lower Bound for AJIVE}{A Lower Bound for Equal-Weight AJIVE}}\label{subsec:rank-one-limitation}

The preceding comparison shows that the deterministic-loading upper bound contains a second-order term depending on the joint-individual loading angles $\{\delta_k\}_{k=1}^K$. Unlike the first-order term, this component need not average out over views when the deterministic loading interactions are aligned in sign. This raises a natural question: 

\emph{Is the non-vanishing second-order term merely an artifact of the proof, or is it intrinsic to AJIVE itself?}

We answer this question by proving a complementary algorithm-specific lower bound. 
To fix ideas, we consider a rank-one homogeneous setting: $\A_k=\A_k^*+\E_k$ with $$\A_k^*=\u\v_k^\top+\u_k\w_k^\top$$ for $k\in[K],$ where $\u$ is the joint direction, $\u_k$ is the $k$-th individual direction, $\op{\u}=\op{\u_k}=1$, and $\u^\top\u_k=0$. The entries of $\E_k$ are independent $N(0,\sigma^2)$ variables, all signal quantities are deterministic, equal weights are used throughout, and {we use the signal-strength notation $\lambda_{k,\min}$ from the general setup.} In this lower-bound subsection, we impose the exact homogeneous normalization $\lambda_{k,\min}=\lambda$ for all $k\in[K]$. Hence $\snr_k=\lambda/\sigma=:\snr$. In addition, we assume $d_k\asymp n$. Under this homogeneous scaling, the single-view perturbation size satisfies $\varepsilon_k\asymp\varepsilon$, where $\varepsilon:=\sqrt n\,\sigma/\lambda$.

We impose two simple deterministic conditions: one controls the loading angle and scale within each view, and the other requires majority alignment of the loading interactions across views.

\begin{assumption}[Loading angle and scale]\label{ass:lb-loading}
There exist fixed constants $\delta\in(0,1)$ and $C\geq1$ such that, for every $k\in[K]$,
\begin{equation*}
    \delta_k:=\frac{\ab{\v_k^\top\w_k}}{\op{\v_k}\op{\w_k}}=\delta,
    \qquad
    C^{-1}\leq \frac{\op{\v_k}}{\op{\w_k}}\leq C.
\end{equation*}
\end{assumption}
Assumption~\ref{ass:lb-loading} fixes the loading angle $\delta$ at a positive constant.
{As reflected in the second-order term of \Cref{thm:equal-weight-deterministic}, this permits a geometry contribution of order $(n\sigma^2\lambda^{-2})=\varepsilon^2$.} Assumption~\ref{ass:lb-majority}
below is what prevents this term from cancelling across views.

\begin{assumption}[Majority sign alignment]\label{ass:lb-majority}
Let $\z\in \u^\perp$ be a unit top eigenvector of $K^{-1}\sum_{k=1}^{K}\u_k\u_k^\top$:
\begin{equation*}
    \frac1K\sum_{k=1}^{K}\u_k\u_k^\top\z=(1-\theta)\z,
    \qquad
    \op{\z}=1.
\end{equation*}
Orient each pair $(\u_k,\w_k)$ so that $a_k:=\inp{\u_k}{\z}\geq0$. Define
\begin{equation*}
    s_k:=\operatorname{sign}(\v_k^\top\w_k),
    \qquad
    m_k:=\frac{a_k}{\op{\v_k}\op{\w_k}}.
\end{equation*}
There exist $\tau\in\{-1,1\}$ and $\gamma\in(0,1/2]$ such that
\begin{equation}
    \sum_{k:\tau s_k=1}m_k
    \geq
    \brac{\frac12+\gamma}\sum_{k=1}^K m_k.
    \label{eq:lb-majority}
\end{equation}
\end{assumption}
Assumption~\ref{ass:lb-majority} says that, after orienting the individual directions toward the dominant nuisance direction $\z$, a strict majority of the effective confounding mass has the same loading sign. 
Thus, it measures how strongly view $k$ can reinforce the aligned second-order bias. If the $m_k$'s are roughly comparable, the assumption reduces to the simple statement that more than half of the views share the same sign of $\v_k^\top\w_k$.

Recall that $\tilde\U_k$ denotes the rank-two left singular subspace estimator from $\A_k$, and let $\hat\u$ be the top eigenvector of $K^{-1}\sum_{k=1}^K\tilde\U_k\tilde\U_k^\top$.
The resulting lower bound is as follows.
\begin{proposition}[Algorithmic lower bound of AJIVE under majority sign alignment]\label{prop:lb-ajive}
Suppose the rank-one homogeneous setting above holds, and suppose Assumptions \ref{ass:lb-loading}--\ref{ass:lb-majority} hold.
Assume further that $n$ is sufficiently large and that, for some fixed
$c_0\in(0,1/2)$, the following conditions hold:
\begin{equation}
    c_0\leq\theta\leq1-c_0,
    \qquad
    K\geq C_1\log n,
    \qquad
    \varepsilon\sqrt{\frac{\log n}{K}}+\varepsilon^2\leq c_1\theta,
    \qquad
    K\exp\{-c_2n\}+n^{-10}\leq c_1\varepsilon^2.
    \label{eq:lb-smallness}
\end{equation}
Here $c_2>0$ is universal, whereas $C_1>0$ is chosen sufficiently
large and $c_1>0$ sufficiently small, depending only on the fixed constants
$c_0,\delta,\gamma$ and $C$ in Assumptions~\ref{ass:lb-loading}--\ref{ass:lb-majority}.
Then there exists a constant $c>0$, depending only on the fixed constants $c_0,\delta,\gamma$ and $C$, such that
\begin{equation*}
    \EE\op{\hat\u\hat\u^\top-\u\u^\top}
    \geq c\varepsilon^2.
\end{equation*}
\end{proposition}
The assumptions in Proposition~\ref{prop:lb-ajive} deliberately focus on a simple homogeneous regime with fixed $\delta$ and $c_0 \le \theta \le 1-c_0$. 
Fixing $\delta$ away from zero excludes the trivial case where the joint-individual loading interaction disappears, while keeping $\theta$ bounded away from zero places the problem in a stable identifiable regime. Therefore, the lower bound is not caused by a vanishing loading angle or by deteriorating identifiability. Rather, Proposition~\ref{prop:lb-ajive} shows that, even in this favorable setting, the bias of the expected AJIVE output projection has a non-vanishing lower bound of order $\varepsilon^2$. This indicates that the non-vanishing second-order term is generated by the equal-weight AJIVE map on the deterministic subclass satisfying \Cref{ass:lb-majority}, rather than being an artifact of our analysis. 

\paragraph{Relation to the oracle-aided lower bound of \cite{yang2025estimating}.}
This algorithmic lower bound is complementary to, and should be distinguished from, the oracle-aided lower-bound construction in \cite{yang2025estimating}. Their oracle spectral estimator uses side information about the target joint space when estimating the individual components, and their analysis shows that a non-diminishing second-order term can persist for that oracle-aided estimator. Proposition~\ref{prop:lb-ajive} addresses a different, algorithm-specific question: for the actual equal-weight AJIVE output, the expectation of the output projection itself can retain an $\varepsilon^2$-level bias under a majority sign-alignment condition in rank-one setting. Thus the two lower-bound results support the same qualitative message from different angles.

\subsection{\texorpdfstring{Equal Weights: Random Loadings}{Equal Weights: Sign-Symmetric Random Loadings}}\label{subsec:equal-weight-random}

The preceding subsection isolates a deterministic regime with majority sign alignment in which the second-order bias can align across views. We now consider the complementary random-design case, which yields a vanishing bound in terms of $K$.

\begin{theorem}\label{thm:equal-weight-random}
Suppose Assumption \ref{assump:VW:random} holds and, for a sufficiently small absolute constant $c_0>0$ and a sufficiently large absolute constant $C_0>0$,
\begin{align*}
    \kappa_k\snr^{-1}n^{1/2}+\snr^{-2}(nd)^{1/2}\le  c_0, \quad \forall k\in[K],
    \qquad
    \theta \ge C_0\brac{\varepsilon^2+\varepsilon\sqrt{\frac{\log n}{K}}}.
\end{align*} Then we have with probability exceeding $1-O\bbrac{Ke^{-n\wedge d}}-\sum_{k=1}^Kp_k-O\bbrac{n^{-10}}$, 
the output of Algorithm \ref{alg:weighted-ajive} with the choice $w_k = 1/K$ for $k\in[K]$ satisfies
\begin{align*}
    \op{\hat\U\hat\U^\top - \U\U^\top} 
    \lesssim \varepsilon\sqrt{\frac{\bar r \log n}{K}(n^{-1}\theta^{-1}r_{\rm avg}+1)}+\varepsilon^2\theta^{-1}\sqrt{\frac{\log n}{K}},
\end{align*}
where $r_{\rm avg}$ is defined above.
\end{theorem}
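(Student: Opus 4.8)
The plan is to follow the argument behind \Cref{thm:equal-weight-deterministic} essentially verbatim, departing from it only at the step that produces the $\delta_k$-dependent second-order term: under \Cref{assump:VW:random} that term ceases to be a persistent bias and concentrates down to order $\varepsilon^2\sqrt{\log n/K}$. Write $\M:=K^{-1}\sum_{k=1}^K\tilde\U_k\tilde\U_k^\top$ with population counterpart $\M_0:=\U\U^\top+K^{-1}\sum_{k=1}^K\U_k\U_k^\top$; since $\U^\top\U_k=0$ we have $\bar\U_k\bar\U_k^\top=\U\U^\top+\U_k\U_k^\top$, so the top-$r$ eigenspace of $\M_0$ is $\text{span}(\U)$ with eigengap $\asymp\theta$ by \eqref{def:theta:equal}. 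The \cite{xia2021normal} spectral-projector expansion applied to each first-stage SVD writes $\tilde\U_k\tilde\U_k^\top-\bar\U_k\bar\U_k^\top=\mathcal{L}_k(\E_k)+\mathcal{S}_{k,2}+(\text{terms cubic or higher in }\E_k)$, so that
\begin{align*}
\M-\M_0=\frac1K\sum_{k=1}^K\mathcal{L}_k(\E_k)+\frac1K\sum_{k=1}^K\B_k+\mathcal{R},\qquad \B_k:=\EE_{\E_k}[\mathcal{S}_{k,2}],
\end{align*}
where $\mathcal{L}_k$ is linear in $\E_k$ and $\mathcal{R}$ gathers the centered fluctuation of $\mathcal{S}_{k,2}$ together with all higher-order remainders. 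Feeding this into the \cite{xia2021normal} expansion of the outer eigenvector problem $\hat\U=\text{Eigen}_r(\M)$ makes $\hat\U\hat\U^\top-\U\U^\top$ a low-degree polynomial in $\M-\M_0$ with coefficients governed by powers of $\theta^{-1}$; under $\kappa_k\snr^{-1}n^{1/2}+\snr^{-2}(nd)^{1/2}\le c$ and $\theta\ge c(\varepsilon^2+\varepsilon\sqrt{\log n/K})$ one has $\op{\M-\M_0}\ll\theta$, which justifies truncating the expansion. The linear part $K^{-1}\sum_k\mathcal{L}_k(\E_k)$ is treated exactly as in \Cref{thm:equal-weight-deterministic}: splitting $\mathcal{L}_k(\E_k)$ into the block whose range meets $\text{span}(\U_k)$---the only one amplified by $\theta^{-1}$ under the eigenvector perturbation---and the generic block, matrix Bernstein over the $K$ independent noises yields the first displayed term $\varepsilon\sqrt{(\bar r\log n/K)(n^{-1}\theta^{-1}r_{\textsf{avg}}+1)}$, while the centered piece of $\mathcal{R}$ concentrates at rate $\varepsilon^2\theta^{-1}\sqrt{\log n/K}$ (the remaining, cubic-and-higher, pieces being smaller). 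It therefore remains to bound $\theta^{-1}\op{K^{-1}\sum_{k=1}^K\B_k}$ and show it does not exceed $\varepsilon^2\theta^{-1}\sqrt{\log n/K}$.

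This last bound is the only place where the random design enters. Unpacking the second-order formula of \cite{xia2021normal} for $\A_k=\U\V_k^\top+\U_k\W_k^\top+\E_k$, the part of $\B_k$ that survives projection onto the $\U$-versus-$\U_k$ off-diagonal block relevant to the aggregation is, up to lower-order pieces, of the form $\U\G_k\U_k^\top+\U_k\G_k^\top\U^\top$, where $\G_k$ is built from $(\V_k^\top\V_k)^{-1}\V_k^\top\W_k$, satisfies $\op{\G_k}\lesssim\varepsilon^2(\delta_k(1-\delta_k^2)^{-1}\wedge1)$ (matching the deterministic rate), and is an \emph{odd} function of $\V_k$ for fixed $\W_k$. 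The decisive observation is that replacing $\V_k$ by $-\V_k$ flips the sign of $\B_k$ while leaving everything else about view $k$ intact: $(\U\V_k^\top+\U_k\W_k^\top)^\top(\U\V_k^\top+\U_k\W_k^\top)=\V_k\V_k^\top+\W_k\W_k^\top$ is invariant, so $\bar\U_k$, $\U_k$, the signal singular values, $\lambda_{k,\min}$, $\snr_k$, $\varepsilon_k$ and $\delta_k$ are all unchanged, and $\E_k$ is independent of $\V_k$ with a symmetric law. Hence \Cref{assump:VW:random} gives $\EE[\B_k\mid\W_k]=0$. On the event of \Cref{assump:VW:random}, which holds with probability at least $1-\sum_{k=1}^Kp_k$, the $\B_k$ are thus conditionally independent, conditionally centered, and obey $\op{\B_k}\lesssim\varepsilon^2$, so a matrix Bernstein inequality applied conditionally on $\{\W_k\}_{k=1}^K$ yields $\op{K^{-1}\sum_{k=1}^K\B_k}\lesssim\varepsilon^2\sqrt{\log n/K}$. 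Dividing by the eigengap $\theta$ gives $\varepsilon^2\theta^{-1}\sqrt{\log n/K}$, the same order as the $\mathcal{R}$ contribution, so the second displayed term is $\varepsilon^2\theta^{-1}\sqrt{\log n/K}$ and the proof is complete.

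The main obstacle is the structural claim in the previous paragraph. One must extract from the (rather intricate) second-order spectral projector of view $k$ precisely the part that feeds the $\U$-versus-$\U_k$ off-diagonal block---the part encoding leakage of the individual component into the joint estimate---and verify that, once $\W_k$ and all noise-independent view-$k$ quantities are frozen, this part is genuinely odd in $\V_k$, so that the sign-symmetry of \Cref{assump:VW:random} bears on the bias and on nothing else; in particular one has to check that the even-in-$\V_k$ contributions to this block (e.g.\ those arising from $\EE[\E_k(\cdot)\E_k^\top]$-type terms) are block-diagonal and therefore do not appear there. A secondary, more routine point is that the conditional Bernstein step, the sub-exponential tail bounds for $\mathcal{L}_k(\E_k)$, and the uniform control of $\op{\B_k}$ and of the relevant variance proxies must all be carried out on the intersection of the event that the per-view bounds \eqref{eq:def-uk} hold for every $k$, of probability $1-O(Ke^{-n\wedge d})$, with the event of \Cref{assump:VW:random}; this intersection is what produces the stated failure probability $1-O(Ke^{-n\wedge d})-\sum_{k=1}^Kp_k$.
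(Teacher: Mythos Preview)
Your approach matches the paper's: sign-symmetry of $\V_k$ under \Cref{assump:VW:random} kills the mean of the bias sitting in the $\U$-versus-$\U_k$ off-diagonal block, after which a matrix concentration inequality delivers $\varepsilon^2\theta^{-1}\sqrt{\log n/K}$. There is, however, one real gap.

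You apply the oddness-in-$\V_k$ argument only to the second-order bias $\B_k:=\EE_{\E_k}[\mathcal{S}_{k,2}]$ and send everything cubic-and-higher into $\mathcal{R}$, asserting those pieces are ``smaller.'' But the higher-order terms also carry noise-expectations that do not shrink under averaging over $k$: the $l$-th order contribution to the relevant off-diagonal block has bias of size $O(\varepsilon^l\theta^{-1})$, so in particular the cubic bias is $\varepsilon^3\theta^{-1}$, and this is \emph{not} dominated by $\varepsilon^2\theta^{-1}\sqrt{\log n/K}$ unless $\varepsilon\lesssim\sqrt{\log n/K}$---a condition nowhere assumed. The paper closes this by showing (via an equivariance argument under the conjugation action of $\OO_{\bar r_k}$) that the \emph{full} noise-expectation of $\bar\U_k^\top\bDelta_k\bar\U_k$, summed over all orders $l\geq 2$, is a polynomial $\sum_{r\geq 1}c_{k,r}\bGa_k^{-r}$ with $\bGa_k=[\V_k\ \W_k]^\top[\V_k\ \W_k]$ and $\sum_{r\geq 1}|c_{k,r}|\lambda_{k,\min}^{-2r}\lesssim\varepsilon_k^2$, and then verifying that the $(1,2)$ block $[\bGa_k^{-r}]_{12}$ is odd in $\V_k$ for \emph{every} $r\geq 1$ (a parity count: with $f_{ij}:=[\bGa_k^{-1}]_{ij}$, the diagonal blocks $f_{11},f_{22}$ are even and $f_{12},f_{21}$ are odd in $\V_k$, and any product $f_{1l_1}\cdots f_{l_{r-1}2}$ contains an odd number of off-diagonal factors). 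Your oddness mechanism is correct, and your observation that $\V_k\mapsto-\V_k$ leaves $\bar\U_k$, the singular values, and the indicator of the good event invariant is exactly what makes it work; you simply need to apply it to the full bias---all orders at once---rather than only to the quadratic piece.
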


When $\theta$ is bounded away from zero and the ranks are constants, the bound in \Cref{thm:equal-weight-random} decreases at the $K^{-1/2}$ and essentially matches the minimax lower bound \eqref{lower-bound} established in \cite{yang2025estimating} up to  logarithmic factor. In this regime, the estimation error for the joint subspace is comparable to the scenario where no individual components exist. This suggests that the presence of individual components does not impose additional difficulty in recovering the joint subspace.
We now discuss a special case to illustrate this phenomenon.

\begin{proposition}\label{prop:random-Uk}
Under the setting of Theorem \ref{thm:equal-weight-random}, for each $k$, let $\U_k$ be an independent random matrix sampled uniformly from the set of orthogonal matrices of dimension $n\times r_k$ contained entirely within $\text{span}(\U)^{\perp}$. 
Also assume $\varepsilon_k\asymp \varepsilon$, $n\geq 4r_{\rm avg}+r$, $K\geq C\log n$. Then with the choice $w_k = 1/K$ for $k\in[K]$, we have with probability exceeding $1-O\bbrac{Ke^{-n\wedge d}}-\sum_{k=1}^Kp_k-O\bbrac{n^{-10}}$,
\begin{align*}
    \op{\hat\U\hat\U^\top - \U\U^\top} &\lesssim \varepsilon\sqrt{\frac{\bar r\log n}{K}}.
\end{align*}
\end{proposition}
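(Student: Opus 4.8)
The plan is to show that the specific random construction of $\{\U_k\}_{k=1}^K$ satisfies the hypotheses of Theorem \ref{thm:equal-weight-random} with the misalignment parameter $\theta$ bounded below by a universal constant (with high probability), so that the second-order term $\varepsilon^2\theta^{-1}\sqrt{(\log n)/K}$ collapses into $\varepsilon\sqrt{(\bar r\log n)/K}$ under the given SNR constraint, leaving only the first-order rate. Concretely, since each $\U_k$ lies in $\text{span}(\U)^\perp$, which has dimension $n-r$, and $r_k\le r_{\textsf{avg}}$-type bounds hold, the orthogonality condition $\U^\top\U_k=0$ is automatic, so faithfulness reduces to controlling $\theta = 1 - \|\tfrac1K\sum_k\U_k\U_k^\top\|$.

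The first step is to control $\theta$. I would work inside $\text{span}(\U)^\perp\cong\RR^{n-r}$ and view each $\U_k\U_k^\top$ as a random rank-$r_k$ projection. Its expectation is $\tfrac{r_k}{n-r}\I_{n-r}$ (by rotational invariance of the Haar measure on the Stiefel manifold), so $\EE[\tfrac1K\sum_k\U_k\U_k^\top] = \tfrac{r_{\textsf{avg}}}{n-r}\I_{n-r}$, which has operator norm $\tfrac{r_{\textsf{avg}}}{n-r}\le \tfrac14$ under the assumption $n\ge 4r_{\textsf{avg}}+r$. Then I would apply a matrix concentration inequality — matrix Bernstein or, since the summands are i.i.d. with norm at most $1/K$, a matrix Chernoff bound — to show that with probability at least $1-n^{-10}$ (this is where $K\ge C\log n$ is consumed, to beat the union bound via the $e^{-cK}$-type tail against the ambient dimension $n$),
\begin{align*}
    \bop{\frac1K\sum_{k=1}^K\U_k\U_k^\top} \le \frac{r_{\textsf{avg}}}{n-r} + C\sqrt{\frac{r_{\textsf{avg}}\log n}{(n-r)K}} + C\frac{\log n}{K} \le \frac12,
\end{align*}
hence $\theta\ge 1/2$. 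One also needs to check the lower-bound condition $\theta\ge c(\varepsilon^2+\varepsilon\sqrt{(\log n)/K})$ from Theorem \ref{thm:equal-weight-random}, but this is immediate from $\theta\ge 1/2$ together with the standing assumption that $\varepsilon$ (equivalently the SNR quantity $\kappa_k\snr^{-1}n^{1/2}+\snr^{-2}(nd)^{1/2}$) is at most a small constant $c$.

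The second step is bookkeeping: invoke Theorem \ref{thm:equal-weight-random} (its SNR hypothesis is exactly the standing assumption carried over from that theorem's setting, and Assumption \ref{assump:VW:random} is assumed there) on the event $\{\theta\ge 1/2\}$, giving
\begin{align*}
    \op{\hat\U\hat\U^\top - \U\U^\top} \lesssim \varepsilon\sqrt{\frac{\bar r\log n}{K}\Big(\frac{r_{\textsf{avg}}}{n\theta}+1\Big)} + \frac{\varepsilon^2}{\theta}\sqrt{\frac{\log n}{K}} \lesssim \varepsilon\sqrt{\frac{\bar r\log n}{K}} + \varepsilon^2\sqrt{\frac{\log n}{K}},
\end{align*}
using $r_{\textsf{avg}}/(n\theta)\lesssim 1$ (from $n\ge 4r_{\textsf{avg}}$ and $\theta\ge 1/2$) and $\theta\ge 1/2$. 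Finally, since $\varepsilon\le c<1$, the second-order term $\varepsilon^2\sqrt{(\log n)/K}$ is dominated by $\varepsilon\sqrt{(\log n)/K}\le \varepsilon\sqrt{(\bar r\log n)/K}$, yielding the claimed bound $\varepsilon\sqrt{(\bar r\log n)/K}$. The probabilities combine as $1-O(Ke^{-n\wedge d}) - \sum_k p_k - n^{-10}$, which is absorbed into $1-n^{-10}$ after adjusting constants (the term $Ke^{-n\wedge d}$ is negligible for $n,d$ large, and under this random-orthogonal construction one may take $p_k=0$ since $\mat{\V_k&\W_k}$'s smallest-singular-value requirement concerns only $\V_k,\W_k$, not $\U_k$).

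The main obstacle I anticipate is the concentration step for $\theta$: one must ensure the deviation term is genuinely smaller than the constant gap $\tfrac12-\tfrac{r_{\textsf{avg}}}{n-r}$, which forces the interplay $K\gtrsim\log n$ precisely so that both $\sqrt{r_{\textsf{avg}}\log n/((n-r)K)}$ and $\log n/K$ are controlled; care is also needed because the $r_k$ may differ across views, so the i.i.d. matrix-Chernoff form must be replaced by a matrix-Bernstein bound with variance proxy $\tfrac1{K^2}\sum_k\|\U_k\U_k^\top\|\cdot\|\EE[\ldots]\|$-type quantities. Everything else is a direct substitution into Theorem \ref{thm:equal-weight-random}.
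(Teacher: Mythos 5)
Your proposal is correct and follows essentially the same route as the paper: compute $\EE\,\U_k\U_k^\top = \tfrac{r_k}{n-r}\I$ on $\mathrm{span}(\U)^\perp$, apply matrix Bernstein to get $\bigl\|K^{-1}\sum_k\U_k\U_k^\top\bigr\|\le \tfrac{r_{\textsf{avg}}}{n-r}+C\sqrt{\tfrac{r_{\textsf{avg}}\log n}{(n-r)K}}+C\tfrac{\log n}{K}\le \tfrac12$ using $n\ge 4r_{\textsf{avg}}+r$ and $K\gtrsim\log n$, and then substitute $\theta\ge 1/2$ into Theorem \ref{thm:equal-weight-random}. Your additional bookkeeping on absorbing the second-order term via $\varepsilon\le c$ is correct and slightly more explicit than the paper's own write-up.
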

Thus, when the random individual subspaces keep the misalignment bounded away from zero, equal-weight AJIVE is sharp up to logarithmic factors and attains the same rate as if the individual components were absent.
\paragraph{Connection with \cite{tang2025mode}.}
Recently, \cite{tang2025mode} proposed a framework for a multi-view spiked covariance model, which can be viewed as a special case of JIVE model \eqref{eq:jive-model} with 
$\W$ also shared across views (implying a common individual rank across views and $d_k = d$).
While their data generating process and ours are different and neither strictly subsumes the other, we can compare the theoretical guarantees within the intersection of ours and theirs. In particular, we consider 
$$\A_k = \U\V_k^\top + \U_k\W^\top + \E_k.$$
Here, we assume $\ebrac{\U_k}_{k=1}^K$ are generated under the setting of Proposition \ref{prop:random-Uk}, and $\V_k$ are i.i.d.  such that Assumption \ref{assump:VW:random} is satisfied. We further consider $\sigma_k=\sigma$ for $k\in[K]$,  $n\asymp d$  and $r,r_{\rm avg}\asymp 1$ for simplicity.

In this context, the {MOP-UP} algorithm in \cite{tang2025mode} consists of two stages. The initialization stage is equivalent to \Cref{alg:weighted-ajive} with equal weights (i.e., AJIVE) and they obtain a bound of $O(\varepsilon^2)$. They subsequently refine this initial estimator via alternating projection to derive a final bound of order $(K^{-1}\log n)^{1/2}$, provided that $K \gtrsim \varepsilon^{-4}\log n$.
In comparison, our upper bound is of order $\varepsilon\cdot(K^{-1}\log n)^{1/2}$, requiring only $K \gtrsim \log n$. Thus, the condition on $K$ is milder; whenever their condition holds, our bound is also smaller than their initialization error. Moreover, unlike their refined bound, our bound vanishes as either $K\to \infty$ or $\snr \to \infty$ and matches the minimax lower bound up to logarithmic factors. This suggests that in this scenario iterative refinement may be theoretically redundant.

\subsection{\texorpdfstring{General Weights}{General Weights}}\label{subsec:general-weights}

The equal-weight choice analyzed in \Cref{subsec:equal-weights} yields a clean bound in the homogeneous regime. However, in many applications, the view dimensions $d_k$ and the signal-to-noise ratios $\snr_k$ may vary substantially across $k$, making equal weighting potentially inefficient. Moreover, equal weights need not be optimal even under homogeneity.

A key difference from the equal weight analysis is the notion of misalignment under general weights. For $\w=(w_1,\ldots,w_K)\in\Delta_K$ where $\Delta_K:=\big\{ \tilde\w \in \mathbb{R}^K : \tilde w_k \ge 0, \sum_{k=1}^K \tilde w_k = 1 \big\}$, we define
\begin{align*}
    \theta(\w):= 1- \op{\sum_{k=1}^{K} w_k\U_k\U_k^\top}. 
\end{align*}
The choice of $\w$ directly influences the alignment level of the individual subspaces, offering flexibility to increase $\theta(\w)$ by down-weighting closely aligned views. In the following, we will drop the dependence on $\w$ in the subscript when the context is clear. The following example highlights the importance of weight choice.
\begin{example}\label{exmp:hetero}
Let $r_k = 1$ for all $k\in[K]$, and $\u_1 = \cdots = \u_{K-1} = \v$, $\u_{K}$ is orthogonal to $\v$. Then with equal weights, we have $\theta(\w) = K^{-1}$. However, if we choose $w_1 =w_K= \frac{1}{2}$ and $w_2 = \cdots = w_{K-1} =0$, $\theta(\w) = \frac{1}{2}$. 
\end{example}

We  denote the eigen-decomposition of $\I - \sum_{k=1}^{K} w_k \bar\U_k\bar\U_k^\top = \U_{\perp}\bLa\U_{\perp}^\top$ and also recall $\varepsilon_k = n^{1/2}\lambda_{k,\min}^{-1}\sigma_k +(nd_k)^{1/2}\lambda_{k,\min}^{-2}\sigma_k^2$.
The following theorem quantifies how reweighting interacts with $\theta$ and can improve on equal weights.
\begin{theorem}\label{thm:adaptive-weights-deterministic}
Fix $\w\in\Delta_K$. 
Assume
    \begin{align}\label{theta:deterministic}
            &\kappa_k\snr_k^{-1}n^{1/2} + \snr_k^{-2}(nd_k)^{1/2}\le c_0, \quad\forall k\in[K], \notag\\
        &\theta\ge C_0\cdot\max\bigg\{\sum_{k=1}^{K} w_k\varepsilon_k^2, \bigg(\sum_{k=1}^{K}w_k^2\varepsilon_k^2\log n\bigg)^{1/2}\bigg\},
    \end{align}
for a sufficiently small absolute constant $c_0>0$ and a sufficiently large absolute constant $C_0>0$. Then we have with probability exceeding $1-O\bbrac{\sum_{k=1}^Ke^{-n\wedge d_k}}-O\bbrac{n^{-10}}$, 
\begin{align*}
    \op{\hat\U\hat\U^\top - \U\U^\top} &\lesssim \sqrt{\sum_{k=1}^{K}n^{-1}w_k^2 \big(\tr(\M_k)+\bar r_k\op{\M_k}\big)\varepsilon_k^{2}}\sqrt{\log (n)}\\
&\quad +\sqrt{\sum_{k=1}^{K}w_k^2\varepsilon_k^{4}\theta^{-2}}\sqrt{\log n}+\sum_{k=1}^{K} w_k\big(\delta_k(1-\delta_k^2)^{-1}\wedge 1\big)\varepsilon_k^2\theta^{-1},
\end{align*}
where $\M_k:= \bar\U_{k\perp}^\top\U_{\perp}\bLa^{-2}\U_{\perp}^\top\bar\U_{k\perp}$.
\end{theorem}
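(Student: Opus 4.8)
\textbf{Proof proposal for Theorem \ref{thm:adaptive-weights-deterministic}.}

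The plan is to express $\hat\U\hat\U^\top - \U\U^\top$ through a perturbation expansion of the weighted aggregation matrix $\hat\M := \sum_{k=1}^K w_k \tilde\U_k\tilde\U_k^\top$ around its population analogue $\M^* := \sum_{k=1}^K w_k \bar\U_k\bar\U_k^\top$, and then carefully decompose the deviation $\hat\M - \M^*$ into pieces of increasing order in $\{\varepsilon_k\}$. First I would note that $\U$ is exactly the leading-$r$ eigenspace of $\M^*$ with eigenvalue $1$, separated from the remaining spectrum by a gap of $\theta$ (since the eigenvalues of $\I - \M^*$ restricted to $\mathrm{span}(\U)^\perp$ are the diagonal of $\bLa$, and by definition $\op{\sum_k w_k\U_k\U_k^\top}=1-\theta$; the cross terms vanish because $\U^\top\U_k=0$). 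So a Davis--Kahan / $\sin\Theta$ argument gives $\op{\hat\U\hat\U^\top-\U\U^\top}\lesssim \theta^{-1}\op{\P_{\U}(\hat\M-\M^*)\P_{\U^\perp}} + \text{(higher order)}$, but to get the sharp rate I would instead use the explicit spectral-projector formula of \cite{xia2021normal} applied to $\hat\M$, which yields $\hat\U\hat\U^\top-\U\U^\top = \mathfrak{L}(\hat\M-\M^*) + \sum_{j\ge 2}\mathfrak{S}_j(\hat\M-\M^*)$ where $\mathfrak{L}$ is the linear term and $\mathfrak{S}_j$ are explicit multilinear remainders controlled by $\theta^{-j}\op{\hat\M-\M^*}^j$.

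The core of the work is then to substitute, for each $k$, the single-view projector expansion $\tilde\U_k\tilde\U_k^\top - \bar\U_k\bar\U_k^\top = \calL_k(\E_k) + \calR_k$ — again via \cite{xia2021normal} applied to $\A_k = \bar\U_k \bar\bDelta_k$ + noise — where $\calL_k(\E_k)$ is linear in the Gaussian noise $\E_k$ (explicitly of the form $\bar\U_{k\perp}\bar\U_{k\perp}^\top \E_k \bar\V_k(\cdot)\bar\U_k^\top + \text{transpose}$, with $\bar\V_k$ the right singular vectors and the middle factor involving inverse singular values) and $\op{\calR_k}\lesssim \varepsilon_k^2$. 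Plugging this into $\hat\M-\M^*$ and then into $\mathfrak{L}$, the dominant contribution is $\sum_k w_k \,\mathfrak{L}(\calL_k(\E_k))$, which is a sum of $K$ independent (across $k$) linear-in-Gaussian matrices; a matrix Bernstein / Gaussian concentration bound on its $\P_\U(\cdot)\P_{\U^\perp}$ block produces the first stated term $\sqrt{\log n\cdot\sum_k n^{-1}w_k^2(\tr(\M_k)+\bar r_k\op{\M_k})\varepsilon_k^2}$ — here $\M_k = \bar\U_{k\perp}^\top\U_\perp\bLa^{-2}\U_\perp^\top\bar\U_{k\perp}$ arises precisely as the covariance structure of that block, the $\bLa^{-2}$ coming from $\mathfrak{L}$'s action and the $\bar\U_{k\perp}$-sandwich from $\calL_k$. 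The key subtlety — and this is where the $\delta_k$ dependence enters and where the real improvement over \cite{yang2025estimating} lives — is the treatment of the \emph{deterministic} leakage in the higher-order terms: within view $k$, the second-order remainder $\calR_k$ has a component that projects individual-loading information onto $\mathrm{span}(\U)$, and the size of this component, after the $\mathfrak{L}$/$\mathfrak{S}_j$ machinery, is governed by the principal angle $\delta_k$ between $\mathrm{col}(\V_k)$ and $\mathrm{col}(\W_k)$; I would show the relevant operator norm is $\lesssim (\delta_k(1-\delta_k^2)^{-1}\wedge 1)\varepsilon_k^2$ by a direct block-matrix computation on $\bar\bDelta_k^{-1}$ (the inverse of the singular-value matrix of the structured signal), which is exactly the quantity appearing in the last term $\sum_k w_k(\delta_k(1-\delta_k^2)^{-1}\wedge 1)\varepsilon_k^2\theta^{-1}$. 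The remaining cross terms — products $\calL_k(\E_k)\cdot\calL_j(\E_j)$ for $k\ne j$, and $\mathfrak{S}_2$ applied to the linear parts — are handled by a second concentration step giving $\sqrt{\log n\cdot\sum_k w_k^2\varepsilon_k^4\theta^{-2}}$, using independence across views and the condition \eqref{theta:deterministic} to ensure $\theta^{-1}\op{\hat\M-\M^*}\lesssim 1$ so the Neumann-type series for $\mathfrak{S}_j$ converges.

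The main obstacle I anticipate is precisely the bookkeeping of the deterministic leakage term: one must track, through two layers of projector expansion, exactly which pieces of $\tilde\U_k\tilde\U_k^\top - \bar\U_k\bar\U_k^\top$ have nonzero overlap with $\mathrm{span}(\U)$ after aggregation. Naively every second-order term contributes $\varepsilon_k^2$, giving the non-vanishing $\theta^{-1}\varepsilon^2$ barrier of \cite{yang2025estimating}; the gain comes from observing that the \emph{noise-free} structured-signal geometry forces the $\mathrm{span}(\U)$-component of the leading second-order term to carry a factor $\delta_k$, and that the purely noise-driven second-order terms average out across $k$ to give the $\sqrt{\sum_k w_k^2\varepsilon_k^4}$ (rather than $\sum_k w_k\varepsilon_k^2$) scaling. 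Making this cancellation rigorous requires writing $\bar\bDelta_k$ in a basis adapted to the $\V_k$/$\W_k$ splitting and computing the $(\U,\U_k)$-block structure of $\bar\bDelta_k^{-1}$ and $\bar\bDelta_k^{-2}$ explicitly; I would isolate this as a standalone geometric lemma on structured two-block low-rank matrices so that the concentration arguments can then proceed term-by-term without re-deriving it. The condition \eqref{theta:deterministic} is used throughout to guarantee the two series converge and that $\varepsilon_k$ is below the single-view perturbation threshold required for \eqref{eq:def-uk} and the \cite{xia2021normal} expansion to be valid.
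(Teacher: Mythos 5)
Your proposal follows essentially the same route as the paper: a two-layer application of the spectral-projector expansion of \cite{xia2021normal} (outer layer on the weighted aggregate, inner layer on each view), with the first term obtained by concentrating the mean-zero off-diagonal blocks whose covariance is exactly $\M_k$, the middle term by centering and concentrating the second-order single-view contributions, and the $\delta_k$-term by a block-inverse computation on the signal Gram matrix $\bGa_k=\mat{\V_k&\W_k}^\top\mat{\V_k&\W_k}$, which is precisely the paper's standalone geometric lemma. The only step you leave implicit is why the conditional expectation of the diagonal block $\bar\U_k^\top\bDelta_k\bar\U_k$ is a power series in $\bGa_k^{-1}$ (the paper derives this from orthogonal equivariance of the expectation map), but this is a detail inside the same architecture rather than a different argument.
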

The condition in \Cref{thm:adaptive-weights-deterministic} depends both on the weighted alignment of the individual subspaces, through $\theta$, and on the view-specific perturbation levels $\{\varepsilon_k\}_{k=1}^K$. Hence, one can face regimes with small $\theta$ (well-aligned individual subspaces) but uneven view quality where some $\varepsilon_k$ are small and others are large. In such cases, equal weighting may fail the condition, as illustrated by \Cref{exmp:hetero}, while choosing proper weights can adjust $\theta$ to meet the condition \eqref{theta:deterministic} as well as improving the overall error bound.

The bound in \Cref{thm:adaptive-weights-deterministic} separates the two sources of heterogeneity. The term $\varepsilon_k$ captures the statistical heterogeneity induced by view-specific $\snr_k$ and $d_k$, while the matrix term $\M_k$ encapsulates the structural heterogeneity driven by the orientation of the individual subspace $\U_k$ relative to the null space $\U_\perp$. Unlike heterogeneous PCA, where optimal weights are often determined by statistical precision alone \citep{baharav2025stacked}, the exact minimizer here depends nonlinearly on the weights through $\theta(\w)$ and $\M_k(\w)$. Nevertheless, \Cref{sec:weights} shows that a simple geometry-free inverse-cost rule is constant-factor optimal under a constant-gap condition, and the geometry-adaptive iteration is only an optional refinement when that condition is doubtful.

Next, we consider the sign-symmetric random-loading setting of Assumption \ref{assump:VW:random}. 
\begin{theorem}\label{thm:adaptive-weights-random}
Fix $\w\in\Delta_K$.  Suppose Assumption \ref{assump:VW:random} holds, and
    \begin{align*}
           &\kappa_k\snr_k^{-1}n^{1/2} + \snr_k^{-2}(nd_k)^{1/2}\le c_0, \quad\forall k\in[K],\\
        &\theta\ge C_0\cdot\max\bigg\{\sum_{k=1}^{K} w_k\varepsilon_k^2, \bigg(\sum_{k=1}^{K}w_k^2\varepsilon_k^2\log n\bigg)^{1/2}\bigg\},
    \end{align*}
for a sufficiently small absolute constant $c_0>0$ and a sufficiently large absolute constant $C_0>0$. Then we have with probability exceeding $1-O\bbrac{\sum_{k=1}^Ke^{-n\wedge d_k}} - \sum_{k=1}^{K} p_k-O\bbrac{n^{-10}}$, 
\begin{align*}
    \op{\hat\U\hat\U^\top - \U\U^\top} &\lesssim\sqrt{\sum_{k=1}^{K}n^{-1}w_k^2 \big(\tr(\M_k)+\bar r_k\op{\M_k}\big)\varepsilon_k^{2}}\sqrt{\log n}+
\sqrt{\sum_{k=1}^{K}w_k^2\varepsilon_k^{4}\theta^{-2}}\sqrt{\log n}.
\end{align*}
\end{theorem}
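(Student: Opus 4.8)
The plan is to follow the perturbation analysis behind \Cref{thm:adaptive-weights-deterministic} and to isolate the one place where \Cref{assump:VW:random} sharpens it. Write $\M:=\sum_{k=1}^{K}w_k\tilde\U_k\tilde\U_k^\top$ and $\M^{*}:=\sum_{k=1}^{K}w_k\bar\U_k\bar\U_k^\top=\U\U^\top+\sum_{k=1}^{K}w_k\U_k\U_k^\top$, where the second identity uses $\U^\top\U_k=0$ and $\sum_k w_k=1$; the leading-$r$ eigenspace of $\M^{*}$ is exactly $\text{span}(\U)$, with spectral gap $\theta$, while $\hat\U$ is the leading-$r$ eigenspace of $\M$. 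First I would expand $\hat\U\hat\U^\top-\U\U^\top$ in two nested layers: an outer expansion of the eigenprojector of $\M$ around that of $\M^{*}$, valid once $\op{\M-\M^{*}}\le\theta/2$ (guaranteed by the lower bound on $\theta$), and, inside it, the spectral-projector expansion of \cite{xia2021normal} applied to each view, $\tilde\U_k\tilde\U_k^\top-\bar\U_k\bar\U_k^\top=\sum_{j\ge1}\calS_{\A_k,j}(\E_k)$, with $\calS_{\A_k,1}$ linear in $\E_k$. Grouping terms by total degree in the noises produces (i) a term linear in $\{\E_k\}$, namely $\calL\bbrac{\sum_k w_k\calS_{\A_k,1}(\E_k)}$ for the gap-weighted linear operator $\calL$ associated with $\M^{*}$; (ii) the quadratic-in-noise terms; and (iii) all cubic-and-higher remainders together with the second-order remainder of the outer expansion.

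Steps (i) and (iii) are carried out exactly as in the proof of \Cref{thm:adaptive-weights-deterministic}, and do not use randomness of the loadings. For (i), independence of the $\E_k$ and Gaussianity make $\calL\bbrac{\sum_k w_k\calS_{\A_k,1}(\E_k)}$ a linear functional of the independent Gaussian noises whose variance is governed by $\sum_k n^{-1}w_k^2\bbrac{\tr(\M_k)+\bar r_k\op{\M_k}}\varepsilon_k^2$ with $\M_k=\bar\U_{k\perp}^\top\U_{\perp}\bLa^{-2}\U_{\perp}^\top\bar\U_{k\perp}$; a standard net argument over $\hat\U$ then yields the first displayed term, with a $\sqrt{\log n}$ factor. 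For (iii), the condition $\kappa_k\snr_k^{-1}n^{1/2}+\snr_k^{-2}(nd_k)^{1/2}\le c$ forces $\varepsilon_k\le c$, so combined with the lower bound on $\theta$ every term of total noise degree $\ge3$ and the outer second-order remainder is dominated by the two displayed quantities.

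The new ingredient lies entirely in (ii). The quadratic part splits into a mean-zero fluctuation — controlled by the same concentration estimate as in the deterministic proof, of order $\sqrt{\sum_k w_k^2\varepsilon_k^4\theta^{-2}\log n}$ — and a per-view bias block $\B_k:=\U_{\perp}^\top\,\EE_{\E_k}\sqbrac{\calS_{\A_k,2}(\E_k)}\,\U$, whose aggregate enters the final error as $\theta^{-1}\bop{\sum_k w_k\B_k}$; in the deterministic theorem one bounds $\op{\B_k}\lesssim\bbrac{\delta_k(1-\delta_k^2)^{-1}\wedge1}\varepsilon_k^2$, which is precisely what produces the third displayed term there. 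The key observation is that $\B_k$ is \emph{odd} in $\V_k$ with $\W_k$ held fixed. Indeed, letting $\R_k:=\I-2\U\U^\top$, we have $\R_k\bbrac{\U\V_k^\top+\U_k\W_k^\top}=-\U\V_k^\top+\U_k\W_k^\top$ and $\R_k\E_k\overset{d}{=}\E_k$, so flipping $\V_k\mapsto-\V_k$ replaces the view-$k$ projector perturbation by its conjugate under $\R_k$; since $\R_k\U=-\U$ while $\R_k$ fixes $\U_{\perp}$, the block $\U_{\perp}^\top(\cdot)\U$ flips sign, i.e.\ $\B_k(-\V_k,\W_k)=-\B_k(\V_k,\W_k)$. (The singular-value lower bound in \Cref{assump:VW:random} is invariant under $\V_k\mapsto-\V_k$, so restricting to its high-probability event does not break this symmetry.) Hence \Cref{assump:VW:random} gives $\EE[\B_k\mid\W_k]=0$. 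Since the $\B_k$ are independent across $k$ and bounded by $C\bbrac{\delta_k(1-\delta_k^2)^{-1}\wedge1}\varepsilon_k^2\le C\varepsilon_k^2$, a matrix Bernstein inequality applied conditionally on $\{\W_k\}_{k=1}^{K}$ yields $\bop{\sum_k w_k\B_k}\lesssim\sqrt{\sum_k w_k^2\varepsilon_k^4\log n}$, so that $\theta^{-1}\bop{\sum_k w_k\B_k}$ is absorbed into the second displayed term. Combining (i)--(iii) and a union bound over the $O\bbrac{\sum_k e^{-n\wedge d_k}}$ noise events and the $\sum_k p_k$ signal events completes the proof.

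I expect the main obstacle to be making the odd-in-$\V_k$ argument fully rigorous at the level of $\calS_{\A_k,2}$: one must check that every piece of the two nested expansions that survives into the bias block $\U_{\perp}^\top(\cdot)\U$ genuinely transforms by conjugation with $\R_k$ — this needs care because $\bar\U_k$ already contains $\U$, so $\R_k\bar\U_k$ equals $\bar\U_k$ with its $\U$-columns negated rather than $\bar\U_k$ itself — and one must establish the quantitative bound $\op{\B_k}\lesssim\bbrac{\delta_k(1-\delta_k^2)^{-1}\wedge1}\varepsilon_k^2$ by writing $\EE_{\E_k}[\calS_{\A_k,2}(\E_k)]$ explicitly through the SVD of $\U\V_k^\top+\U_k\W_k^\top$ and tracking how the $\V_k$--$\W_k$ angle enters the pseudo-inverse factors. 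A secondary point is ensuring the net and union-bound steps keep a $\sqrt{\log n}$ rather than $\log n$ factor, since that is precisely what makes the bound sharper than \eqref{eq:yang-bound}.
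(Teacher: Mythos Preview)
Your overall architecture matches the paper's: both reduce to the general bound of \Cref{thm:main} (equivalently \Cref{thm:adaptive-weights-deterministic}), and the only task is to show that the residual bias term
\[
\op{\sum_{k=1}^{K}w_k\sum_{r\geq 1}c_{k,r}\,\U^\top\bar\U_k\bGa_k^{-r}\bar\U_k^\top\U_{\perp}\bLa^{-1}}
\]
is absorbed into $\sqrt{\sum_k w_k^2\varepsilon_k^4\theta^{-2}\log n}$ once the loadings satisfy \Cref{assump:VW:random}. The paper does this by an \emph{algebraic} parity argument: writing $[\bGa_k^{-1}]_{12}$ via the Schur complement, it checks that the off-diagonal blocks of $\bGa_k^{-1}$ are odd in $\V_k$ while the diagonal blocks are even, so every product forming $[\bGa_k^{-r}]_{12}$ is odd and hence $\EE\big[[\bGa_k^{-r}]_{12}\cdot 1(\calF_k)\big]=0$; matrix Hoeffding then gives the stated concentration. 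Your reflection argument with $\R_k=\I-2\U\U^\top$ is a genuinely different and more geometric route to the same conclusion: since $\R_k(\U\V_k^\top+\U_k\W_k^\top)=-\U\V_k^\top+\U_k\W_k^\top$ and $\R_k\E_k\overset{d}{=}\E_k$, one gets $\EE_{\E_k}[\bDelta_k](-\V_k,\W_k)=\R_k\,\EE_{\E_k}[\bDelta_k](\V_k,\W_k)\,\R_k$, and sandwiching by $\U_\perp^\top(\cdot)\U$ flips the sign. This is cleaner in that it avoids the block algebra entirely and works directly at the level of projectors; the paper's approach, in exchange, makes the $\delta_k$-dependence of each $[\bGa_k^{-r}]_{12}$ explicit, which is what feeds into the deterministic \Cref{thm:adaptive-weights-deterministic}.

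There is, however, a real gap in your step (iii). You restrict the symmetry argument to the second-order piece $\B_k=\U_\perp^\top\EE_{\E_k}[\calS_{\A_k,2}(\E_k)]\U$ and assert that all noise-degree $\ge3$ contributions are ``dominated by the two displayed quantities'' using only $\varepsilon_k\le c$. This is false for the even-order biases: $\EE_{\E_k}[\calS_{\A_k,2j}]$ is nonzero for $j\ge2$, and the resulting contribution to $\theta^{-1}\bop{\sum_k w_k\B_k^{(2j)}}$ is of order $\theta^{-1}\sum_k w_k\varepsilon_k^{2j}$, which under the stated assumptions is only guaranteed to be $O(1)$, not bounded by $\sqrt{\sum_k w_k^2\varepsilon_k^4\theta^{-2}\log n}$. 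The paper avoids this precisely by \emph{not} separating orders: the coefficients $c_{k,r}$ in \Cref{thm:main} already aggregate all noise orders $\ge2$, and the parity argument is applied to the full sum. The fix in your framework is immediate: your conjugation-by-$\R_k$ argument actually establishes oddness of the \emph{entire} block $\U_\perp^\top\EE_{\E_k}[\bDelta_k]\U$, not just its second-order part, so you should define $\B_k$ as the full noise-expectation and apply matrix Hoeffding (or Bernstein) to that. With this correction the argument goes through.
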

As with the equal-weight case, conditional sign symmetry centers the deterministic loading-interaction term in \Cref{thm:adaptive-weights-deterministic}, leaving only the corresponding root-sum-of-squares fluctuation term. 

\section{\texorpdfstring{\hjive: Weight Selection and Data-Driven Implementation}{HeteroJIVE: Weight Selection and Data-Driven Implementation}}\label{sec:weights}

The bounds in \Cref{sec:theory} characterize the weighted AJIVE estimator for any fixed weight vector $\w=(w_1,\ldots,w_K)\in\Delta_K$, but they do not by themselves prescribe how the weights should be selected. We now complete the definition of \hjive{} by specifying an explicit inverse-cost rule. When these candidate weights preserve a constant identifiability gap, the rule is constant-factor optimal relative to the unrestricted oracle criterion. We use it as the default choice and then provide a data-driven plug-in implementation; when the estimated gap is small, we also describe an optional extension of the data-driven procedure.

\subsection{\texorpdfstring{Explicit Weights and Optimality}{HeteroJIVE Weights and Constant-Factor Optimality}}\label{subsec:explicit-weights}

We focus on the random-loading setting of \Cref{thm:adaptive-weights-random}, for which
the deterministic bias term involving $\{\delta_k\}_{k=1}^K$ is absent. Let $\mathcal D:=\{\w\in\Delta_K:\theta(\w)>0\}$. For $\w\in\mathcal D$, define
\begin{align}\label{eq:oracle-criterion}
J(\w)
:=\sum_{k=1}^K w_k^2
\big\{q_k(\w)\varepsilon_k^2+\theta(\w)^{-2}\varepsilon_k^4\big\},
\end{align}
where
$q_k(\w):=n^{-1}\{\tr(\M_k(\w))+\bar r_k\op{\M_k(\w)}\}$.
We set $J(\w):=+\infty$ on $\Delta_K\setminus\mathcal D$ and write
$J^\ast:=\inf_{\w\in\Delta_K}J(\w)$. $J(\w)^{1/2}$ is equivalent to the sum of the two nonnegative terms in the bound of \Cref{thm:adaptive-weights-random} up to logarithmic terms. Thus, minimizing $J$ directly targets the theorem's stochastic upper bound. The key observation is that  $J$ admits a quadratic
envelope:
\begin{align*}
\Phi(\w)\leq J(\w)\leq\theta(\w)^{-2}\Phi(\w),
\qquad \w\in\mathcal D,
\end{align*}
where 
$\Phi(\w):=\sum_{k=1}^K\xi_k w_k^2$ and $\xi_k:=\varepsilon_k^2(1+\varepsilon_k^2)$. Therefore, the minimizer of $\Phi$ over the simplex admits the explicit inverse-cost form:
\begin{align}\label{eq:constant-gap-explicit-weight}
w_k^\circ
:=\frac{\xi_k^{-1}}{\sum_{j=1}^K\xi_j^{-1}},
\qquad k\in[K].
\end{align}
It worths noting that \eqref{eq:oracle-criterion} is tailored to the random-loading bound and need not be sharp for every deterministic loading configuration. However, the inverse-cost rule \eqref{eq:constant-gap-explicit-weight} remains a natural choice for the fluctuation terms if the signed second-order interactions partially or completely cancel, e.g., when \Cref{ass:lb-majority} fails in the rank-one setting.
\begin{proposition}
\label{prop:constant-gap-explicit-weight}
Assume $\varepsilon_k>0$ for all $k\in[K]$. {If $\theta(\w^\circ)>0$, then} ${J(\w^\circ)
\leq \theta(\w^\circ)^{-2}J^\ast.}$
{In particular, if $\theta(\w^\circ)\geq c_0$ for some absolute constant $c_0\in(0,1)$, then $J(\w^\circ)\leq c_0^{-2}J^\ast$, so $\w^\circ$ attains the same order as the unrestricted oracle infimum.}
\end{proposition}

The proof is deferred to \Cref{pf-prop:constant-gap-explicit-weight}. The fact that $\w^\circ$ depends only on $\{\varepsilon_k\}_{k=1}^K$ does not impose structural homogeneity. Rather, when $\theta(\w^\circ)$ is bounded away from zero, the geometry-dependent factors $q_k(\w^\circ)$ and $\theta(\w^\circ)^{-2}$ are uniformly bounded, so structural heterogeneity changes the criterion only by a constant factor and statistical heterogeneity determines the explicit rule. The constant-gap condition
$\theta(\w^\circ)\geq c_0$ is satisfied, for example, when the individual subspaces are sufficiently dispersed and the candidate weights do not
concentrate on a nearly aligned collection of views. It mainly excludes near-identifiability
settings in which a common individual direction receives almost all of the candidate weight.

Accordingly, we define \hjive{} as the weighted AJIVE estimator in Algorithm~\ref{alg:weighted-ajive} with the explicit weights $\w^\circ$ in \eqref{eq:constant-gap-explicit-weight}.

\begin{corollary}[No individual components]\label{cor:no-individual-exact-weight}
If $r_k=0$ for every $k\in[K]$ (so there are no individual components), then $\theta(\w)=1$ and $q_k(\w)=1$ for $\w\in\Delta_K$. Hence $\w^\circ$ defined in
\eqref{eq:constant-gap-explicit-weight} is the exact minimizer of $J$.
\end{corollary}
In the regime $\max_k\varepsilon_k=o(1)$, the exact no-individual-component weight
satisfies
$w_k^\circ
={\varepsilon_k^{-2}}\bbrac{\sum_{j=1}^K\varepsilon_j^{-2}}^{-1}\sqbrac{1+o(1)}$. This agrees to first order with the asymptotically optimal SVD-Stack weights of
\cite{baharav2025stacked} in the rank-one proportional regime $d_k/n\asymp1$. Their
result is asymptotic in that regime, whereas our error bounds for a fixed weight vector are
non-asymptotic and also allow $d_k$ to be much larger than $n$.

\subsection{One-shot \hjive}\label{subsec:one-shot-weights}

The explicit rule in \eqref{eq:constant-gap-explicit-weight} only requires estimates of the
single-view perturbation levels $\ebrac{\varepsilon_k}_{k=1}^K$. Since $\lambda_{k,\min}$
is the smallest nonzero singular value of the combined joint--individual signal in view
$k$, the quantities entering $\varepsilon_k$ can be estimated separately within each view.

For each $k\in[K]$, recall $\bar r_k=r+r_k$, let $\tilde\U_k$ be the top $\bar r_k$
left singular vectors of $\A_k$, and define the local rank-$\bar r_k$ approximation $\tilde\A_k:=\tilde\U_k\tilde\U_k^\top\A_k$. We estimate the smallest signal singular value and the noise level locally by
\begin{align*}
\hat\lambda_{k,\min}
&:=\lambda_{\bar r_k}\big(\tilde\A_k\big),
&
\hat\sigma_k
&:=\frac{\fro{\A_k-\tilde\A_k}}
{\sqrt{nd_k-\bar r_k(n+d_k-\bar r_k)}}.
\end{align*}
The corresponding local SNR and plug-in perturbation level are
\begin{align*}
\widehat{\snr}_{k}
&:={\hat\lambda_{k,\min}}/
{\hat\sigma_k},
&
\hat\varepsilon_k
&:=\sqrt{n}\widehat{\snr}_{k}^{-1}
+\sqrt{nd_k}\widehat{\snr}_{k}^{-2}.
\end{align*}
This construction estimates the quality of the combined signal space
$\operatorname{span}(\U,\U_k)$ and does not require separating its joint and individual
parts. The one-shot plug-in weights are then
\begin{align}\label{eq:plugin-explicit-weight}
\hat\xi_k
:=\hat\varepsilon_k^2(1+\hat\varepsilon_k^2),
\qquad
\hat w_k^\circ
:=\frac{\hat\xi_k^{-1}}{\sum_{j=1}^K\hat\xi_j^{-1}},
\qquad k\in[K].
\end{align}
All quantities used to form $\hat w_k^\circ$ are computed from $\A_k$ alone and can be
estimated in parallel across views. The local singular vectors $\tilde\U_k$ are then reused
in the weighted aggregation step. The resulting data-driven procedure is summarized below.

\begin{algorithm}[H]

\caption{One-shot \hjive}
\begin{algorithmic}
\State{\textbf{Input:} $\{\A_k\}_{k=1}^K$, $r$, $\{r_k\}_{k=1}^K$}
\For{$k=1,\ldots,K$}
\State{Compute the top $\bar r_k=r+r_k$ left singular vectors $\tilde\U_k$ of $\A_k$.}
\State{Compute $\hat\varepsilon_k$ from the local SNR estimate above.}
\EndFor
\State{Compute $\hat\w^\circ$ from \eqref{eq:plugin-explicit-weight}.}
\State{Let $\hat\U^{\rm H}$ be the top $r$ eigenvectors of
$\sum_{k=1}^K\hat w_k^\circ\tilde\U_k\tilde\U_k^\top$.}
\State{\textbf{Output:} $\hat\U^{\rm H}$ and $\hat\w^\circ$}
\end{algorithmic}
\label{alg:data-driven-hjive}
\end{algorithm}

\subsection{\texorpdfstring{Optional Refinement of One-shot \hjive{}}{Optional Extension of the Data-Driven HeteroJIVE Procedure}}\label{subsec:optional-refinement}

The one-shot \hjive{} is the default whenever its estimated
identifiability gap is bounded away from zero. The geometry needed to evaluate this gap is
constructed only after the local-SNR plug-in weights have been obtained. In particular, set
\begin{align*}
    \hat\U^{(0)}
    :=\hat\U(\hat\w^\circ)
    =\text{Eigen}_r\left(\sum_{k=1}^K
    \hat w_k^\circ\tilde\U_k\tilde\U_k^\top\right),
\end{align*}
which is exactly the data-driven \hjive{} estimate from
Algorithm~\ref{alg:data-driven-hjive}, rather than an equal-weight AJIVE estimate. For
$k\in[K]$, let $\hat\U_k^{(0)}$ be the top $r_k$ left singular vectors of $\big(\I-\hat\U^{(0)}\hat\U^{(0)\top}\big)\A_k$,
and define $\hat{\bar\U}_k^{(0)}:=\mat{\hat\U^{(0)}&\hat\U_k^{(0)}}$. For any $\w\in\Delta_K$, set $\hat\theta(\w)
:=1-\op{\sum_{k=1}^K w_k
\hat\U_k^{(0)}\hat\U_k^{(0)\top}}$. Whenever $\hat\theta(\w)>0$, let $\I-\sum_{j=1}^K w_j
\hat{\bar\U}_j^{(0)}\hat{\bar\U}_j^{(0)\top}
=\widehat{\U}_{\perp}(\w)\widehat{\bLa}(\w)
\widehat{\U}_{\perp}(\w)^\top$
be its rank-$(n-r)$ nonzero eigendecomposition. If
$\hat{\bar\U}_{k\perp}^{(0)}$ is an orthonormal basis for the orthogonal complement of
$\hat{\bar\U}_k^{(0)}$, define
\begin{align*}
\widehat{\M}_k(\w)
:=\hat{\bar\U}_{k\perp}^{(0)\top}
\widehat{\U}_{\perp}(\w)\widehat{\bLa}(\w)^{-2}
\widehat{\U}_{\perp}(\w)^\top
\hat{\bar\U}_{k\perp}^{(0)}.
\end{align*}
Thus, both the initial joint estimate and the estimated geometry are based on the
local-SNR plug-in weights $\hat\w^\circ$.
If the empirical gap $\hat\theta(\hat\w^\circ)$ is small, the variation of
$\theta(\w)$ and $\M_k(\w)$ may affect the oracle criterion beyond a constant factor. In
that case, one may optionally refine the explicit weights by keeping the estimated subspaces fixed
and evaluating the geometry-dependent costs at the current iterate. 

Starting from
$\w^{(0)}=\hat\w^\circ$, and provided the current estimated gap is positive, use
\begin{align*}
w_k^{(t+1)}
&:=\frac{\big[\hat c_k\{\w^{(t)}\}\big]^{-1}}
{\sum_{j=1}^K\big[\hat c_j\{\w^{(t)}\}\big]^{-1}},\\
\hat c_k(\w)
&:=\hat\varepsilon_k^4\hat\theta(\w)^{-2}
+n^{-1}\hat\varepsilon_k^2
\left[\tr\{\widehat{\M}_k(\w)\}+\bar r_k\op{\widehat{\M}_k(\w)}\right],
\qquad k\in[K].
\end{align*}
If an update produces a nonpositive estimated gap, the refinement is stopped at the last
well-defined iterate. After a prescribed number of updates, the last weight vector is used
in Algorithm~\ref{alg:weighted-ajive}; when no refinement is performed, the output
remains $\hat\U^{(0)}=\hat\U^{\rm H}$.

This geometry-adaptive frozen-cost heuristic is only an optional extension of the
data-driven \hjive{} procedure, rather than part of its default implementation. We do not
claim that the iteration converges or globally minimizes $J$. For completeness,
\Cref{app:oracle-refinement} records the oracle version of the iteration and shows that any
sufficiently regular interior fixed point has a small projected gradient in the perturbative
regime.

\section{Numerical Experiments}

We consider the following reparameterization of the JIVE model:
\begin{align*}
    \A_k
    =s_k\big(\U\V_k^\top+\gamma\U_k\W_k^\top\big)+\E_k,
    \qquad k\in[K],
\end{align*}
where the entries of $\E_k$ are independent $N(0,\sigma_k^2)$ random
variables.  Here $s_k$ controls the overall signal strength in view $k$,
while $\gamma$ controls the signal strength of the individual component
relative to the joint component.  Throughout this section, we set
$s=s_1=\cdots=s_K$, $r=r_1=\cdots=r_K$, and
$d=d_1=\cdots=d_K$, with $d>2r$.

To generate $\U$ and $\{\U_k\}_{k=1}^K$, we use a construction similar to
that of \citet{yang2025estimating}.  We first draw the joint subspace
$\U$ randomly from $\OO_{n,r}$.  We then set
\begin{align*}
    \U_k=\sqrt{1-\theta}\,\Z+\sqrt{\theta}\,\Z_k,
    \qquad k\in[K],
\end{align*}
where $\Z\in\RR^{n\times r}$ is a random orthonormal matrix in
$\operatorname{span}(\I_n-\U\U^\top)$ and
$\Z_k\in\RR^{n\times r}$ is a random orthonormal matrix in
$\operatorname{span}(\I_n-\U\U^\top-\Z\Z^\top)$.  For the loading
matrices $\V_k$ and $\W_k$, we consider the following four schemes:
\begin{itemize}
    \item \emph{Random scheme.}
    Draw $\V_k$ and $\W_k$ independently from $\OO_{d,r}$ for each
    $k\in[K]$.

    \item \emph{Shared scheme.}
    Draw $\V$ and $\W$ independently from $\OO_{d,r}$ and set
    $\V_k=\V$ and $\W_k=\W$ for every $k\in[K]$.

    \item \emph{Shared-orthogonal scheme.}
    Draw $\Q=[\q_1,\ldots,\q_d]$ from $\OO_d$ and set
    $\V_k=[\q_1,\ldots,\q_r]$ and
    $\W_k=[\q_{r+1},\ldots,\q_{2r}]$ for every $k\in[K]$.

    \item \emph{Random-orthogonal scheme.}
    Independently draw
    $\Q_k=[\q_{k,1},\ldots,\q_{k,d}]$ from $\OO_d$ and set
    $\V_k=[\q_{k,1},\ldots,\q_{k,r}]$ and
    $\W_k=[\q_{k,r+1},\ldots,\q_{k,2r}]$ for each $k\in[K]$.
\end{itemize}

Unless stated otherwise, performance is measured by the operator-norm
distance between the estimated and true joint projection matrices $\big\|\hat\U\hat\U^\top-\U\U^\top\big\|$.
We evaluate $\sqrt K\in\{2,5,10,15,20,30,40,50\}$.  For every configuration and
every value of $K$, we use $100$ independent Monte Carlo replications.
The plotted curves are the replicate means, and the error bars extend
two Monte Carlo standard errors on either side of the mean.

\subsection{Homogeneous Setting}

We first consider a homogeneous setting with identical noise levels
across views.  We set $s=1$, $\sigma_1=\cdots=\sigma_K=0.1$, $r=2$, and
$n=d=20$.  We use AJIVE, namely weighted AJIVE with weights $1/K$, to
isolate the effect of loading geometry.

The left panel of \Cref{fig:homo-1} compares AJIVE under the four loading
schemes with $(\theta,\gamma)=(0.5,0.5)$.  The shared scheme behaves
qualitatively differently from the other three schemes.  Its average
error decreases from approximately $0.31$ at $\sqrt K=2$ to $0.086$ at
$\sqrt K=10$, but then remains around $0.065$--$0.076$ over the larger
values of $K$.  In contrast, under the random, shared-orthogonal, and
random-orthogonal schemes, the error continues to decrease at an
approximately $K^{-1/2}$ rate.  At $\sqrt K=50$, their average errors
range from $0.0116$ to $0.0138$.  Thus, the non-vanishing low-SNR
behavior is associated with the specific configuration in which the
loadings are shared and non-orthogonal; either randomness across views
or orthogonality between joint and individual loadings removes this
persistent error in the present simulations.

The right panel of \Cref{fig:homo-1} compares AJIVE with Stack-SVD under
the random loading scheme and $(\theta,\gamma)=(0.5,1.5)$.  Stack-SVD
horizontally concatenates the views and applies PCA to the concatenated
matrix.  AJIVE's average error decreases from $0.353$ at $\sqrt K=2$ to
$0.0152$ at $\sqrt K=50$.  By contrast, the Stack-SVD error remains
close to one throughout the grid.  This separation is consistent with
Stack-SVD's inability to disentangle joint and individual variation
when the individual signal is relatively strong.

\begin{figure}[!tbp]
    \centering
    \includegraphics[width=0.47\textwidth]{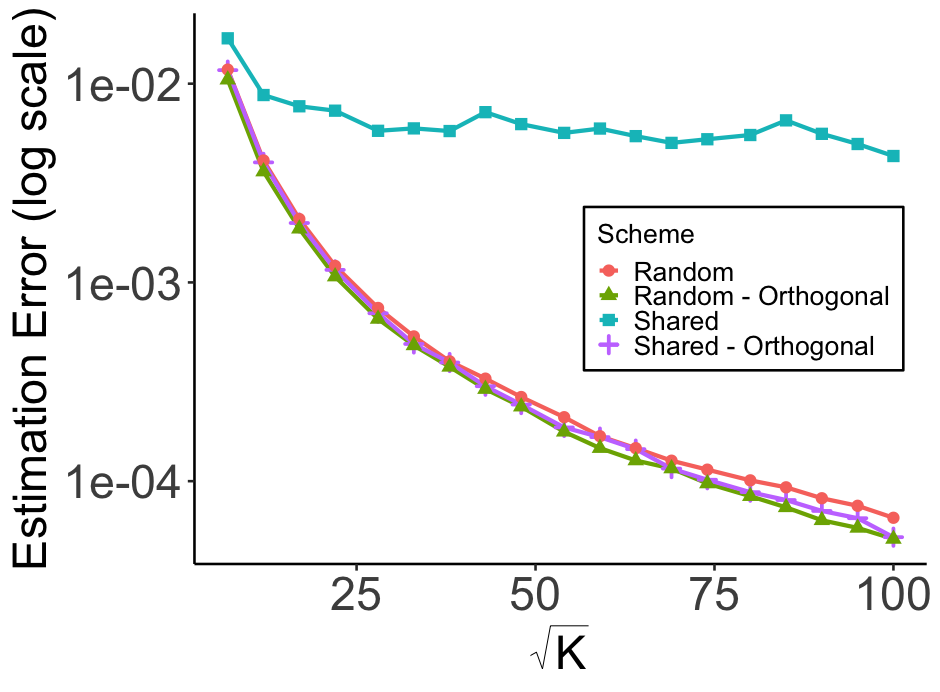}
    \includegraphics[width=0.45\textwidth]{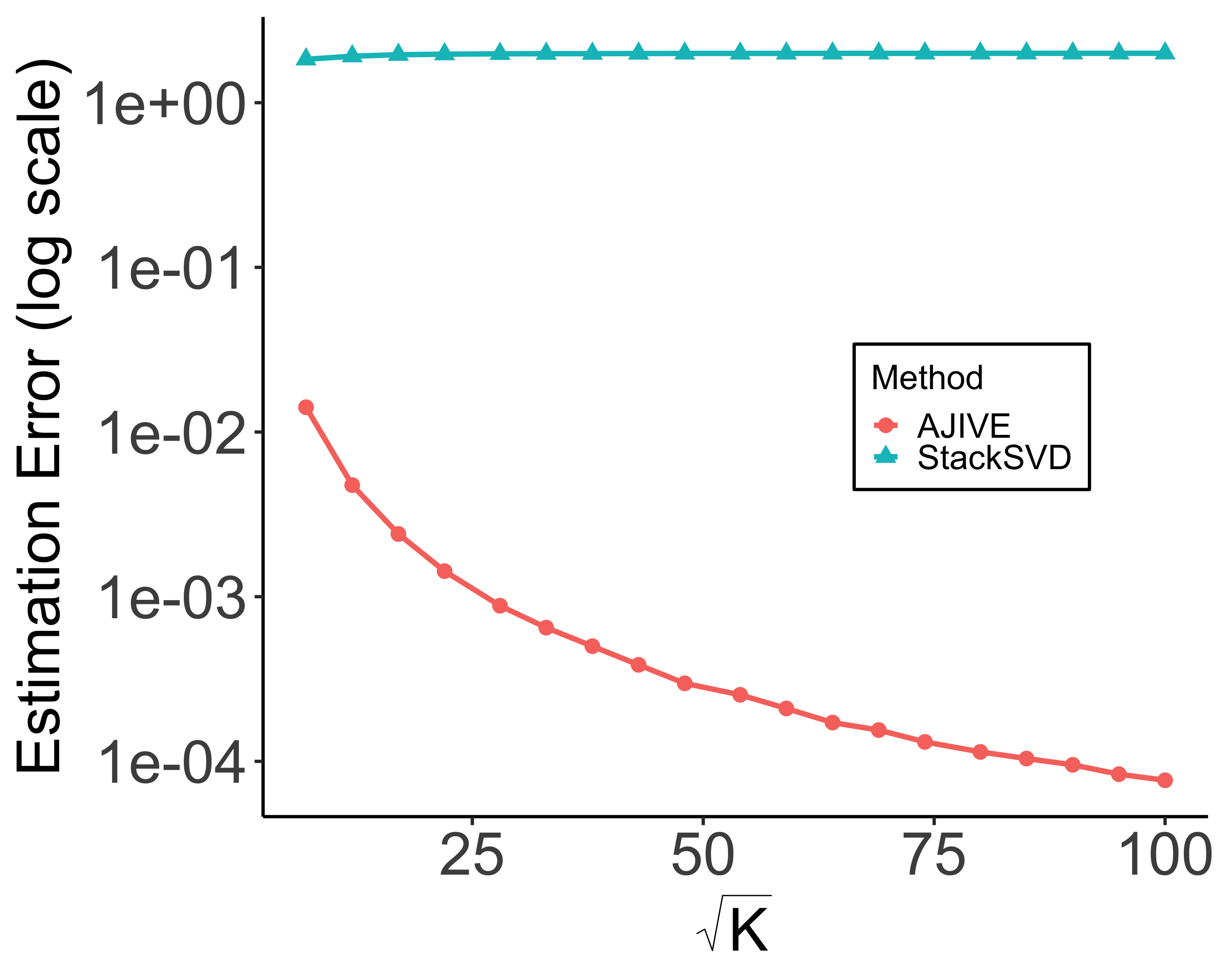}
    \hfill
    \caption{Homogeneous-noise simulations.  The vertical axis is the
    operator-norm error
    $\|\hat\U\hat\U^\top-\U\U^\top\|$ and the horizontal
    axis is $\sqrt K$.  Left: AJIVE under the four loading schemes with
    $(\sigma,\theta,\gamma)=(0.1,0.5,0.5)$.  Right: AJIVE and Stack-SVD
    under the random loading scheme with
    $(\sigma,\theta,\gamma)=(0.1,0.5,1.5)$.  Each point is the mean of
    $100$ independent replications; error bars show two Monte Carlo
    standard errors.}
    \label{fig:homo-1}
\end{figure}

\subsection{Heterogeneous Setting}

We next consider view-specific noise levels.  We set $s=10$,
$\gamma=2$, $\theta=0.6$, $r=3$, $n=40$, and $d=50$, and independently
draw $\sigma_k\sim\operatorname{Unif}(0.1,2)$ for $k\in[K]$ within each replication.  We consider the random and shared-orthogonal
loading schemes and compare four estimators: (i) Stack-SVD; (ii) AJIVE, using weights $1/K$; (iii) the default one-shot data-driven \hjive{} estimator in \Cref{alg:data-driven-hjive}; and (iv) the optional refined estimator, denoted \textnormal{\hjive{}-refined} in  \Cref{subsec:optional-refinement}.
For the refined estimator,
we use at most $25$ updates, stopping
early when successive weights differ by at most $10^{-6}$. If a
candidate update has a numerically nonpositive estimated gap, the last
well-defined iterate is retained.  

\Cref{fig:hetero-1} shows that the one-shot and refined \hjive{}
estimators outperform both AJIVE and Stack-SVD at every reported value
of $K$ under both loading schemes.  The Stack-SVD error remains close to
one, whereas the AJIVE and \hjive{} errors decrease as $K$ grows.
Down-weighting the noisier views yields a substantial improvement over
AJIVE\@.  For example, at $\sqrt K=50$, the mean errors of AJIVE,
one-shot \hjive{}, and refined \hjive{} are respectively $0.0215$,
$0.00738$, and $0.00742$ under the random scheme, and $0.0207$,
$0.00708$, and $0.00713$ under the shared-orthogonal scheme.

The one-shot and refined curves are generally very close.  The most
visible separation occurs at $\sqrt K=5$, after which the two curves
nearly overlap.  This behavior is consistent with the role of the
refinement as a geometry-adaptive sensitivity analysis rather than as
the default estimator.  

\begin{figure}[!ht]
    \centering
    \captionsetup{font=small,skip=4pt}
    \includegraphics[width=0.40\textwidth]{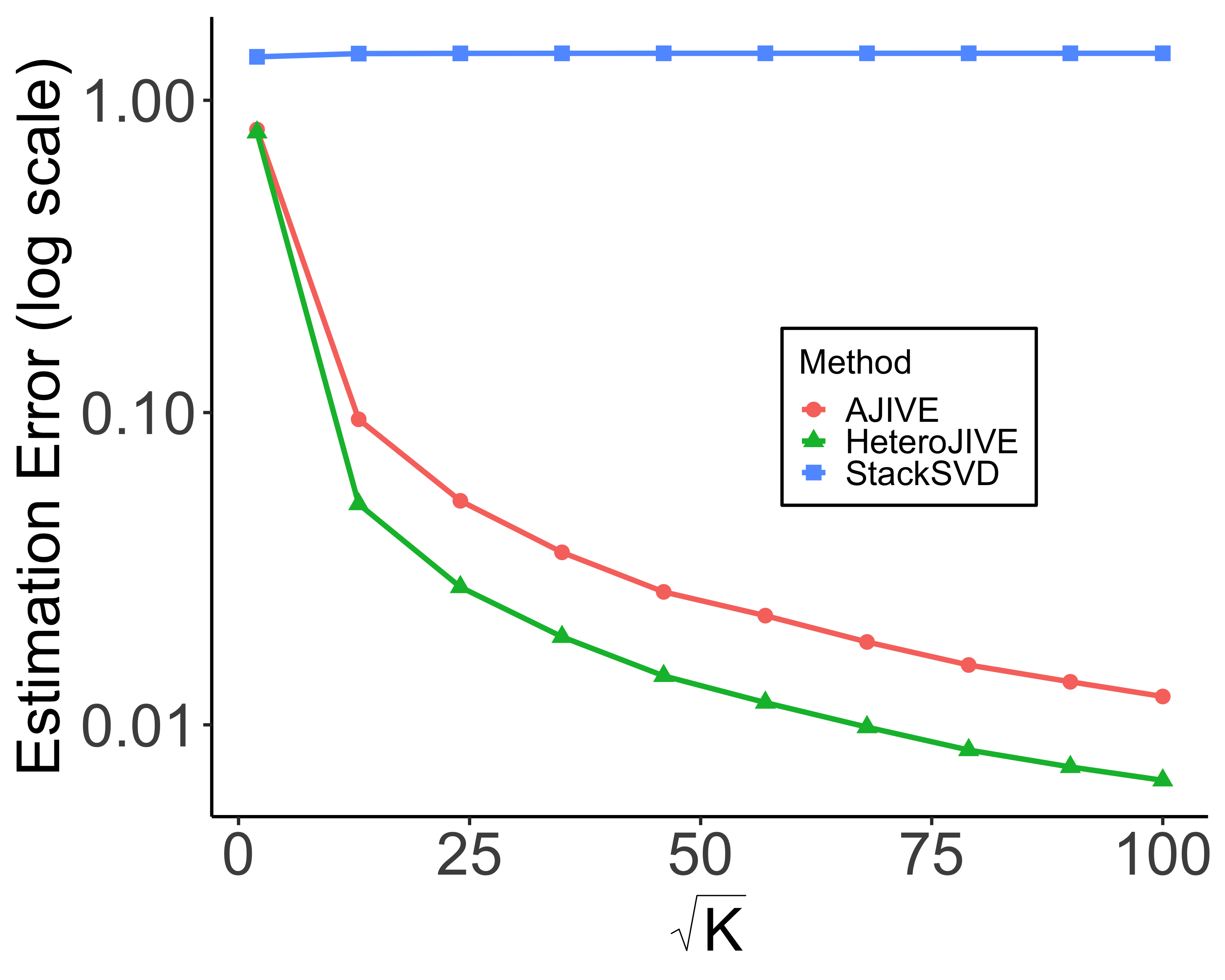}
    \hfill
    \includegraphics[width=0.40\textwidth]{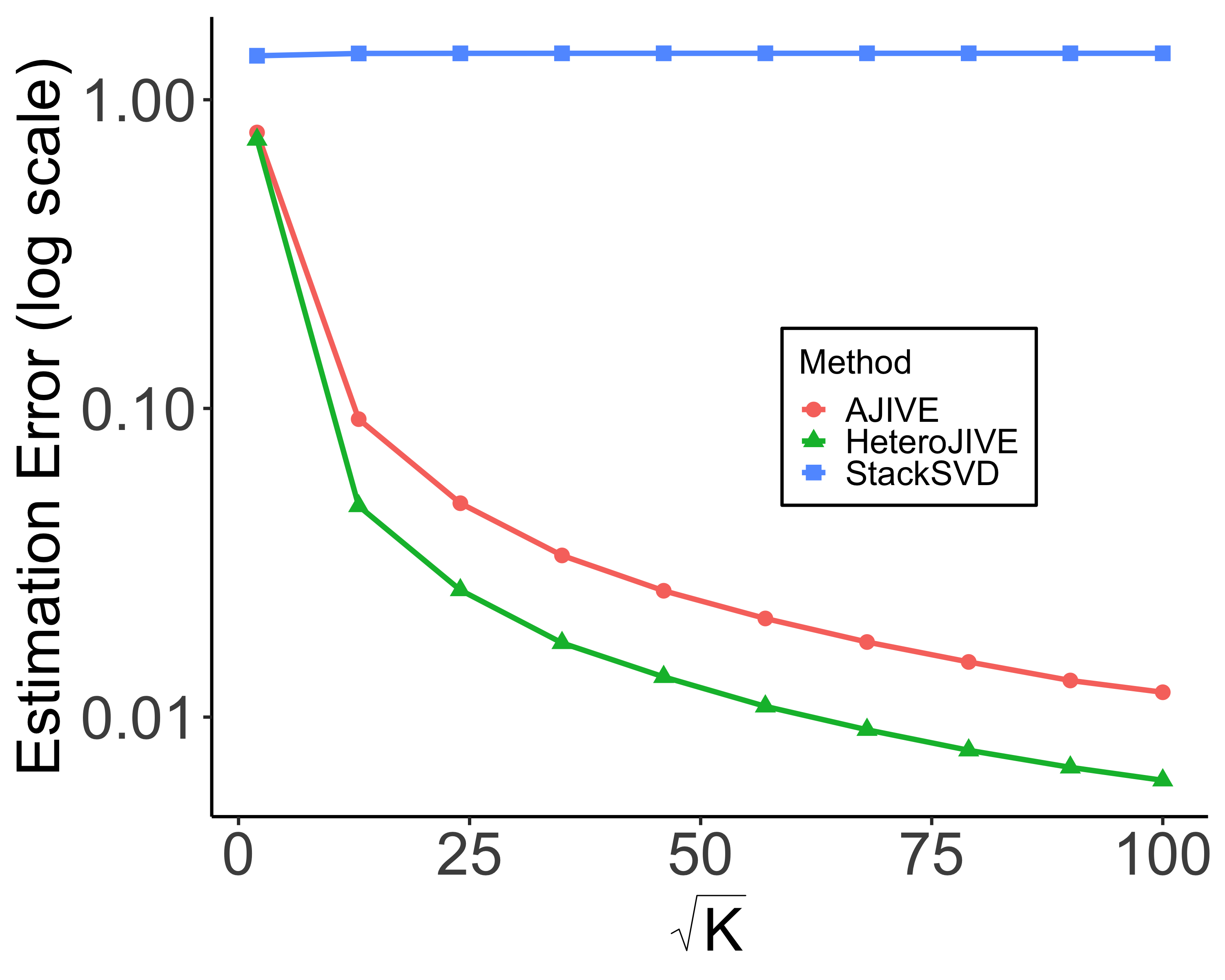}
    \caption{Heterogeneous-noise simulations comparing Stack-SVD,
    equal-weight AJIVE, the default one-shot data-driven \hjive{}, and
    its optional frozen-geometry refinement.  The vertical axis is the
    operator-norm error
    $\|\hat\U\hat\U^\top-\U\U^\top\|$ and the horizontal
    axis is $\sqrt K$.  The random loading scheme is shown on the left
    and the shared-orthogonal scheme on the right.  Here
    $\sigma_k\overset{\mathrm{i.i.d.}}{\sim}\operatorname{Unif}(0.1,2)$,
    $(s,\theta,\gamma)=(10,0.6,2)$, $r=3$, $n=40$, and $d=50$.
    Each point is the mean of $100$ independent replications; error bars
    show two Monte Carlo standard errors.}
    \label{fig:hetero-1}
\end{figure}

\section{Real-data Analysis}

\subsection{TCGA-BRCA multi-omics data}
We illustrate \hjive{} on the TCGA-BRCA multi-omics dataset with four views: gene expression (GE), DNA methylation (ME), miRNA expression (miRNA), and reverse phase protein array (RPPA). Following the preprocessing procedure in \cite{lock2013bayesian}, the four aligned matrices contain $n=348$ tumor samples and have dimensions $d_1=645$, $d_2=574$, $d_3=423$, and $d_4=171$.

We fit AJIVE and the data-driven one-shot \hjive{} estimator with $(r,r_1,r_2,r_3,r_4)=(3,31,28,27,19)$, using the same ranks and input matrices for both methods.   These ranks are determined by R package \texttt{r.jive} \citep{o2016r} using its default permutation-based procedure. This yields AJIVE estimates $(\hat\U^{\rm A},\{\hat\U_k^{\rm A}\}_{k=1}^K)$ and \hjive{} estimates $(\hat\U^{\rm H},\{\hat\U_k^{\rm H}\}_{k=1}^K)$ for the  joint and individual subspaces. The resulting weights for one-shot \hjive{} for GE, ME, miRNA, and RPPA are $(0.307, 0.202, 0.326, 0.164)$.  

We evaluate each estimated individual subspace against its corresponding TCGA cluster labels: PAM50 mRNA for GE, methylation clusters for ME, miRNA clusters for miRNA, and RPPA clusters for RPPA \citep{cancer2012comprehensive}. All 348 samples and all label categories are retained, and labels are used only for post-hoc evaluation. In addition, we calculate the SWISS score \citep{cabanski2010swiss}, which measures how well Euclidean distances in the low-dimensional representation align with the a priori labels (lower is better). Table~\ref{tab:tcga-individual} reports SWISS and label-matched clustering error. 

\begin{table}[H]
\centering
\caption{TCGA-BRCA post-hoc evaluation of the assay-specific individual subspaces. ``Overall'' is the unweighted mean over the four tasks.}
\label{tab:tcga-individual}
\small
\setlength{\tabcolsep}{5pt}
\renewcommand{\arraystretch}{1.08}
\begin{tabular}{@{}l c cc cc@{}}
\toprule
& & \multicolumn{2}{c}{SWISS} & \multicolumn{2}{c}{Clustering error} \\
\cmidrule(lr){3-4}\cmidrule(lr){5-6}
Task & Clusters & AJIVE & \makecell{One-shot\\\hjive{}} & AJIVE & \makecell{One-shot\\\hjive{}} \\
\midrule
GE / PAM50 mRNA & 5 & 0.9765 & \textbf{0.9764} & 0.7011 & \textbf{0.6839} \\
ME / methylation & 5 & \textbf{0.9561} & 0.9566 & 0.7040 & \textbf{0.6782} \\
miRNA / miRNA cluster & 7 & \textbf{0.9505} & 0.9507 & 0.7586 & \textbf{0.7557} \\
RPPA / RPPA cluster & 7 & 0.8921 & \textbf{0.8886} & 0.5718 & \textbf{0.4943} \\
\midrule
Overall & -- & 0.9438 & \textbf{0.9431} & 0.6839 & \textbf{0.6530} \\
\bottomrule
\end{tabular}
\end{table}

\subsection{Caltech101-20 multi-view image data}
As a second real-data example, we analyze the Caltech101-20 multi-view image dataset, which contains $n=2386$ images from 20 object classes. Each image is represented by six aligned views that capture complementary aspects of appearance: Gabor and wavelet-moment (WM) features describe frequency- and texture-related patterns; CENTRIST uses census-transform histograms to summarize local intensity comparisons; histograms of oriented gradients (HOG) encode edge orientations; GIST summarizes global spatial layout; and local binary patterns (LBP) represent local texture. Their dimensions are $d_1=48$, $d_2=40$, $d_3=254$, $d_4=1984$, $d_5=512$, and $d_6=928$, respectively. Within each view, we center and scale the columns and remove constant or near-constant features. Class labels are used only for post-hoc evaluation.

We apply the permutation criterion in \texttt{r.jive} with the explicit constraint $\bar r_k\leq 20$. The procedure selects $(r,r_1,\ldots,r_6)=(6,6,7,14,14,14,14)$. The same matrices and ranks are used for AJIVE and one-shot \hjive{}. The resulting weights for Gabor, WM, CENTRIST, HOG, GIST, and LBP are $(0.111, 0.061, 0.035, 0.360, 0.193, 0.240)$, respectively. Note that these weights should be interpreted as reliability for estimating the joint subspace. In particular, HOG, LBP, and GIST have the three smallest $\hat\varepsilon_k$ values (0.222, 0.269, 0.298) and together receive 79.4\% of the total weight. Intuitively, their edge-orientation, local-texture, and global-layout summaries provide the most stable evidence for the shared low-rank object structure in these data.

To quantify the performance, we evaluate the joint subspace shared by all six views against the 20 object-class labels. AJIVE and one-shot \hjive{} yield SWISS values of 0.4284 and 0.4012, respectively, and label-matched clustering errors of 0.6023 and 0.5826, respectively. Both metrics indicate a modest but consistent improvement of \hjive{} over AJIVE on this  dataset.

\bibliographystyle{plainnat}
\bibliography{reference.bib}

@article{magnus1985differentiating,
  title={On differentiating eigenvalues and eigenvectors},
  author={Magnus, Jan R},
  journal={Econometric theory},
  volume={1},
  number={2},
  pages={179--191},
  year={1985},
  publisher={Cambridge University Press}
}

@article{Hotelling1936,
  author  = {Hotelling, Harold},
  title   = {Relations Between Two Sets of Variates},
  journal = {Biometrika},
  year    = {1936},
  volume  = {28},
  number  = {3--4},
  pages   = {321--377},
  doi     = {10.1093/biomet/28.3-4.321}
}

@article{WoldSjostromEriksson2001,
  author  = {Wold, Svante and Sj{\"o}str{\"o}m, Michael and Eriksson, Lennart},
  title   = {{PLS}-regression: A Basic Tool of Chemometrics},
  journal = {Chemometrics and Intelligent Laboratory Systems},
  year    = {2001},
  volume  = {58},
  number  = {2},
  pages   = {109--130},
  doi     = {10.1016/S0169-7439(01)00155-1}
}

@article{TalmonWu2019,
  author  = {Talmon, Ronen and Wu, Hau-Tieng},
  title   = {Latent Common Manifold Learning with Alternating Diffusion: Analysis and Applications},
  journal = {Applied and Computational Harmonic Analysis},
  year    = {2019},
  volume  = {47},
  number  = {3},
  pages   = {848--892},
  doi     = {10.1016/j.acha.2017.12.006}
}

@article{KatzTalmonLoWu2019,
  author  = {Katz, Ori and Talmon, Ronen and Lo, Yu-Lun and Wu, Hau-Tieng},
  title   = {Alternating Diffusion Maps for Multimodal Data Fusion},
  journal = {Information Fusion},
  year    = {2019},
  volume  = {45},
  pages   = {346--360},
  doi     = {10.1016/j.inffus.2018.01.007}
}

@article{ShuWangZhu2020,
  author  = {Shu, Hai and Wang, Xiao and Zhu, Hongtu},
  title   = {{D-CCA}: A Decomposition-Based Canonical Correlation Analysis for High-Dimensional Datasets},
  journal = {Journal of the American Statistical Association},
  year    = {2020},
  volume  = {115},
  number  = {529},
  pages   = {292--306},
  doi     = {10.1080/01621459.2018.1543599}
}

@article{ShuQuZhu2022,
  author  = {Shu, Hai and Qu, Zhe and Zhu, Hongtu},
  title   = {{D-GCCA}: Decomposition-Based Generalized Canonical Correlation Analysis for Multi-View High-Dimensional Data},
  journal = {Journal of Machine Learning Research},
  year    = {2022},
  volume  = {23},
  number  = {169},
  pages   = {1--64}
}

@article{MergnyZdeborova2025,
  author  = {Mergny, Pierre and Zdeborov{\'a}, Lenka},
  title   = {Spectral Thresholds in Correlated Spiked Models and Fundamental Limits of Partial Least Squares},
  journal = {arXiv preprint arXiv:2510.17561},
  year    = {2025},
  doi     = {10.48550/arXiv.2510.17561},
  eprint  = {2510.17561},
  archivePrefix = {arXiv},
  primaryClass  = {math.ST}
}

@article{KeupZdeborova2025,
  author  = {Keup, Christian and Zdeborov{\'a}, Lenka},
  title   = {Optimal Thresholds and Algorithms for a Model of Multi-Modal Learning in High Dimensions},
  journal = {Journal of Statistical Mechanics: Theory and Experiment},
  year    = {2025},
  volume  = {2025},
  number  = {9},
  pages   = {093302},
  doi     = {10.1088/1742-5468/ae0428},
  eprint  = {2407.03522},
  archivePrefix = {arXiv},
  primaryClass  = {stat.ML}
}

@article{BykhovskayaGorin2023,
  author  = {Bykhovskaya, Anna and Gorin, Vadim},
  title   = {High-Dimensional Canonical Correlation Analysis},
  journal = {arXiv preprint arXiv:2306.16393},
  year    = {2023},
  doi     = {10.48550/arXiv.2306.16393},
  eprint  = {2306.16393},
  archivePrefix = {arXiv},
  primaryClass  = {econ.EM}
}

@article{ma2024optimal,
  title={Optimal estimation of shared singular subspaces across multiple noisy matrices},
  author={Ma, Zhengchi and Ma, Rong},
  journal={arXiv preprint arXiv:2411.17054},
  year={2024}
}

@article{yan2024inference,
  title={Inference for heteroskedastic PCA with missing data},
  author={Yan, Yuling and Chen, Yuxin and Fan, Jianqing},
  journal={The Annals of Statistics},
  volume={52},
  number={2},
  pages={729--756},
  year={2024},
  publisher={Institute of Mathematical Statistics}
}

@article{zhang2022heteroskedastic,
  title={Heteroskedastic PCA: Algorithm, optimality, and applications},
  author={Zhang, Anru R and Cai, T Tony and Wu, Yihong},
  journal={The Annals of Statistics},
  volume={50},
  number={1},
  pages={53--80},
  year={2022},
  publisher={Institute of Mathematical Statistics}
}

@article{grammenos2020federated,
  title={Federated principal component analysis},
  author={Grammenos, Andreas and Mendoza Smith, Rodrigo and Crowcroft, Jon and Mascolo, Cecilia},
  journal={Advances in neural information processing systems},
  volume={33},
  pages={6453--6464},
  year={2020}
}

@article{hong2023optimally,
  title={Optimally weighted PCA for high-dimensional heteroscedastic data},
  author={Hong, David and Yang, Fan and Fessler, Jeffrey A and Balzano, Laura},
  journal={SIAM Journal on Mathematics of Data Science},
  volume={5},
  number={1},
  pages={222--250},
  year={2023},
  publisher={SIAM}
}

@article{zheng2022limit,
  title={Limit results for distributed estimation of invariant subspaces in multiple networks inference and PCA},
  author={Zheng, Runbing and Tang, Minh},
  journal={arXiv preprint arXiv:2206.04306},
  year={2022}
}

@article{liang2014improved,
  title={Improved distributed principal component analysis},
  author={Liang, Yingyu and Balcan, Maria-Florina F and Kanchanapally, Vandana and Woodruff, David},
  journal={Advances in neural information processing systems},
  volume={27},
  year={2014}
}

@article{oba2007heterogeneous,
  title={Heterogeneous component analysis},
  author={Oba, Shigeyuki and Kawanabe, Motoaki and M{\"u}ller, Klaus-Robert and Ishii, Shin},
  journal={Advances in Neural Information Processing Systems},
  volume={20},
  year={2007}
}

@article{zhou2015group,
  title={Group component analysis for multiblock data: Common and individual feature extraction},
  author={Zhou, Guoxu and Cichocki, Andrzej and Zhang, Yu and Mandic, Danilo P},
  journal={IEEE transactions on neural networks and learning systems},
  volume={27},
  number={11},
  pages={2426--2439},
  year={2015},
  publisher={IEEE}
}

@article{shi2024personalized,
  title={Personalized pca: Decoupling shared and unique features},
  author={Shi, Naichen and Al Kontar, Raed},
  journal={Journal of machine learning research},
  volume={25},
  number={41},
  pages={1--82},
  year={2024}
}

@article{cai2018rate,
  title={RATE-OPTIMAL PERTURBATION BOUNDS FOR SINGULAR SUBSPACES WITH APPLICATIONS TO HIGH-DIMENSIONAL STATISTICS},
  author={Cai, T Tony and Zhang, Anru},
  journal={The Annals of Statistics},
  volume={46},
  number={1},
  pages={60--89},
  year={2018}
}

@article{zhang2022joint,
  title={Joint association and classification analysis of multi-view data},
  author={Zhang, Yunfeng and Gaynanova, Irina},
  journal={Biometrics},
  volume={78},
  number={4},
  pages={1614--1625},
  year={2022},
  publisher={Wiley Online Library}
}

@article{sergazinov2024spectral,
  title={A spectral method for multi-view subspace learning using the product of projections},
  author={Sergazinov, Renat and Taeb, Armeen and Gaynanova, Irina},
  journal={arXiv preprint arXiv:2410.19125},
  year={2024}
}

@article{cabanski2010swiss,
  title={Swiss made: Standardized within class sum of squares to evaluate methodologies and dataset elements},
  author={Cabanski, Christopher R and Qi, Yuan and Yin, Xiaoying and Bair, Eric and Hayward, Michele C and Fan, Cheng and Li, Jianying and Wilkerson, Matthew D and Marron, JS and Perou, Charles M and others},
  journal={PloS one},
  volume={5},
  number={3},
  pages={e9905},
  year={2010},
  publisher={Public Library of Science San Francisco, USA}
}

@article{o2016r,
  title={R. JIVE for exploration of multi-source molecular data},
  author={O’Connell, Michael J and Lock, Eric F},
  journal={Bioinformatics},
  volume={32},
  number={18},
  pages={2877--2879},
  year={2016},
  publisher={Oxford University Press}
}

@inproceedings{caesar2020nuscenes,
  title={nuscenes: A multimodal dataset for autonomous driving},
  author={Caesar, Holger and Bankiti, Varun and Lang, Alex H and Vora, Sourabh and Liong, Venice Erin and Xu, Qiang and Krishnan, Anush and Pan, Yu and Baldan, Giancarlo and Beijbom, Oscar},
  booktitle={Proceedings of the IEEE/CVF conference on computer vision and pattern recognition},
  pages={11621--11631},
  year={2020}
}

@article{geiger2013vision,
  title={Vision meets robotics: The kitti dataset},
  author={Geiger, Andreas and Lenz, Philip and Stiller, Christoph and Urtasun, Raquel},
  journal={The international journal of robotics research},
  volume={32},
  number={11},
  pages={1231--1237},
  year={2013},
  publisher={Sage Publications Sage UK: London, England}
}

@article{smilde2003framework,
  title={A framework for sequential multiblock component methods},
  author={Smilde, Age K and Westerhuis, Johan A and De Jong, Sijmen},
  journal={Journal of Chemometrics: A Journal of the Chemometrics Society},
  volume={17},
  number={6},
  pages={323--337},
  year={2003},
  publisher={Wiley Online Library}
}

@article{lofstedt2013global,
  title={Global, local and unique decompositions in OnPLS for multiblock data analysis},
  author={L{\"o}fstedt, Tommy and Hoffman, Daniel and Trygg, Johan},
  journal={Analytica chimica acta},
  volume={791},
  pages={13--24},
  year={2013},
  publisher={Elsevier}
}

@article{arroyo2021inference,
  title={Inference for multiple heterogeneous networks with a common invariant subspace},
  author={Arroyo, Jes{\'u}s and Athreya, Avanti and Cape, Joshua and Chen, Guodong and Priebe, Carey E and Vogelstein, Joshua T},
  journal={Journal of Machine Learning Research},
  volume={22},
  number={142},
  pages={1--49},
  year={2021}
}

@article{feng2018angle,
  title={Angle-based joint and individual variation explained},
  author={Feng, Qing and Jiang, Meilei and Hannig, Jan and Marron, JS},
  journal={Journal of multivariate analysis},
  volume={166},
  pages={241--265},
  year={2018},
  publisher={Elsevier}
}

@article{baharav2025stacked,
  title={Stacked SVD or SVD stacked? A Random Matrix Theory perspective on data integration},
  author={Baharav, Tavor Z and Nicol, Phillip B and Irizarry, Rafael A and Ma, Rong},
  journal={arXiv preprint arXiv:2507.22170},
  year={2025}
}

@article{li2024federated,
  title={Federated PCA and Estimation for Spiked Covariance Matrices: Optimal Rates and Efficient Algorithm},
  author={Li, Jingyang and Cai, T Tony and Xia, Dong and Zhang, Anru R},
  journal={arXiv preprint arXiv:2411.15660},
  year={2024}
}

@article{fan2019distributed,
  title={Distributed estimation of principal eigenspaces},
  author={Fan, Jianqing and Wang, Dong and Wang, Kaizheng and Zhu, Ziwei},
  journal={Annals of statistics},
  volume={47},
  number={6},
  pages={3009},
  year={2019}
}

@article{tang2025mode,
  title={Mode-wise principal subspace pursuit and matrix spiked covariance model},
  author={Tang, Runshi and Yuan, Ming and Zhang, Anru R},
  journal={Journal of the Royal Statistical Society Series B: Statistical Methodology},
  volume={87},
  number={1},
  pages={232--255},
  year={2025},
  publisher={Oxford University Press UK}
}

@article{yang2025estimating,
  title={Estimating shared subspace with AJIVE: the power and limitation of multiple data matrices},
  author={Yang, Yuepeng and Ma, Cong},
  journal={arXiv preprint arXiv:2501.09336},
  year={2025}
}

@article{cancer2012comprehensive,
  title={Comprehensive molecular portraits of human breast tumours},
  author={The Cancer Genome Atlas Network},
  journal={Nature},
  volume={490},
  number={7418},
  pages={61--70},
  year={2012},
  publisher={Nature Publishing Group UK London}
}

@article{lock2013bayesian,
  title={Bayesian consensus clustering},
  author={Lock, Eric F and Dunson, David B},
  journal={Bioinformatics},
  volume={29},
  number={20},
  pages={2610--2616},
  year={2013},
  publisher={Oxford University Press}
}

@article{gaynanova2019structural,
  title={Structural learning and integrative decomposition of multi-view data},
  author={Gaynanova, Irina and Li, Gen},
  journal={Biometrics},
  volume={75},
  number={4},
  pages={1121--1132},
  year={2019},
  publisher={Oxford University Press}
}

@article{lock2013joint,
  title={Joint and individual variation explained (JIVE) for integrated analysis of multiple data types},
  author={Lock, Eric F and Hoadley, Katherine A and Marron, James Stephen and Nobel, Andrew B},
  journal={The annals of applied statistics},
  volume={7},
  number={1},
  pages={523},
  year={2013}
}

@book{vershynin2018high,
  title={High-dimensional probability: An introduction with applications in data science},
  author={Vershynin, Roman},
  volume={47},
  year={2018},
  publisher={Cambridge university press}
}

@article{xia2021normal,
	title={Normal approximation and confidence region of singular subspaces},
	author={Xia, Dong},
	journal={Electronic Journal of Statistics},
	volume={15},
	number={2},
	pages={3798--3851},
	year={2021},
	publisher={The Institute of Mathematical Statistics and the Bernoulli Society}
}

\newpage
\appendix

\newpage
{\centering\section*{\LARGE Supplementary Material to ``Spectral Joint Subspace Estimation for Heterogeneous Multi-View Data: Geometry and Reweighting"}}
This Supplementary Material  collects the detailed technical proofs and  derivations that support the main results in ``Spectral Joint Subspace Estimation for Heterogeneous Multi-View Data: Geometry and Reweighting".
\vspace{0.5cm}

\section{Summary}
The proofs are organized as follows. Theorem 5 provides the general upper-bound result from which Theorems 1--4 follow. Section B.1 records the equal-weight specialization. The proofs of Theorems 3 and 4 are given in Sections B.2 and B.3, and Theorem 5 is proved in Section B.4. The algorithm-specific lower bound, Proposition 1, is proved in Section B.5. The proofs of Propositions 2 and 3 are given in Sections B.6 and B.7, respectively. Section B.8 records the optional oracle fixed-point refinement and proves Proposition 4.
Finally, proofs of the primary lemmas used in \Cref{sec:pf:thm:prop} are provided in \Cref{pf:lemmas}, while additional technical lemmas are deferred to \Cref{sec:pf:technicallemmas}.

We briefly highlight the technical novelty of our analysis. Our primary tool is the explicit representation of the spectral projector developed by \cite{xia2021normal}. While \cite{yang2025estimating} utilizes the same tool, their analysis is restricted to the first-order expansion. Consequently, their derived error bound remains non-vanishing with respect to $K$, even in the ideal scenario where $\V_k^\top \W_k = 0$. In contrast, our analysis systematically accounts for the full expansion to all orders, which is technically more involved and leads to sharper bounds that can vanish as $K \to \infty$ under near-orthogonal loading-orientation conditions or a sign-symmetric random-loading model.

\section{Proofs of Theorems and Propositions}\label{sec:pf:thm:prop}
{
Before stating the general theorem, we collect some notation used in the perturbation expansion. Let
\begin{align*}
    \calG:=\sigma\brac{\U,\ebrac{\U_k,\V_k,\W_k}_{k=1}^{K}},
    \qquad
    \mat{\G_k\\ \H_k}
    :=\mat{\bar\U_k^\top\E_k\\ \bar\U_{k\perp}^\top\E_k},
    \quad k\in[K].
\end{align*}
Here $\bar\R_k$ denotes the right singular vectors of $\U\V_k^\top+\U_k\W_k^\top$ associated with its nonzero singular values. We define the local good event $\calE_k:=\calE_{k,1}\cap\calE_{k,2}$ by
\begin{equation}
\label{eq:local-good-event}
\begin{aligned}
    \calE_{k,1}
    &:=\ebrac{\op{\G_k\bar\R_k}\le C\sqrt{n\wedge d_k}\sigma_k}
    \cap\ebrac{\op{\H_k\bar\R_k}\le C\sqrt{n}\sigma_k}, \\
    \calE_{k,2}
    &:=\ebrac{\op{\G_k\G_k^\top-d_k\sigma_k^2\I}\le C\sqrt{(n\wedge d_k)d_k}\sigma_k^2}
    \cap\ebrac{\op{\G_k\H_k^\top}\le C\sqrt{nd_k}\sigma_k^2} \\
    &\qquad\cap\ebrac{\op{\H_k\H_k^\top-d_k\sigma_k^2\I}\le C\brac{\sqrt{nd_k}+n}\sigma_k^2}.
\end{aligned}
\end{equation}
By standard Gaussian operator-norm and Wishart concentration bounds, the constant $C$ can be chosen so that, conditionally on $\calG$,
\begin{align*}
    \PP(\calE_k^c\mid\calG)\leq C\exp\{-c(n\wedge d_k)\}.
\end{align*}}
Finally, define $\R_k:=\U^\top\bar\U_k\bar\U_k^\top
    \brac{\bar\U_k\bar\U_k^\top-\tilde\U_k\tilde\U_k^\top}\bar\U_k\bar\U_k^\top\U_\perp\bLa^{-1}$.

We now state the general theorem, whose proof can be found in \Cref{proof:thm:main}.
Recall the misalignment is defined as $\theta = 1- \op{\sum_{k=1}^{K} w_k\U_k\U_k^\top}$, and $\I - \sum_{k=1}^{K} w_k\bar\U_k\bar\U_k^\top = \U_{\perp}\bLa\U_{\perp}^\top$
and $\varepsilon_k = n^{1/2}\lambda_{k,\min}^{-1}\sigma_k +(nd_k)^{1/2}\lambda_{k,\min}^{-2}\sigma_k^2$.
\begin{theorem}\label{thm:main}
Assume, for a sufficiently small absolute constant $c_0>0$ and a sufficiently large absolute constant $C_0>0$,
    \begin{align}
    &\kappa_k\snr_k^{-1}n^{1/2} + \snr_k^{-2}(nd_k)^{1/2}\le c_0, \quad\forall k\in[K], \label{theta-max}\\
        &\theta\ge C_0\max\bigg\{\sum_{k=1}^{K} w_k\varepsilon_k^2, \bigg(\sum_{k=1}^{K}w_k^2\varepsilon_k^2\log n\bigg)^{1/2}\bigg\}.\label{theta-average}  
    \end{align}
Then we have with probability exceeding $1-C\sum_{k=1}^Ke^{-n\wedge d_k}-Cn^{-10}$,
\begin{align*}
    \op{\hat\U\hat\U^\top - \U\U^\top} &\leq C\sqrt{\sum_{k=1}^{K}n^{-1}w_k^2 \big(\tr(\M_k)+\bar r_k\op{\M_k}\big)\varepsilon_k^{2}}\sqrt{\log (n)}\\
& +C\sqrt{\sum_{k=1}^{K}w_k^2\varepsilon_k^{4}\theta^{-2}}\sqrt{\log n}
+ C\op{\sum_{k=1}^Kw_k\EE_{\calG}(\R_k\cdot 1(\calE_k))},
\end{align*}
where $\M_k = \bar\U_{k\perp}^\top\U_{\perp}\bLa^{-2}\U_{\perp}^\top\bar\U_{k\perp}$.
\end{theorem}


{
\subsection{Equal-weight specialization}

We record the reduction of the first term in Theorem~\ref{thm:main}
to the form used in Theorems~\ref{thm:equal-weight-deterministic}
and~\ref{thm:equal-weight-random}. Let $w_k=K^{-1}$ and define, on
$\operatorname{span}(\U)^\perp$,
\begin{align*}
    \B
    :=\frac1K\sum_{k=1}^K
    \U_\perp^\top\U_k\U_k^\top\U_\perp,
    \qquad
    \widetilde\P_k
    :=\I-\U_\perp^\top\U_k\U_k^\top\U_\perp.
\end{align*}
Then
\begin{align*}
    \bLa=\I-\B,
    \qquad
    \frac1K\sum_{k=1}^K\widetilde\P_k=\bLa.
\end{align*}
By cyclicity of the trace,
\begin{align*}
    \frac1K\sum_{k=1}^K\tr(\M_k)
    &=\frac1K\sum_{k=1}^K
      \tr\brac{\bLa^{-2}\widetilde\P_k}
      =\tr(\bLa^{-1}).
\end{align*}
If $\mu_1,\ldots,\mu_{n-r}$ are the eigenvalues of $\B$, then
$0\leq\mu_j\leq1-\theta$ and
$\sum_j\mu_j=\tr(\B)=r_{\rm avg}$. Hence
\begin{align*}
    \tr(\bLa^{-1})
    &=\sum_{j=1}^{n-r}\frac1{1-\mu_j}
      =(n-r)+\sum_{j=1}^{n-r}\frac{\mu_j}{1-\mu_j}\\
    &\leq n+\theta^{-1}r_{\rm avg}.
\end{align*}
Since $\op{\M_k}\leq\tr(\M_k)$ and
$\bar r_k\leq\bar r$, it follows that
\begin{align*}
&\sum_{k=1}^K
  n^{-1}K^{-2}
  \brac{\tr(\M_k)+\bar r_k\op{\M_k}}\\
&\qquad\leq
C\frac{\bar r}{K}
\brac{1+\frac{r_{\rm avg}}{n\theta}}.
\end{align*}
Because $\varepsilon_k\asymp\varepsilon$, the first term in
Theorem~\ref{thm:main} is therefore bounded by
\begin{align*}
    C\varepsilon
    \sqrt{
      \frac{\bar r\log n}{K}
      \brac{1+\frac{r_{\rm avg}}{n\theta}}
    }.
\end{align*}
Combining this calculation with
Lemma~\ref{lemma:expectation-upper-bound} gives
Theorem~\ref{thm:equal-weight-deterministic}; combining it with the
conditional centering argument in the proof of
Theorem~\ref{thm:adaptive-weights-random} gives
Theorem~\ref{thm:equal-weight-random}.
}

\subsection{Proof of Theorem \ref{thm:adaptive-weights-deterministic}}
\label{proof:thm:adaptive-weights-deterministic}
The assumptions in \Cref{thm:adaptive-weights-deterministic} imply those of \Cref{thm:main}. Hence it remains only to control the last term in \Cref{thm:main}. By the triangle inequality and \Cref{lemma:expectation-upper-bound},
\begin{align*}
    \op{\sum_{k=1}^{K}w_k\EE_{\calG}\bbrac{\R_k\cdot 1(\calE_k)}}
    &\leq \sum_{k=1}^{K}w_k\op{\EE_{\calG}\bbrac{\R_k\cdot 1(\calE_k)}} \\
    &\leq C\sum_{k=1}^{K}w_k\brac{\delta_k(1-\delta_k^2)^{-1}\wedge 1}\varepsilon_k^2\theta^{-1}.
\end{align*}
Substituting this bound into \Cref{thm:main} gives the asserted result.

\subsection{Proof of Theorem \ref{thm:adaptive-weights-random}}\label{proof:thm:adaptive-weights-random}
{
For the random-loading setting, we apply \Cref{thm:main} conditionally on
\begin{align*}
    \calG:=\sigma\brac{\U,\ebrac{\U_j,\V_j,\W_j}_{j=1}^{K}},
    \qquad
    \calH:=\sigma\brac{\U,\ebrac{\U_j,\W_j}_{j=1}^{K}},
\end{align*}
and write $\EE_{\calG}(\cdot):=\EE(\cdot\mid\calG)$. For each $k\in[K]$, let
\begin{align*}
    \calF_k:=\ebrac{\lambda_{r+r_k}\brac{\mat{\V_k&\W_k}}\geq \lambda_{k,\min}}.
\end{align*}
By \Cref{assump:VW:random}, $\PP(\calF_k)\geq 1-p_k$. Define the conditional bias term
\begin{align*}
    \T_k:=\EE_{\calG}\bbrac{\R_k\cdot 1(\calE_k)}.
\end{align*}
On $\calF_k$, by \Cref{lemma:expectation-upper-bound}, we have 
\begin{align}\label{eq:random-Tk-bound}
    \op{\T_k}
    &\leq C\varepsilon_k^2\brac{\delta_k(1-\delta_k^2)^{-1}\wedge 1}\theta^{-1}
    \leq C\varepsilon_k^2\theta^{-1}.
\end{align}

We next show that $\T_k\cdot 1(\calF_k)$ is centered with respect to the randomness of $\V_k$, conditionally on $\calH$. Fix $\calH$ and set
\begin{align*}
    \Pi_k:=\begin{bmatrix}\I_r&0\\0&-\I_{r_k}\end{bmatrix}.
\end{align*}
Under the sign flip $\V_k\mapsto -\V_k$,
\begin{align*}
    \mat{(-\V_k)^\top\\ \W_k^\top}\mat{-\V_k&\W_k}
    =\Pi_k\bGa_k\Pi_k.
\end{align*}
Moreover, $\calF_k$ is invariant under this sign flip, since $\mat{\V_k&\W_k}$ and $\mat{-\V_k&\W_k}$ have the same singular values. Let
\begin{align*}
    \B_k(\V_k,\W_k)
    :=\EE_{\calG}\bbrac{\bar\U_k^\top\brac{\bar\U_k\bar\U_k^\top-\tilde\U_k\tilde\U_k^\top}\bar\U_k\cdot 1(\calE_k)}.
\end{align*}
The Gaussian law of the noise blocks and the event $\calE_k$ are invariant under the corresponding block sign flip. Therefore, the equivariance argument used in the proof of \Cref{lemma:expectation-upper-bound} gives
\begin{align*}
    \B_k(-\V_k,\W_k)=\Pi_k\B_k(\V_k,\W_k)\Pi_k.
\end{align*}
Consequently, the $(1,2)$ block is odd in $\V_k$:
\begin{align*}
    [\B_k(-\V_k,\W_k)]_{12}=-[\B_k(\V_k,\W_k)]_{12}.
\end{align*}
Since
\begin{align*}
    \T_k
    =\begin{bmatrix}\I_r&0\end{bmatrix}\B_k(\V_k,\W_k)
    \begin{bmatrix}0\\ \U_k^\top\U_\perp\end{bmatrix}\bLa^{-1},
\end{align*}
and $\U_k,\U_\perp,\bLa$ do not depend on the sign of $\V_k$, we have
\begin{align*}
    \T_k(-\V_k,\W_k)=-\T_k(\V_k,\W_k).
\end{align*}
By \Cref{assump:VW:random}, conditionally on $\mathcal H$, $\V_k\stackrel{d}{=}-\V_k$, and hence
\begin{align}\label{eq:random-Tk-centered}
    \EE\bbrac{\T_k\cdot 1(\calF_k)\mid \calH}=0.
\end{align}

Now consider $\sum_{k=1}^{K}w_k\T_k\cdot 1(\calF_k)$. Conditionally on $\calH$, the summands are independent {by \Cref{assump:VW:random}} and centered by \eqref{eq:random-Tk-centered}. Moreover, \eqref{eq:random-Tk-bound} implies
\begin{align*}
    \op{w_k\T_k\cdot 1(\calF_k)}\leq Cw_k\varepsilon_k^2\theta^{-1}.
\end{align*}
Applying the matrix Hoeffding inequality to the self-adjoint dilation of the sum yields, with probability at least $1-O(n^{-10})$,
\begin{align}\label{eq:random-bias-sum-bound}
    \op{\sum_{k=1}^{K}w_k\T_k\cdot 1(\calF_k)}
    \leq C\brac{\sum_{k=1}^{K}w_k^2\varepsilon_k^4\theta^{-2}\log n}^{1/2}.
\end{align}
On $\cap_{k=1}^{K}\calF_k$, the indicators in \eqref{eq:random-bias-sum-bound} can be removed. Since
\begin{align*}
    \PP\brac{\bigcap_{k=1}^{K}\calF_k}
    \geq 1-\sum_{k=1}^{K}p_k,
\end{align*}
we obtain
\begin{align}\label{eq:random-conditional-bias-bound}
    \op{\sum_{k=1}^{K}w_k\EE_{\calG}\bbrac{\R_k\cdot 1(\calE_k)}}
    \leq C\brac{\sum_{k=1}^{K}w_k^2\varepsilon_k^4\theta^{-2}\log n}^{1/2}
\end{align}
with probability at least $1-\sum_{k=1}^{K}p_k-O(n^{-10})$.

Combining \eqref{eq:random-conditional-bias-bound} with \Cref{thm:main} applied conditionally on $\calG$ gives
\begin{align*}
    \op{\hat\U\hat\U^\top-\U\U^\top}
    &\leq C\sqrt{\sum_{k=1}^{K}n^{-1}w_k^2\brac{\tr(\M_k)+\bar r_k\op{\M_k}}\varepsilon_k^2}\sqrt{\log n} \\
    &\quad +C\sqrt{\sum_{k=1}^{K}w_k^2\varepsilon_k^4\theta^{-2}}\sqrt{\log n}.
\end{align*}
The probability of the intersection of the above events is at least $1-O\bbrac{\sum_{k=1}^{K}e^{-n\wedge d_k}}-\sum_{k=1}^{K}p_k-O\bbrac{n^{-10}}$. This completes the proof.
}

\subsection{Proof of Theorem \ref{thm:main}}\label{proof:thm:main}
Condition on $\calG$ throughout this proof. Under the assumption \eqref{theta-average}, we have $\theta>0$ and thus 
$\op{\sum_{k=1}^{K}w_k\U_k\U_k^\top}<1$. 
Since $\U_k^\top\U = 0$, we can conclude $$\I - \sum_{k=1}^{K}w_k\bar\U_k\bar\U_k^\top = \I - \U\U^\top - \sum_{k=1}^{K}w_k\U_k\U_k^\top$$ has rank $n-r$ and has $\U_{\perp}\R_0$ as its top $n-r$ left singular vectors, where $\R_0\in\OO_{n-r}$ is orthogonal matrix, and its minimal non-zero eigenvalue is $\theta$. 
On the other hand, $\hat\U_{\perp}$ is the top $n-r$ left singular vectors of $\I - \sum_{k=1}^{K}w_k\tilde\U_k\tilde\U_k^\top$. 

Let the rank $n-r$ decomposition of $\I - \sum_{k=1}^{K}w_k\bar\U_k\bar\U_k^\top$ be $\I - \sum_{k=1}^{K}w_k\bar\U_k\bar\U_k^\top = \U_{\perp}\bLa\U_{\perp}^\top$ (notice here $\bLa$ is not necessarily diagonal). 
And we denote $\Q_{\perp} = \I - \U_{\perp}\U_{\perp}^\top = \U\U^\top$, $\Q^{-s}=\U_{\perp}\bLa^{-s}\U_{\perp}^\top$ for $s>0$, and we denote 
\begin{align}\label{eq:delta}
    \bDelta = (\I -\sum_{k=1}^{K}w_k\tilde\U_k\tilde\U_k^\top) - (\I - \sum_{k=1}^{K}w_k\bar\U_k\bar\U_k^\top) =\sum_{k=1}^{K}w_k\underbrace{(\bar\U_k\bar\U_k^\top-\tilde\U_k\tilde\U_k^\top)}_{:=\bDelta_k}.
\end{align}
Using the notation introduced before \Cref{thm:main}, define
\begin{align*}
    \calE_{(i)}:=\bigcap_{k=1}^{K}\calE_{k,i},\qquad i=1,2.
\end{align*}
Then
\begin{align*}
    \PP\brac{\calE_{(1)}\cap\calE_{(2)}}
    \ge 1-C\sum_{k=1}^{K}e^{-n\wedge d_k}.
\end{align*}

We now bound the operator norms $\bop{\sum_{k=1}^{K} w_k\U^\top\bDelta_k\U_{\perp}\cdot 1(\calE_k)}$, $\bop{\sum_{k=1}^{K} w_k\U^\top\bDelta_k\U\cdot 1(\calE_k)}$, and $\bop{\sum_{k=1}^{K}w_k\U_{\perp}^\top\bDelta_k\U_{\perp}\cdot 1(\calE_k)}$, which are building blocks of our analysis.
\begin{lemma}\label{lemma:general}
We have
\begin{align*}
    \op{\sum_{k=1}^{K}w_k\U^\top\bDelta_k\U\cdot 1(\calE_k)} \leq C\sum_{k=1}^{K} w_k\varepsilon_k^2.
\end{align*}
Conditionally on $\calG$, with probability exceeding $1-4n^{-10}$, we have
\begin{align*}
\op{\sum_{k=1}^{K}w_k\U^\top\bDelta_k\U_{\perp}\cdot 1(\calE_k)}&\leq C\sum_{k=1}^{K}w_k\varepsilon_k^2 + C\bigg(\sum_{k=1}^{K}w_k^2\varepsilon_k^{2}\log (n)\bigg)^{1/2},\\
 \op{\sum_{k=1}^{K}w_k\U_{\perp}^\top\bDelta_k\U_{\perp}\cdot 1(\calE_k)}
    &\leq C\sum_{k=1}^{K}w_k\varepsilon_k^2 + C\bigg(\sum_{k=1}^{K}w_k^2\varepsilon_k^{2}\log (n)\bigg)^{1/2}.
\end{align*}
As a result, we have
\begin{align}\label{eq:sumkwkDeltak}
    \op{\sum_{k=1}^{K} w_k \bDelta_k\cdot 1(\calE_k)} \leq C\sum_{k=1}^{K}w_k\varepsilon_k^2 + C\bigg(\sum_{k=1}^{K}w_k^2\varepsilon_k^{2}\log (n)\bigg)^{1/2}.
\end{align}
Moreover, conditionally on $\calG$, with probability exceeding $1-2n^{-10}$, we have
\begin{align*}
&\op{\sum_{k=1}^{K}w_k\U^\top\bDelta_k\U_{\perp}\bLa^{-1}\cdot 1(\calE_k)}\leq C\sqrt{\sum_{k=1}^{K}n^{-1}w_k^2 \big(\tr(\M_k)+\bar r_k\op{\M_k}\big)\varepsilon_k^{2}}\sqrt{\log (n)}\\
& \hspace{2.5cm}+ C\sqrt{\sum_{k=1}^{K}w_k^2\varepsilon_k^{4}\theta^{-2}}\sqrt{\log n}+C\op{\sum_{k=1}^Kw_k\EE_{\calG}(\R_k\cdot 1(\calE_k))}.
\end{align*}
\end{lemma}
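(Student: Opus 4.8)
The plan is to obtain everything from the spectral‑projector expansion of \cite{xia2021normal} applied, view by view, to the rank‑$\bar r_k$ leading left singular subspace of $\A_k=\A_k^\ast+\E_k$ with $\A_k^\ast=\U\V_k^\top+\U_k\W_k^\top$ (left singular space $\bar\U_k$). On $\calE_k$ the spectral‑gap condition implied by \eqref{theta-max} holds, so $\bDelta_k=\bar\U_k\bar\U_k^\top-\tilde\U_k\tilde\U_k^\top=-\sum_{j\ge1}\mathcal S_{k,j}(\E_k)$, where $\mathcal S_{k,j}(\E_k)$ is a finite sum of sign‑alternating monomials that alternate $\E_k,\E_k^\top$, are sandwiched by the resolvent projectors (built from $\bar\U_{k\perp},\bar\R_{k\perp}$ at order $0$ and from powers of $\bGa_k^{-1}$ at higher orders), and satisfy $\op{\mathcal S_{k,j}(\E_k)}\lesssim(C\varepsilon_k)^j$, so $\op{\sum_{j\ge2}\mathcal S_{k,j}(\E_k)}\lesssim\varepsilon_k^2$ on $\calE_k$. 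The first thing I would record is the algebra that kills most monomials: $\U^\top\bar\U_{k\perp}=0$ (since $\mathrm{span}(\U)\subseteq\mathrm{span}(\bar\U_k)$), $\U^\top\U_\perp=0$ (since $(\I-\sum_k w_k\bar\U_k\bar\U_k^\top)\U=0$), and the fact that $\bar\U_k^\top\U_\perp$ has vanishing first $r$ rows; these decide, monomial by monomial, which pieces survive a two‑sided projection.

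For $\U^\top\bDelta_k\U$ a monomial survives the $\U^\top(\cdot)\U$ sandwich only if it both starts and ends with a $\bGa_k^{-s}$‑block ($s\ge1$), which is impossible at order $j=1$; hence only the $O(\varepsilon_k^2)$ tail remains, $\op{\U^\top\bDelta_k\U\cdot 1(\calE_k)}\lesssim\varepsilon_k^2$ surely, and the first bound follows from the triangle inequality and $\sum_k w_k=1$ with no concentration. For $\U^\top\bDelta_k\U_\perp$ and $\U_\perp^\top\bDelta_k\U_\perp$ the order‑one monomial does survive; its $\U^\top(\cdot)\U_\perp$ contribution is (up to the internal orthogonal rotation) $-\U^\top\bar\U_k\bSigma_k^{-1}(\H_k\bar\R_k)^\top\bar\U_{k\perp}^\top\U_\perp$, linear and centered in $\E_k$, with norm $\lesssim\lambda_{k,\min}^{-1}\sqrt n\,\sigma_k=\snr_k^{-1}\sqrt n\le\varepsilon_k$ on $\calE_{k,1}$, while the $j\ge2$ tail is again $O(\varepsilon_k^2)$ surely. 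Summing over the independent views, the tails give $\sum_k w_k\varepsilon_k^2$, and for the centered order‑one terms I would first subtract the negligible truncation bias $\EE\!\left[\U^\top\mathcal S_{k,1}(\E_k)\U_\perp\,1(\calE_k^c)\right]$ (controlled by $\PP(\calE_k^c)^{1/2}\lesssim e^{-(n\wedge d_k)/2}$ times a polynomial moment) and then apply matrix Bernstein, whose variance proxy is $\lesssim\sum_k w_k^2\varepsilon_k^2$, yielding the $\big(\sum_k w_k^2\varepsilon_k^2\log n\big)^{1/2}$ term. Finally \eqref{eq:sumkwkDeltak} is just $\sum_k w_k\bDelta_k=\sum_k w_k(\U\U^\top+\U_\perp\U_\perp^\top)\bDelta_k(\U\U^\top+\U_\perp\U_\perp^\top)$ bounded blockwise by the above.

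The refined bound is the delicate part, since one cannot pull $\op{\bLa^{-1}}\le\theta^{-1}$ out of the order‑one term. I would write that term as $X_k:=P_k\Psi_k^\top B_k$ with $P_k:=\U^\top\bar\U_k\bSigma_k^{-1}$ ($\op{P_k}\le\lambda_{k,\min}^{-1}$), $\Psi_k:=\H_k\bar\R_k$ (i.i.d.\ $N(0,\sigma_k^2)$ entries), and $B_k:=\bar\U_{k\perp}^\top\U_\perp\bLa^{-1}$. Tracing out the Gaussian gives $\EE[X_kX_k^\top]=\sigma_k^2\tr(B_kB_k^\top)P_kP_k^\top=\sigma_k^2\tr(\M_k)P_kP_k^\top$ and $\EE[X_k^\top X_k]=\sigma_k^2\fro{P_k}^2 B_k^\top B_k$, with $\fro{P_k}^2\le\bar r_k\lambda_{k,\min}^{-2}$ and $\op{B_k^\top B_k}=\op{\M_k}$; using $\sigma_k^2\lambda_{k,\min}^{-2}=\snr_k^{-2}\le n^{-1}\varepsilon_k^2$, the matrix‑Bernstein variance for $\sum_k w_kX_k$ is $\lesssim\sum_k n^{-1}w_k^2\big(\tr(\M_k)+\bar r_k\op{\M_k}\big)\varepsilon_k^2$, which is the first term (its Bernstein tail being dominated under \eqref{theta-average}). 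For $j\ge2$ I would split each monomial, after the $\U^\top(\cdot)\U_\perp\bLa^{-1}$ sandwich, into (a) a part centered in $\E_k$ — those in which some $\E_k$ is not paired into an internal factor $\bar\U_k^\top\E_k(\cdots)\E_k^\top\bar\U_k$ — whose norm on $\calE_k$ is $\lesssim\varepsilon_k^2\theta^{-1}$ and which, summed by matrix Bernstein, contributes $\big(\sum_k w_k^2\varepsilon_k^4\theta^{-2}\log n\big)^{1/2}$; and (b) the residuals obtained by replacing each such internal factor by its mean (a scalar multiple of $\I$), which by the orthogonality identities survive only in the shape $c_{k,r}\,\U^\top\bar\U_k\bGa_k^{-r}\bar\U_k^\top\U_\perp\bLa^{-1}$. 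Collecting (b) over all $j$ gives the last term, and since the full $j\ge2$ part has norm $O(\varepsilon_k^2)$ on $\calE_k$, the coefficients must obey $\sum_{r\ge1}|c_{k,r}|\lambda_{k,\min}^{-2r}\lesssim\varepsilon_k^2$.

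The main obstacle is precisely the bookkeeping in this last step: carrying all orders of the expansion simultaneously, classifying each internal factor as pairable into $\bar\U_k^\top\E_k(\cdots)\E_k^\top\bar\U_k$ or exposed, checking that replacing a pairable factor by its expectation leaves only strictly lower‑order exposed remainders so the recursion terminates, and — the crux — verifying that the accumulated scalar coefficients $c_{k,r}$, weighted by $\op{\bGa_k^{-r}}=\lambda_{k,\min}^{-2r}$, sum to $O(\varepsilon_k^2)$; this will use \eqref{theta-max} both to force convergence of the Neumann‑type series and to identify its sum with $\varepsilon_k^2$ up to constants, exploiting exactly the cancellations among the sign‑alternating monomials of \cite{xia2021normal} that already underlie the single‑view rate \eqref{eq:def-uk}. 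The remaining pieces — matrix Bernstein with the $1(\calE_k)$‑truncation and the Gaussian trace identities landing exactly on $\tr(\M_k)$ and $\op{\M_k}$ — are then routine.
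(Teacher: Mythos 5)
Your overall architecture matches the paper's: per-view expansion of $\bDelta_k$ via \cite{xia2021normal}, orthogonality to kill the first-order term in the $\U^\top(\cdot)\U$ block, a parity/centering argument for the off-diagonal blocks followed by matrix concentration across views, and an invariance argument identifying $\EE\R_k\cdot 1(\calE_k)$ as a series in $\bGa_k^{-r}$. The first three bounds and \eqref{eq:sumkwkDeltak} go through essentially as you describe.

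The gap is in the refined bound, and it comes from the quadratic-in-noise cross term at \emph{first} order in the expansion. Your claim $\op{\mathcal S_{k,j}}\lesssim (C\varepsilon_k)^j$ is only valid when $j$ counts powers of the full perturbation $\bXi_k=\A_k^*\E_k^\top+\E_k\A_k^{*\top}+\E_k\E_k^\top-d_k\sigma_k^2\I$ (if $j$ counted powers of $\E_k$, the degree-two block $\bGa_k^{-1}\G_k\H_k^\top$ has norm $\asymp\sqrt{nd_k}\,\snr_k^{-2}$, which exceeds $\varepsilon_k^2$ whenever $d_k\gg n$). But then $\mathcal S_{k,1}$ contains, besides your linear $X_k=P_k\Psi_k^\top B_k$, the piece $\U^\top\bar\U_k\bGa_k^{-1}\G_k\H_k^\top B_k$ with $\G_k=\bar\U_k^\top\E_k$. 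This term is mean zero but fits neither of your buckets: it is not your $X_k$, and its operator norm on $\calE_k$ is $O(\varepsilon_k\theta^{-1})$, not the $O(\varepsilon_k^2\theta^{-1})$ you assign to bucket (a). Feeding the crude bound $\varepsilon_k\theta^{-1}$ into matrix Bernstein inflates the second term to $\big(\sum_k w_k^2\varepsilon_k^{2}\theta^{-2}\log n\big)^{1/2}$ plus an $L\log n$ term of order $\max_k w_k\varepsilon_k\theta^{-1}\log n$, neither of which is dominated by the claimed $\big(\sum_k w_k^2\varepsilon_k^{4}\theta^{-2}\log n\big)^{1/2}$ nor, in general, by the $\tr(\M_k)$ term (since $\tr(\M_k)$ can be much smaller than $n\op{\M_k}$). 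The fix, which is exactly what the paper's Lemmas \ref{lemma:concentration-general}--\ref{lemma:psione-norm} provide, is to control $\op{\G_k\H_k^\top B_k}^2$ jointly through its $\psi_1$-norm, exploiting that $\G_k$ has only $\bar r_k$ rows so that $\psione{\op{\G_k\H_k^\top\M_0\H_k\G_k^\top}}\lesssim\sigma_k^2\big(\bar r_k\op{\H_k^\top\M_0\H_k}+\tr(\H_k^\top\M_0\H_k)\big)$; after normalizing by $\lambda_{k,\min}^{-4}$ this lands back on the variance proxy $n^{-1}\varepsilon_k^2\big(\tr(\M_k)+\bar r_k\op{\M_k}\big)$, and a Bernstein inequality for sums of matrices with sub-exponential squared norms (rather than plain bounded-norm Hoeffding) completes the argument. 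The same bookkeeping issue is harmless in the non-refined bounds, where the cross term's $O(\varepsilon_k)$ norm is absorbed into the $\sum_k w_k^2\varepsilon_k^2$ variance. Your "replace paired internal factors by their means" recursion for the bias term is a plausible alternative to the paper's equivariance (Procesi) argument, but as sketched it does not yet justify that the residue lies exactly in $\mathrm{span}\{\bGa_k^{-r}\}$ in the presence of the truncation $1(\calE_k)$; the paper sidesteps this by noting the indicator is invariant under the relevant orthogonal conjugation.
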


Let $\calE_{\rm agg}$ be the intersection of
$\calE_{(1)}\cap\calE_{(2)}$ and the two conditional concentration events
in Lemma~\ref{lemma:general}.  Conditionally on $\calG$,
\begin{align*}
\PP(\calE_{\rm agg}^c\mid\calG)
\leq C\sum_{k=1}^K e^{-n\wedge d_k}+Cn^{-10}.
\end{align*}
On $\calE_{\rm agg}$, the truncated sums equal their untruncated
counterparts, and \eqref{eq:sumkwkDeltak} together with
\eqref{theta-average} gives
\begin{align*}
    \op{\bDelta}\leq c_1\theta
\end{align*}
for an absolute $c_1<1/8$, after choosing $C_0$ sufficiently large.  Here $\theta$ is the minimal nonzero eigenvalue of $\I - \sum_{k=1}^{K} w_k \bar\U_k\bar\U_k^\top$.
Then from \cite[Theorem 1]{xia2021normal}, we can expand $\hat\U\hat\U^\top - \U\U^\top$ as follows ($\bDelta$ defined in \eqref{eq:delta}):
\begin{align*}
    &\quad  \U\U^\top-\hat\U\hat\U^\top =(\I - \hat\U\hat\U^\top) - (\I - \U\U^\top)\\
    &= \sum_{l\geq 1}\sum_{\s\in\SS_l}(-1)^{\lzero{\s}+1}\M(s_1)\bLa^{-s_1}\underline{\M(s_1)^\top\bDelta\M(s_2)}\cdots\underline{\M(s_l)^\top\bDelta\M(s_{l+1})}\bLa^{-s_{l+1}}\M(s_{l+1})^\top,
\end{align*}
where $\M(0) = \U$, $\M(s) = \U_{\perp}$ for $s>0$,
$\SS_l = \big\{\s = (s_1,\cdots,s_{l+1})\in\NN_0^{l+1}: s_1 + \cdots + s_{l+1} = l\big\}$, where $\NN_0=\{0,1,2,\ldots\}$,
and $\bLa^{-0} = \I_r$ with slight abuse of notation.

On $\calE_{\rm agg}$,
\begin{align*}
    \theta^{-1}\max_{s_1,s_2}\op{\M(s_1)^\top \bDelta\M(s_2)}\leq c_1.
\end{align*}

Then for each $l\geq 1, \s\in\SS_l$, $$\M(s_1)\bLa^{-s_1}\underline{\M(s_1)^\top\bDelta\M(s_2)}\cdots\underline{\M(s_l)^\top\bDelta\M(s_{l+1})}\bLa^{-s_{l+1}}\M(s_{l+1})^\top$$ has the following property. Since $\s\in\SS_l$, the sequence contains at least one zero and at least one positive entry. Hence there exists an adjacent pair $(s_{l_0},s_{l_0+1})$ with exactly one entry equal to zero, which guarantees the presence of a cross block of the form $\U^\top\bDelta\U_{\perp}\bLa^{-1}$ or its transpose. So

\begin{align*}
    &\quad \op{\M(s_1)\bLa^{-s_1}\underline{\M(s_1)^\top\bDelta\M(s_2)}\cdots\underline{\M(s_l)^\top\bDelta\M(s_{l+1})}\bLa^{-s_{l+1}}\M(s_{l+1})^\top}\\
&\leq\op{\U^\top\bDelta\U_{\perp}\bLa^{-1}}\bigg(\max_{s_1,s_2}\op{\M(s_1)^\top \bDelta\M(s_2)}\theta^{-1}\bigg)^{l-1}\\
    &\leq c_1^{l-1}\op{\U^\top\bDelta\U_{\perp}\bLa^{-1}},\qquad\text{on }\calE_{\rm agg}.
\end{align*}

And we can apply the upper bound in Lemma \ref{lemma:general}. Together with the fact that $|\SS_l|\leq 4^l$, we conclude

\begin{align*}
    \op{\hat\U\hat\U^\top - \U\U^\top}&\lesssim \sqrt{\sum_{k=1}^{K}n^{-1}w_k^2 \big(\tr(\M_k)+\bar r_k\op{\M_k}\big)\varepsilon_k^{2}}\sqrt{\log (n)}\\
&\hspace{-2.5cm}+\sqrt{\sum_{k=1}^{K}w_k^2\varepsilon_k^{4}\theta^{-2}}\sqrt{\log n}+C\op{\sum_{k=1}^Kw_k\EE_{\calG}(\R_k\cdot 1(\calE_k))},
\qquad\text{on }\calE_{\rm agg}.
\end{align*}

Together with the probability bound for $\calE_{\rm agg}$, this proves the asserted bound conditionally on $\calG$ with probability at least $1-C\sum_{k=1}^{K}e^{-n\wedge d_k}-Cn^{-10}$. The constants are uniform in the conditioned signal quantities, so averaging over $\calG$ gives the stated probability bound.

\subsection{Proof of Proposition \ref{prop:lb-ajive}}\label{pf-prop:lb-ajive}

{We use the notation introduced in \Cref{subsec:rank-one-limitation}. Set
$\S:=K^{-1}\sum_{k=1}^{K}\u_k\u_k^\top$ and write
$\I-\u\u^\top-\S=\U_\perp\bLa\U_\perp^\top$. All expectations below are
over the Gaussian noise blocks. Let $\calE_k$ be the local good event in
\eqref{eq:local-good-event}. Define
\begin{equation}
    \bDelta_k:=\bar\U_k\bar\U_k^\top-
    \widetilde\U_k\widetilde\U_k^\top,
    \qquad
    \bDelta:=\frac1K\sum_{k=1}^{K}\bDelta_k,
    \label{eq:lb-Delta}
\end{equation}
and let
\begin{equation*}
    \widehat\M:=\frac1K\sum_{k=1}^{K}
    \widetilde\U_k\widetilde\U_k^\top.
\end{equation*}
Introduce the outer concentration event
\begin{equation*}
    \calE_\Delta
    :=\left\{\op{\bDelta}\leq
    C_\Delta\left(\varepsilon\sqrt{\frac{\log n}{K}}+\varepsilon^2\right)\right\},
    \qquad
    \calE_{\rm out}:=\calE_\Delta\cap\bigcap_{k=1}^{K}\calE_k,
\end{equation*}
where $C_\Delta>0$ is universal. Let
$\calE_*:=\bigcap_{k=1}^{K}\calE_k$. On $\calE_*$,
\begin{equation*}
    \bDelta=\frac1K\sum_{k=1}^{K}\bDelta_k1(\calE_k).
\end{equation*}
Hence the equal-weight specialization of \eqref{eq:sumkwkDeltak}, together
with $\varepsilon_k\asymp\varepsilon$, gives, for $C_\Delta$ sufficiently
large,
\begin{equation*}
    \PP(\calE_\Delta^c\cap\calE_*)\leq Cn^{-10}.
\end{equation*}
The local Gaussian bounds also give
$\PP(\calE_*^c)\leq CK\exp\{-cn\}$. Therefore, after decreasing the
universal constant $c_2$ if necessary,
\begin{equation}
    \PP(\calE_{\rm out}^c)
    \leq C\left\{K\exp\{-c_2n\}+n^{-10}\right\}.
    \label{eq:lb-global-good-prob}
\end{equation}
By \eqref{eq:lb-smallness}, $\op{\bDelta}\leq c\theta$ on
$\calE_{\rm out}$ after decreasing $c_1$ if necessary.

Define the projected bias
\begin{equation}
    \calL
    :=\u^\top\brac{\EE(\hat\u\hat\u^\top)-\u\u^\top}\U_\perp.
    \label{eq:lb-target}
\end{equation}
Because $\I-\widehat\M=(\I-\u\u^\top-\S)+\bDelta$, the first-order term in
the bottom-eigenspace projector expansion has a minus sign. Thus, on
$\calE_{\rm out}$,
\begin{equation}
    \u^\top(\hat\u\hat\u^\top-\u\u^\top)\U_\perp
    =-\u^\top\bDelta\U_\perp\bLa^{-1}+\mathfrak R_{\rm out}.
    \label{eq:lb-outer-expansion}
\end{equation}
Set
\begin{align}
    \calR_{\rm out}
    &:=\EE\ebrac{\mathfrak R_{\rm out}\,1(\calE_{\rm out})},
    \notag\\
    \calR_{\rm out,tail}
    &:=\EE\sqbrac{\brac{
      \u^\top(\hat\u\hat\u^\top-\u\u^\top)\U_\perp
      +\u^\top\bDelta\U_\perp\bLa^{-1}}
      1(\calE_{\rm out}^c)}.
    \label{eq:lb-Rout-Rtail-def}
\end{align}
This definition gives the exact identity
\begin{equation}
    \calL=-\u^\top\EE\bDelta\U_\perp\bLa^{-1}
    +\calR_{\rm out}+\calR_{\rm out,tail}.
    \label{eq:lb-outer}
\end{equation}

As in the upper-bound proof, write
\begin{align}
    \u^\top\bDelta_k\U_\perp\bLa^{-1}
    &=\u^\top\bar\U_k\bar\U_k^\top\bDelta_k
      \bar\U_k\bar\U_k^\top\U_\perp\bLa^{-1}
      +\u^\top\bar\U_k\bar\U_k^\top\bDelta_k
      \bar\U_{k\perp}\bar\U_{k\perp}^\top\U_\perp\bLa^{-1}
      \notag\\
    &:=\R_k+\Q_k.
    \label{eq:lb-Rk}
\end{align}
Conditionally on $\G_k=\bar\U_k^\top\E_k$, set
$\J_k:=\bar\U_k\bar\U_k^\top-
\bar\U_{k\perp}\bar\U_{k\perp}^\top$. Under
$\H_k=\bar\U_{k\perp}^\top\E_k\mapsto-\H_k$, equivariance of the local
spectral projector gives
$\bDelta_k(\G_k,-\H_k)=\J_k\bDelta_k(\G_k,\H_k)\J_k$.
Thus $\Q_k$ changes sign, whereas its conditional law is invariant. Hence
$\EE\Q_k=0$ without any
truncation, so the linear deterministic bias is exactly
$K^{-1}\sum_{k=1}^{K}\EE\R_k$.}

{Let $\SS_l$ be the index set in the local projector expansion, let
$\S_{k,l}(\s)$ be the order-$l$ term in the expansion of $\bDelta_k$, and set
\[
    \SS_l^{\rm ss}:=\{\s\in\SS_l:s_1>0,\ s_{l+1}>0\}.
\]
On $\calE_k$ the local series is
\begin{equation}
    \bDelta_k=\sum_{l\geq1}\sum_{\s\in\SS_l}\S_{k,l}(\s).
    \label{eq:lb-local-expansion}
\end{equation}
Only the indices in $\SS_l^{\rm ss}$ contribute after left and right multiplication by $\bar\U_k^\top$ and $\bar\U_k$, respectively.
At order two, the only such index is $\s=(1,0,1)$.  With the convention
$\bDelta_k=\bar\U_k\bar\U_k^\top-\widetilde\U_k\widetilde\U_k^\top$, its coefficient is positive; hence the matrix obtained after left and right multiplication by $\bar\U_k^\top$ and $\bar\U_k$ is exactly the positive semidefinite product displayed below.
Let
\begin{equation*}
    \X_k:=\mat{\v_k&\w_k}\in\RR^{d_k\times2},
    \qquad
    \G_k:=\bar\U_k^\top\E_k,
    \qquad
    \H_k:=\bar\U_{k\perp}^\top\E_k.
\end{equation*}
The part containing exactly two noise factors in this order-two matrix is
\begin{equation}
    \T_{k,{\rm lead}}
    :=\bGa_k^{-1}\X_k^\top\H_k^\top\H_k\X_k\bGa_k^{-1}.
    \label{eq:lb-S2-explicit}
\end{equation}
We therefore define, for every realization of the noise,
\begin{equation}
    \R_{k,{\rm lead}}
    :=\u^\top\bar\U_k\T_{k,{\rm lead}}
      \bar\U_k^\top\U_\perp\bLa^{-1},
    \label{eq:lb-Rlead-S}
\end{equation}
and
\begin{equation}
    \R_{k,{\rm rem}}:=\R_k-\R_{k,{\rm lead}},
    \qquad
    \R_k=\R_{k,{\rm lead}}+\R_{k,{\rm rem}}.
    \label{eq:lb-R-decomp}
\end{equation}
This definition is global and hence does not discard the complements of the
good events. To identify the terms contained in $\R_{k,{\rm rem}}$, note
that the complete order-two matrix obtained after compression by $\bar\U_k$ equals
\begin{equation*}
    \bGa_k^{-1}(\X_k^\top+\G_k)\H_k^\top\H_k
    (\X_k+\G_k^\top)\bGa_k^{-1}.
\end{equation*}
Consequently, the part of this block having noise degree at least three is
\begin{equation}
\begin{aligned}
    \T_{k,2+}:=\bGa_k^{-1}\big(&
       \X_k^\top\H_k^\top\H_k\G_k^\top
       +\G_k\H_k^\top\H_k\X_k\\
       &+\G_k\H_k^\top\H_k\G_k^\top\big)\bGa_k^{-1}.
\end{aligned}
\label{eq:lb-S2-higher-noise}
\end{equation}
On $\calE_k$, the exact remainder can thus be written as
\begin{equation}
\begin{aligned}
    \R_{k,{\rm rem}}
    =\u^\top\bar\U_k\Bigg\{
       \T_{k,2+}
       +\sum_{l\geq3}\sum_{\s\in\SS_l^{\rm ss}}
       \bar\U_k^\top\S_{k,l}(\s)\bar\U_k
       \Bigg\}\bar\U_k^\top\U_\perp\bLa^{-1}.
\end{aligned}
\label{eq:lb-Rrem-S}
\end{equation}
In particular, the cubic and quartic terms involving $\G_k$ are part of the
remainder rather than of the leading coefficient. Their full expectation
contains the quartic matrix $(n-2)d_k\sigma^4\bGa_k^{-2}$, which is of
order $\varepsilon^4$ under $d_k\asymp n$ and is controlled below.}

Define the deterministic geometry factor
\begin{equation}
    \B_K:=\frac1K\sum_{k=1}^{K}\u^\top\bar\U_k\bGa_k^{-1}\bar\U_k^\top \U_\perp\bLa^{-1}.
    \label{eq:lb-BK}
\end{equation}
The following lemmas give the leading coefficient, the deterministic geometry lower bound, and the local and outer remainder controls.

\begin{lemma}[Rank-one local leading coefficient]\label{lem:lb-coeff}
In the rank-one homogeneous Gaussian-noise regime of \Cref{subsec:rank-one-limitation},
\begin{equation}
    \frac1K\sum_{k=1}^{K}\EE \R_{k,{\rm lead}}=(n-2)\sigma^2\B_K.
    \label{eq:lb-coeff-exact}
\end{equation}
\end{lemma}

\begin{proof}
By \eqref{eq:lb-Rlead-S} and \eqref{eq:lb-S2-explicit},
\begin{equation*}
    \EE \R_{k,{\rm lead}}
    =\u^\top\bar\U_k\bGa_k^{-1}\mat{\v_k&\w_k}^\top\EE(\H_k^\top \H_k)\mat{\v_k&\w_k}\bGa_k^{-1}\bar\U_k^\top \U_\perp\bLa^{-1}.
\end{equation*}
Since $\bar\U_{k\perp}$ has $n-2$ columns and $\E_k$ has i.i.d. $N(0,\sigma^2)$ entries, $\EE(\H_k^\top \H_k)=(n-2)\sigma^2\I_{d_k}$. Using $\mat{\v_k&\w_k}^\top\mat{\v_k&\w_k}=\bGa_k$, we obtain
\begin{equation*}
    \EE \R_{k,{\rm lead}}=(n-2)\sigma^2\u^\top\bar\U_k\bGa_k^{-1}\bar\U_k^\top \U_\perp\bLa^{-1}.
\end{equation*}
Averaging over $k$ gives \eqref{eq:lb-coeff-exact}.
\end{proof}

\begin{lemma}[Geometry lower bound]\label{lem:lb-geometry}
Under Assumptions \ref{ass:lb-loading} and \ref{ass:lb-majority},
\begin{equation}
    \op{\frac1K\sum_{k=1}^{K}\u^\top\bar\U_k\bGa_k^{-1}\bar\U_k^\top \U_\perp\bLa^{-1}}
    \geq 2\gamma\frac{\delta}{C(1+\delta)}\frac{1-\theta}{\theta}\lambda^{-2}.
    \label{eq:lb-geom}
\end{equation}
\end{lemma}

\begin{proof}
The sign convention in Assumption \ref{ass:lb-majority} is without loss of generality: the rank-one individual term $\u_k\w_k^\top$ is unchanged under the simultaneous sign flip $(\u_k,\w_k)\mapsto(-\u_k,-\w_k)$.
Let $\x:=\U_\perp^\top\z$. Since $\z\perp\u$ and $\S\z=(1-\theta)\z$,
\begin{equation*}
    (\I-\u\u^\top-\S)\z=\theta\z,
    \qquad
    \U_\perp\bLa^{-1}\x=\theta^{-1}\z,
    \qquad
    \op{\x}=1.
\end{equation*}
Write $g_k:=[\bGa_k^{-1}]_{12}$. Then $\u^\top\bar\U_k\bGa_k^{-1}\bar\U_k^\top \U_\perp\bLa^{-1}=g_k\u_k^\top \U_\perp\bLa^{-1}$, and hence
\begin{equation*}
    \op{\frac1K\sum_{k=1}^{K}\u^\top\bar\U_k\bGa_k^{-1}\bar\U_k^\top \U_\perp\bLa^{-1}}
    \geq \theta^{-1}\ab{\frac1K\sum_{k=1}^{K}g_k\inp{\u_k}{\z}}.
\end{equation*}
In the rank-one case,
\begin{equation*}
    g_k=-\frac{\v_k^\top\w_k}{\op{\v_k}^2\op{\w_k}^2-(\v_k^\top\w_k)^2}
    =-\frac{\delta s_k}{\op{\v_k}\op{\w_k}(1-\delta^2)}.
\end{equation*}
Thus $g_k\inp{\u_k}{\z}=-\delta(1-\delta^2)^{-1}s_km_k$.
Assumption~\ref{ass:lb-majority} gives
\begin{align*}
\left|\sum_{k=1}^Ks_km_k\right|
&=\left|\tau\brac{
2\sum_{k:\tau s_k=1}m_k-\sum_{k=1}^Km_k
}\right|\geq2\gamma\sum_{k=1}^Km_k.
\end{align*}
Since
$|g_k|a_k=\delta(1-\delta^2)^{-1}m_k$, it follows that
\begin{align*}
\left|\sum_{k=1}^Kg_k\inp{\u_k}{\z}\right|
\geq2\gamma\sum_{k=1}^K|g_k|a_k.
\end{align*}
For the uniform scale bound, the smallest eigenvalue of $\bGa_k$ satisfies
\begin{align*}
\lambda^2=\lambda_{\min}(\bGa_k)
\geq(1-\delta)\min\{\op{\v_k}^2,\op{\w_k}^2\}.
\end{align*}
Together with
$C^{-1}\leq\op{\v_k}/\op{\w_k}\leq C$, this gives
\begin{align*}
\op{\v_k}\op{\w_k}
&\leq C\min\{\op{\v_k}^2,\op{\w_k}^2\}
\leq \frac{C}{1-\delta}\lambda^2,
\end{align*}
and therefore
\begin{align*}
|g_k|
=\frac{\delta}{\op{\v_k}\op{\w_k}(1-\delta^2)}
\geq \frac{\delta}{C(1+\delta)}\lambda^{-2}.
\end{align*}
Since $0\leq a_k\leq1$ and $a_k\geq a_k^2$,
\begin{align*}
    \ab{\frac1K\sum_{k=1}^{K}g_k\inp{\u_k}{\z}}
    &\geq 2\gamma\frac1K\sum_{k=1}^{K}|g_k|a_k^2 \\
    &\geq 2\gamma\frac{\delta}{C(1+\delta)}\lambda^{-2}
    \frac1K\sum_{k=1}^{K}\inp{\u_k}{\z}^2.
\end{align*}
Finally, $K^{-1}\sum_{k=1}^{K}\inp{\u_k}{\z}^2=\z^\top\S\z=1-\theta$. Combining the displays proves the claim.
\end{proof}

{
\begin{lemma}[Local high-order control]\label{lem:lb-local-high-order}
Under the homogeneous scaling, \eqref{eq:lb-smallness}, and
Assumption~\ref{ass:lb-loading},
\begin{equation}
    \op{\frac1K\sum_{k=1}^{K}
    \EE\ebrac{\R_{k,{\rm rem}}\,1(\calE_k)}}
    \leq C\theta^{-1}\varepsilon^3.
    \label{eq:lb-local-high-order}
\end{equation}
\end{lemma}

\begin{proof}
Because the loading angle and scale ratio are fixed and
$\lambda_{k,\min}=\lambda$, Assumption~\ref{ass:lb-loading} implies that the
condition number of $\X_k$ is bounded uniformly in $k$. The bounds defining
$\calE_k$, together with $d_k\asymp n$, therefore give
\begin{equation*}
    \op{\bGa_k^{-1}\X_k^\top\H_k^\top}
       \leq C\frac{\sqrt n\sigma}{\lambda}=C\varepsilon,
    \qquad
    \op{\bGa_k^{-1}\G_k\H_k^\top}
       \leq C\frac{n\sigma^2}{\lambda^2}=C\varepsilon^2.
\end{equation*}
Applying these two bounds to the three summands in
\eqref{eq:lb-S2-higher-noise} gives
\begin{equation*}
    \op{\T_{k,2+}}\leq C(\varepsilon^3+\varepsilon^4).
\end{equation*}
For completeness, every order-$l$ term in the local projector expansion
is a product of $l$ normalized perturbation blocks, each bounded by
$C\varepsilon$ on $\calE_k$. Since $|\SS_l^{\rm ss}|\leq4^l$, this gives
\begin{equation*}
    \op{\sum_{\s\in\SS_l^{\rm ss}}
    \bar\U_k^\top\S_{k,l}(\s)\bar\U_k}
    \leq (C\varepsilon)^l,
    \qquad l\geq3,
\end{equation*}
where $|\SS_l|\leq4^l$ has been absorbed into $C^l$. Since
$C\varepsilon<1$ after choosing $c_1$ sufficiently small, the local series
converges absolutely and uniformly on $\calE_k$. Since
$\op{\bLa^{-1}}\leq\theta^{-1}$, \eqref{eq:lb-Rrem-S} yields
\begin{equation*}
    \op{\R_{k,{\rm rem}}\,1(\calE_k)}
    \leq C\theta^{-1}\left(\varepsilon^3+
    \sum_{l\geq3}(C\varepsilon)^l\right)
    \leq C\theta^{-1}\varepsilon^3.
\end{equation*}
The last inequality follows from \eqref{eq:lb-smallness} after choosing $c_1$
sufficiently small. Averaging and taking expectations proves the claim.
\end{proof}
}

{
\begin{lemma}[Outer high-order control]\label{lem:lb-outer}
Under the homogeneous scaling and \eqref{eq:lb-smallness},
\begin{equation}
    \op{\calR_{\rm out}}
    \leq C_{\rm out}\theta^{-2}\left(
       \frac{\varepsilon^2\log n}{K}+\varepsilon^4\right).
    \label{eq:lb-outer-rem}
\end{equation}
\end{lemma}

\begin{proof}
On $\calE_{\rm out}$, the spectral projector expansion and
\eqref{eq:lb-outer-expansion} give
\begin{equation*}
    \op{\mathfrak R_{\rm out}}
    \leq C\theta^{-2}\op{\bDelta}^2.
\end{equation*}
Using the definition of $\calE_\Delta$ and
$(a+b)^2\leq2a^2+2b^2$ therefore yields
\begin{equation*}
    \op{\calR_{\rm out}}
    \leq C\theta^{-2}\EE\ebrac{\op{\bDelta}^2
      1(\calE_{\rm out})}
    \leq C_{\rm out}\theta^{-2}\left(
       \frac{\varepsilon^2\log n}{K}+\varepsilon^4\right).
\end{equation*}
\end{proof}

Define the local tail contribution
\begin{equation*}
    \calR_{\rm loc,tail}
    :=\frac1K\sum_{k=1}^{K}
      \EE\ebrac{\R_{k,{\rm rem}}\,1(\calE_k^c)}.
\end{equation*}
\begin{lemma}[Good-event complements]\label{lem:lb-tails}
Under the conditions of Proposition~\ref{prop:lb-ajive},
\begin{equation}
    \op{\calR_{\rm loc,tail}}+
    \op{\calR_{\rm out,tail}}
    \leq C\theta^{-1}\left\{K\exp\{-c_2n\}+n^{-10}\right\}.
    \label{eq:lb-tail-bound}
\end{equation}
\end{lemma}

\begin{proof}
First, $\op{\R_k}\leq\theta^{-1}$ because it is a deterministic compression
of a difference of two orthogonal projectors followed by $\bLa^{-1}$.
Moreover, the Gaussian fourth-moment bound applied to
\eqref{eq:lb-S2-explicit} gives
\begin{equation*}
    \left(\EE\op{\R_{k,{\rm lead}}}^2\right)^{1/2}
    \leq C\theta^{-1}\varepsilon^2.
\end{equation*}
Indeed,
$\H_k\X_k\bGa_k^{-1/2}\stackrel{d}{=}\sigma\Z_{k,0}$ for a matrix
$\Z_{k,0}\in\RR^{(n-2)\times2}$ with i.i.d. standard Gaussian entries, and
$\EE\op{\Z_{k,0}^\top\Z_{k,0}}^2\leq Cn^2$.
Since $\PP(\calE_k^c)\leq C\exp\{-cn\}$ under $d_k\asymp n$, the identity
$\R_{k,{\rm rem}}=\R_k-\R_{k,{\rm lead}}$ and Cauchy--Schwarz imply, after
decreasing the universal exponent $c_2$ if necessary,
\begin{align*}
    \EE\ebrac{\op{\R_{k,{\rm lead}}}1(\calE_k^c)}
    &\leq
    \brac{\EE\op{\R_{k,{\rm lead}}}^2}^{1/2}
    \PP(\calE_k^c)^{1/2}\\
    &\leq C\theta^{-1}\varepsilon^2\exp\{-cn/2\},\\
    \EE\ebrac{\op{\R_k}1(\calE_k^c)}
    &\leq\theta^{-1}\PP(\calE_k^c).
\end{align*}
Consequently,
\begin{equation*}
    \op{\calR_{\rm loc,tail}}\leq C\theta^{-1}\left\{K\exp\{-c_2n\}+n^{-10}\right\}.
\end{equation*}
For the outer tail, both projector differences and $\bDelta$, which is an
average of projector differences, have operator norm at most one. Hence
\eqref{eq:lb-Rout-Rtail-def} and \eqref{eq:lb-global-good-prob} give
\begin{equation*}
    \op{\calR_{\rm out,tail}}
    \leq(1+\theta^{-1})\PP(\calE_{\rm out}^c)
    \leq C\theta^{-1}\left\{K\exp\{-c_2n\}+n^{-10}\right\}.
\end{equation*}
\end{proof}
}

{
\begin{proof}[Proof of \Cref{prop:lb-ajive}]
Set
\begin{equation*}
    \calR_{\rm loc}:=\frac1K\sum_{k=1}^{K}
      \EE\ebrac{\R_{k,{\rm rem}}\,1(\calE_k)}.
\end{equation*}
The exact outer identity \eqref{eq:lb-outer}, the fact that $\EE\Q_k=0$,
and the global decomposition
$\R_{k,{\rm rem}}=\R_k-\R_{k,{\rm lead}}$ give
\begin{equation}
    \calL
    =-\frac1K\sum_{k=1}^{K}\EE\R_{k,{\rm lead}}
      -\calR_{\rm loc}-\calR_{\rm loc,tail}
      +\calR_{\rm out}+\calR_{\rm out,tail}.
    \label{eq:lb-exact-final-decomp}
\end{equation}
Thus no local or outer bad-event contribution is omitted.

By Lemmas~\ref{lem:lb-coeff}--\ref{lem:lb-geometry},
\begin{equation*}
    \op{\frac1K\sum_{k=1}^{K}\EE\R_{k,{\rm lead}}}
    \geq \frac{n-2}{n}\,
    \frac{2\gamma\delta}{C(1+\delta)}
    \frac{1-\theta}{\theta}\,\varepsilon^2.
\end{equation*}
Because $c_0\leq\theta\leq1-c_0$ and $n$ is sufficiently large, the
coefficient on the right is bounded below by a positive constant depending
only on $c_0,\delta,\gamma$ and $C$. Combining
\eqref{eq:lb-exact-final-decomp} with
Lemmas~\ref{lem:lb-local-high-order}, \ref{lem:lb-outer}, and
\ref{lem:lb-tails}, we obtain
\begin{align*}
    \op{\calL}
    \geq{}& \frac{n-2}{n}\,
    \frac{2\gamma\delta}{C(1+\delta)}
    \frac{1-\theta}{\theta}\,\varepsilon^2
    -C\theta^{-1}\varepsilon^3\\
    &\quad-C_{\rm out}\theta^{-2}\left(
       \frac{\varepsilon^2\log n}{K}+\varepsilon^4\right)
    -C\theta^{-1}\left\{K\exp\{-c_2n\}+n^{-10}\right\}.
\end{align*}
Choose $C_1$ in \eqref{eq:lb-smallness} sufficiently large and $c_1$
sufficiently small, in that order, depending only on
$c_0,\delta,\gamma$ and $C$. Then $K\geq C_1\log n$ absorbs the
$\varepsilon^2\log n/K$ term, the small-perturbation condition absorbs the
$\varepsilon^3$ and $\varepsilon^4$ terms, and
the last condition in \eqref{eq:lb-smallness} absorbs the two tail contributions.
Consequently, $\op{\calL}\geq c\varepsilon^2$. Finally,
$\op{\EE(\hat\u\hat\u^\top)-\u\u^\top}\geq\op{\calL}$, which proves the second inequality in the statement. The first inequality follows from Jensen's inequality.
\end{proof}
}

\subsection{Proof of Proposition \ref{prop:random-Uk}}\label{pf-prop:random-Uk}
    We first show with high probability, $\theta$ is lower bounded by $1/2$. 
    Recall $\theta = 1-  K^{-1}\op{\sum_{k=1}^K \U_k\U_k^\top}$. From the way $\U_k$ is generated, we have $\EE \U_k\U_k^\top = \frac{r_k}{n-r}(\I-\U\U^\top)$ in the ambient space. On $\operatorname{span}(\U)^\perp$, this is equivalent to $\frac{r_k}{n-r}\I_{n-r}$. And $\op{\U_k\U_k^\top - \frac{r_k}{n-r}(\I-\U\U^\top)}\leq 1$. On the other hand, on $\operatorname{span}(\U)^\perp$, $\op{\EE(\U_k\U_k^\top - \frac{r_k}{n-r}(\I-\U\U^\top))^2} \leq r_k/(n-r)$. Using matrix Bernstein inequality on $\operatorname{span}(\U)^\perp$, we obtain with probability exceeding $1-n^{-10}$, 
    \begin{align*}
        \op{\sum_{k=1}^K \frac{1}{K}\U_k\U_k^\top} \leq \frac{r_{\rm avg}}{n-r} + C\sqrt{\frac{r_{\rm avg}\log n}{(n-r)K}} + C\frac{\log n }{K}\leq 1/2,
    \end{align*}
    where in the last inequality we use $n\geq 4r_{\rm avg}+r$, $K\geq C\log n$.
    {Combining this event with Theorem~\ref{thm:equal-weight-random}, and setting $w_k = 1/K$, gives, with probability exceeding
    $1-O\bbrac{Ke^{-n\wedge d}}-\sum_{k=1}^Kp_k-O\bbrac{n^{-10}}$, the upper bound}
    The theorem's second-order term is
    $\varepsilon^2\sqrt{\log n/K}$; it is absorbed by the first-order term below because the standing perturbative condition gives $\varepsilon\leq c$ and $\bar r\geq1$.
    \begin{align*}
    \op{\hat\U\hat\U^\top - \U\U^\top} &\lesssim \sqrt{\frac{\bar r\log n}{K}} \varepsilon.
\end{align*}

\subsection{Proof of Proposition \ref{prop:constant-gap-explicit-weight}}
\label{pf-prop:constant-gap-explicit-weight}
\begin{proof}
Fix $\w\in\mathcal D$. On $\operatorname{span}(\U)^\perp$, the nonzero
eigenvalues of
\begin{align*}
\I-\sum_{j=1}^K w_j\bar\U_j\bar\U_j^\top
=
\I-\U\U^\top-\sum_{j=1}^K w_j\U_j\U_j^\top
\end{align*}
lie in $[\theta(\w),1]$. Since
$\operatorname{span}(\bar\U_k)^\perp\subseteq\operatorname{span}(\U)^\perp$,
the matrix $\M_k(\w)$ is a compression of the inverse square of this operator.
Therefore,
\begin{align*}
\I_{n-\bar r_k}
\preceq \M_k(\w)
\preceq \theta(\w)^{-2}\I_{n-\bar r_k}.
\end{align*}
Using $(n-\bar r_k)+\bar r_k=n$ in the definition of $q_k(\w)$ gives
\begin{align*}
1\leq q_k(\w)\leq\theta(\w)^{-2},
\qquad k\in[K].
\end{align*}
Define $\Phi(\w):=\sum_{k=1}^K\xi_k w_k^2$.
Since $\theta(\w)\leq1$, the preceding bounds imply
\begin{align*}
\Phi(\w)\leq J(\w)\leq\theta(\w)^{-2}\Phi(\w),
\qquad \w\in\mathcal D.
\end{align*}
By Cauchy--Schwarz,
\begin{align*}
1
=\left(\sum_{k=1}^K w_k\right)^2
\leq
\left(\sum_{k=1}^K\xi_k w_k^2\right)
\left(\sum_{k=1}^K\xi_k^{-1}\right),
\end{align*}
with equality at the weight vector $\w^\circ$ in
\eqref{eq:constant-gap-explicit-weight}. Hence
\begin{align*}
\Phi(\w^\circ)
=\min_{\w\in\Delta_K}\Phi(\w)
=\left(\sum_{k=1}^K\xi_k^{-1}\right)^{-1}.
\end{align*}
Using the preceding sandwich bound, we obtain
\begin{align*}
J(\w^\circ)
&\leq \theta(\w^\circ)^{-2}\Phi(\w^\circ)
=\theta(\w^\circ)^{-2}\min_{\w\in\Delta_K}\Phi(\w)\leq \theta(\w^\circ)^{-2}\inf_{\w\in\Delta_K}J(\w),
\end{align*}
where $J(\w)=+\infty$ on $\Delta_K\setminus\mathcal D$. If $\theta(\w^\circ)\geq c_0$, this further yields $J(\w^\circ)\leq c_0^{-2}J^\ast$.This proves the claim.
\end{proof}

\subsection{Optional oracle fixed-point refinement}\label{app:oracle-refinement}

For completeness, we record the oracle analogue of the optional refinement in
\Cref{subsec:optional-refinement}. Suppose that
$\{\varepsilon_k\}_{k=1}^K$, $\theta(\cdot)$, and
$\{\M_k(\cdot)\}_{k=1}^K$ are known, with $\varepsilon_k>0$. For
$\w\in\mathcal D$, define
\begin{align}\label{eq:c-def}
c_k(\w)
:=\varepsilon_k^4\theta(\w)^{-2}
+n^{-1}\varepsilon_k^2
\left[\tr\{\M_k(\w)\}+\bar r_k\op{\M_k(\w)}\right],
\qquad k\in[K],
\end{align}
and let $T:\mathcal D\to\Delta_K^\circ$ be given by
\begin{align}\label{eq:T-def}
T_k(\w)
:=\frac{c_k(\w)^{-1}}{\sum_{j=1}^Kc_j(\w)^{-1}},
\qquad k\in[K],
\end{align}
where $\Delta_K^\circ:=\{\w\in\Delta_K:w_k>0\text{ for every }k\}$. The iteration below is understood only for as long as its iterates remain in $\mathcal D$. The fixed-point statement concerns a point in $\mathcal D$ and is unaffected by this qualification.

\begin{algorithm}[H]
\caption{Oracle geometry-adaptive refinement}
\begin{algorithmic}
\State{\textbf{Input:} $\w^{(0)}\in\mathcal D$, $\{\varepsilon_k\}_{k=1}^K$,
$\{\M_k(\cdot)\}_{k=1}^K$, $\theta(\cdot)$, and $t_{\max}$}
\For{$t=0,\ldots,t_{\max}-1$}
\State{Compute $c_k\{\w^{(t)}\}$ from \eqref{eq:c-def} for $k\in[K]$.}
\State{Set $\w^{(t+1)}=T\{\w^{(t)}\}$ using \eqref{eq:T-def}.}
\EndFor
\State{\textbf{Output:} $\w^{(t_{\max})}$}
\end{algorithmic}
\label{alg:oracle}
\end{algorithm}

\begin{proposition}[Stationarity of a regular interior fixed point]
\label{prop:oracle-stationary}
Let $\w^\dagger\in\mathcal D$ be a fixed point of $T$ with
$w_k^\dagger>0$ for $k\in[K]$. Assume that (1) $\theta(\w^\dagger)>0$,
(2) all positive eigenvalues of
$\sum_{k=1}^Kw_k^\dagger\U_k\U_k^\top$ are simple, and (3)
$\lambda_{\max}\{\M_k(\w^\dagger)\}$ is simple for every $k\in[K]$.
Then $\w^\dagger$ is a $2L(\theta_0)$-stationary point of $J$ under the
simplex constraint in the sense that
\begin{align*}
\op{\operatorname{Proj}_{\mathcal T}\{\nabla J(\w^\dagger)\}}_\infty
&\leq2L(\theta_0),
\qquad
L(\theta_0)
:=4\theta_0^{-3}\max_{k\in[K]}
\left\{\varepsilon_k^4+3n^{-1}\varepsilon_k^2\bar r\right\},
\end{align*}
where $\theta_0:=\theta(\w^\dagger)/2$,
$\mathcal T:=\{\v\in\RR^K:\mathbf 1^\top\v=0\}$, and
\begin{align*}
\operatorname{Proj}_{\mathcal T}(\g)
:=\g-\mathbf 1\mathbf 1^\top\g/K.
\end{align*}
\end{proposition}

The proposition is conditional on the existence of a sufficiently regular interior fixed
point. It does not provide a convergence or global-optimality guarantee for the iteration.
When $L(\theta_0)=o(1)$, its conclusion says that such a fixed point has a small
projected gradient.

Define an $\ell_1$ ball around $\w^\dagger$ as
\begin{align*}
\Omega_{\theta_0}(\w^\dagger)
:=\left\{\w\in\Delta_K:\op{\w-\w^\dagger}_1\leq\theta_0\right\}.
\end{align*}
We use the following two lemmas, whose proofs are given below.

\begin{lemma}\label{lem:lip}
For any $\w,\w'\in\Omega_{\theta_0}(\w^\dagger)$, we have
\begin{align*}
|c_k(\w)-c_k(\w')|
&\leq\frac{L(\theta_0)}{2}\op{\w-\w'}_1,
\qquad k\in[K].
\end{align*}
\end{lemma}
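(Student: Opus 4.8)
The plan is to bound $|c_k(\w)-c_k(\w')|$ by treating the two summands of $c_k(\w)=\varepsilon_k^4\theta(\w)^{-2}+n^{-1}\varepsilon_k^2\big(\tr(\M_k(\w))+\bar r_k\op{\M_k(\w)}\big)$ separately, after first recording the stability estimates that hold throughout $\Omega_{\theta_0}(\w^\dagger)$. Since every $\U_k\U_k^\top$ is an orthogonal projector, $\op{\sum_k(w_k-w_k')\U_k\U_k^\top}\le\op{\w-\w'}_1$ for all $\w,\w'\in\Delta_K$; hence $\w\mapsto\theta(\w)$ is $1$-Lipschitz in $\op{\cdot}_1$, so $\theta_0\le\theta(\w)\le1$ on $\Omega_{\theta_0}(\w^\dagger)$. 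Moreover, using $\bar\U_k\bar\U_k^\top=\U\U^\top+\U_k\U_k^\top$ and $\sum_kw_k=1$, the matrix $M(\w):=\I-\sum_kw_k\bar\U_k\bar\U_k^\top$ equals $\I-\U\U^\top-\sum_kw_k\U_k\U_k^\top$, so all the $M(\w)$ share the $\w$-independent range $\mathrm{span}(\U)^\perp$, their restrictions there are positive definite with spectrum in $[\theta_0,1]$, and $\op{M(\w)-M(\w')}\le\op{\w-\w'}_1$.

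For the first summand I would use that $t\mapsto t^{-2}$ is $2\theta_0^{-3}$-Lipschitz on $[\theta_0,1]$ together with the Lipschitz property of $\theta$, obtaining a contribution $\le2\varepsilon_k^4\theta_0^{-3}\op{\w-\w'}_1$. For the second, note $\M_k(\w)=\bar\U_{k\perp}^\top M(\w)^{\dagger2}\bar\U_{k\perp}$, all operators supported on $\mathrm{span}(\U)^\perp$. Since the ranges of $M(\w)$ and $M(\w')$ coincide, the resolvent identity $M(\w)^\dagger-M(\w')^\dagger=M(\w)^\dagger\big(M(\w')-M(\w)\big)M(\w')^\dagger$ holds; inserting it into $M(\w)^{\dagger2}-M(\w')^{\dagger2}=M(\w)^\dagger\big(M(\w)^\dagger-M(\w')^\dagger\big)+\big(M(\w)^\dagger-M(\w')^\dagger\big)M(\w')^\dagger$ and using the eigenvalue bounds gives $\op{M(\w)^{\dagger2}-M(\w')^{\dagger2}}\le2\theta_0^{-3}\op{\w-\w'}_1$. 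As $\bar\U_{k\perp}$ has orthonormal columns this transfers to $\op{\M_k(\w)-\M_k(\w')}\le2\theta_0^{-3}\op{\w-\w'}_1$, and the reverse triangle inequality then controls $\bar r_k\big|\op{\M_k(\w)}-\op{\M_k(\w')}\big|$.

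The bulk of the work is the trace term, where the naive bound $|\tr(\M_k(\w))-\tr(\M_k(\w'))|\le(n-\bar r_k)\op{\M_k(\w)-\M_k(\w')}$ would cost a spurious factor $n/\bar r_k$. Instead I would use $\bar\U_{k\perp}\bar\U_{k\perp}^\top=\I-\U\U^\top-\U_k\U_k^\top$ together with the support of $M(\w)^{\dagger2}$ to write $\tr(\M_k(\w))-\tr(\M_k(\w'))=\tr\big(M(\w)^{\dagger2}-M(\w')^{\dagger2}\big)-\tr\big(\U_k^\top(M(\w)^{\dagger2}-M(\w')^{\dagger2})\U_k\big)$; the last term is a trace over an $r_k$-dimensional space and is thus $\le2\theta_0^{-3}r_k\op{\w-\w'}_1$. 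For the remaining term, the Neumann expansion $M(\w)^{\dagger2}=(\I-\U\U^\top)+\sum_{m\ge1}(m+1)S(\w)^m$, with $S(\w):=\sum_kw_k\U_k\U_k^\top$ and $\op{S(\w)}=1-\theta(\w)<1$, shows $\tr\big(M(\w)^{\dagger2}-M(\w')^{\dagger2}\big)=\tr\,g(S(\w))-\tr\,g(S(\w'))$ for $g(\nu)=\nu(2-\nu)(1-\nu)^{-2}$, which satisfies $g(0)=0$ and $g'(\nu)=2(1-\nu)^{-3}\le2\theta_0^{-3}$ on $[0,1-\theta_0]$. A Lidskii-type trace inequality for Lipschitz functions of symmetric matrices then bounds this by $2\theta_0^{-3}\nuc{S(\w)-S(\w')}$, and bounding $\nuc{S(\w)-S(\w')}\le\sum_k|w_k-w_k'|r_k\lesssim\bar r_k\op{\w-\w'}_1$ completes the estimate. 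Collecting the three pieces yields $|c_k(\w)-c_k(\w')|\le2\theta_0^{-3}\big(\varepsilon_k^4+3n^{-1}\varepsilon_k^2\bar r_k\big)\op{\w-\w'}_1\le\tfrac{L(\theta_0)}{2}\op{\w-\w'}_1$.

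I expect the trace term to be the main obstacle: it is the only place where a careless estimate introduces the ambient dimension $n$, and removing it requires two ideas in tandem — reducing $\tr(M(\w)^{\dagger2})$ to the trace of a fixed Lipschitz scalar function of the comparatively low-rank matrix $S(\w)$ via the Neumann series, and then invoking a Lidskii-type trace inequality so that only the spectral window $[0,1-\theta_0]$ (hence the constant $\theta_0^{-3}$) enters. The final delicate point is to bound the nuclear norm $\nuc{S(\w)-S(\w')}$ in terms of $\bar r_k$ rather than $\sum_kr_k$, which is where comparability of the per-view ranks (or the $\max_k$ in the definition of $L(\theta_0)$) is used; everything else reduces to routine resolvent perturbation bounds.
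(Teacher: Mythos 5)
Your proposal follows the paper's skeleton --- Lipschitz continuity of $\theta(\cdot)$ via Weyl's inequality, Lipschitz continuity of $\M_k(\cdot)$ in operator norm with constant $2\theta_0^{-3}$ via a resolvent/derivative bound, then termwise assembly of $c_k$ --- but it departs genuinely on the trace term, and that is where the comparison is interesting. The paper disposes of the trace term in one line via $|\tr \M_k(\w)-\tr \M_k(\w')|\le 2\bar r_k\op{\M_k(\w)-\M_k(\w')}$, which implicitly asserts that $\M_k(\w)-\M_k(\w')$ has rank at most $2\bar r_k$; in fact that rank is only controlled by $2\,\rank\bbrac{\sum_j(w_j-w_j')\U_j\U_j^\top}\le 2\sum_j r_j$, since the weight perturbation touches all views, so the paper's constant $\bar r_k$ is not justified for general $\w,\w'$ unless the per-view ranks are comparable. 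Your alternative --- splitting off the rank-$r_k$ piece $\tr(\U_k^\top M(\w)^{\dagger 2}\U_k)$, reducing $\tr(M(\w)^{\dagger2})$ to $(n-r)+\tr\,g(S(\w))$ with $g(\nu)=\nu(2-\nu)(1-\nu)^{-2}$ via the Neumann series, and then applying the Lidskii/Mirsky trace inequality with $\nuc{S(\w)-S(\w')}\le\sum_j|w_j-w_j'|r_j$ --- is correct step by step and is a more transparent way to avoid the spurious factor $n$, but it lands on essentially the same obstruction: the resulting constant involves $\max_j r_j$ rather than $\bar r_k$, which you explicitly flag. So both arguments prove the lemma with $\bar r_k$ replaced by a global rank quantity; since $L(\theta_0)$ already carries a $\max_k$ and the lemma only feeds a constant into \Cref{prop:oracle-stationary}, this is a constant-level discrepancy, and your route is the more honest of the two. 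The remaining ingredients (the resolvent identity for pseudo-inverses sharing the range $\mathrm{span}(\U)^\perp$, the spectral localization $\theta(\w)\in[\theta_0,1]$ on $\Omega_{\theta_0}(\w^\dagger)$, and the $2\theta_0^{-3}$-Lipschitz bound for $t\mapsto t^{-2}$) all check out and coincide with the paper's.
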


\begin{lemma}\label{lem:diff}
Under the conditions of Proposition~\ref{prop:oracle-stationary}, there exists a
neighborhood $\Omega$ of $\w^\dagger$ such that
$\{c_k(\w)\}_{k=1}^K$ and $J(\w)$ are $C^1$ on $\Omega$.
\end{lemma}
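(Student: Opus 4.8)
The plan is to work in a small neighborhood of $\w^\dagger$ and to show that every scalar quantity entering $c_k(\cdot)$ and $J(\cdot)$ is a $C^1$ --- indeed real-analytic --- function of $\w$ there. Since $c_k(\w)=\varepsilon_k^{4}\,\theta(\w)^{-2}+n^{-1}\varepsilon_k^{2}\bigl(\tr(\M_k(\w))+\bar r_k\op{\M_k(\w)}\bigr)$ and $J(\w)=\sum_{k=1}^{K}w_k^{2}\,c_k(\w)$, and since $x\mapsto x^{-2}$ is $C^{\infty}$ away from $0$ while finite sums, products, and composition with fixed linear maps all preserve the $C^{1}$ class, it suffices to prove that $\w\mapsto\theta(\w)$, $\w\mapsto\tr(\M_k(\w))$ and $\w\mapsto\op{\M_k(\w)}$ are each $C^{1}$ near $\w^\dagger$, together with $\theta(\w^\dagger)>0$ so that $\theta^{-2}$ is locally well-defined. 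The structural observation driving the argument is that on $\Delta_K$ one has $\bar\U_k\bar\U_k^{\top}=\U\U^{\top}+\U_k\U_k^{\top}$ and $\sum_kw_k=1$, so $\I-\sum_kw_k\bar\U_k\bar\U_k^{\top}=(\I-\U\U^{\top})-\sum_kw_k\U_k\U_k^{\top}$ annihilates $\mathrm{span}(\U)$ and preserves $\mathrm{span}(\U)^{\perp}$; hence I would fix, once and for all, a single $\w$-independent orthonormal basis $\U_{\perp}\in\OO_{n,n-r}$ of $\mathrm{span}(\U)^{\perp}$, so that $\bLa(\w)=\U_{\perp}^{\top}\bigl(\I-\sum_kw_k\bar\U_k\bar\U_k^{\top}\bigr)\U_{\perp}=\I_{n-r}-\sum_{k=1}^{K}w_k\,\U_{\perp}^{\top}\U_k\U_k^{\top}\U_{\perp}$ becomes an \emph{affine} --- hence real-analytic --- function of $\w$.

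Two of the three ingredients are then routine. The smallest eigenvalue of $\bLa(\w^\dagger)$ equals $\theta(\w^\dagger)$, which by hypothesis (1) is positive, so $\bLa(\w^\dagger)$ is invertible; by continuity of the eigenvalues along the affine family $\w\mapsto\bLa(\w)$, there is a neighborhood $\Omega_1$ of $\w^\dagger$ on which $\bLa(\w)$ stays invertible with smallest eigenvalue at least $\theta(\w^\dagger)/2$. On $\Omega_1$ the map $\w\mapsto\bLa(\w)^{-2}$ is real-analytic (the affine map composed with $A\mapsto A^{-2}$, which is real-analytic on invertible matrices), hence so is $\M_k(\w)=\bar\U_{k\perp}^{\top}\U_{\perp}\,\bLa(\w)^{-2}\,\U_{\perp}^{\top}\bar\U_{k\perp}$, and therefore $\tr(\M_k(\w))$ is real-analytic on $\Omega_1$. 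For $\theta$, write $\theta(\w)=1-\op{\sum_kw_k\U_k\U_k^{\top}}$, where $\op{\cdot}$ of this PSD matrix is its top eigenvalue; at $\w^\dagger$ this top eigenvalue equals $1-\theta(\w^\dagger)$ and is \emph{simple} by hypothesis (2). Since a simple eigenvalue of a real-analytic symmetric-matrix family is a real-analytic function of the parameter on a neighborhood (e.g.\ by the implicit function theorem applied to the characteristic polynomial, using that a simple root is nondegenerate in $\mu$), $\theta$ is real-analytic on some $\Omega_2\subseteq\Omega_1$, and after shrinking we have $\theta(\w)\ge\theta(\w^\dagger)/2>0$ there.

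The delicate ingredient --- and what I expect to be the main obstacle --- is $\op{\M_k(\w)}$. Since $\M_k(\w)=\bigl(\bLa(\w)^{-1}\U_{\perp}^{\top}\bar\U_{k\perp}\bigr)^{\top}\bigl(\bLa(\w)^{-1}\U_{\perp}^{\top}\bar\U_{k\perp}\bigr)$ is PSD, $\op{\M_k(\w)}$ is its top eigenvalue; but the top-eigenvalue functional is $C^{1}$ only at symmetric matrices whose top eigenvalue is simple, and is genuinely non-differentiable at a repeated top eigenvalue. Hence the crux, which I expect to require the most care, is to verify that $\op{\M_k(\w^\dagger)}$ is a \emph{simple} eigenvalue of $\M_k(\w^\dagger)$ for each $k\in[K]$. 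I would establish this from hypothesis (2) by tracking the spectrum of $\bLa(\w^\dagger)^{-2}$ --- whose eigenbasis is that of $\sum_kw_k^\dagger\,\U_{\perp}^{\top}\U_k\U_k^{\top}\U_{\perp}$, with simple positive eigenvalues --- through the fixed two-sided compression by $\U_{\perp}^{\top}\bar\U_{k\perp}$, supplementing with a general-position argument on the subspaces $\{\U_k\}_{k=1}^K$ should a residual degeneracy survive the compression. Granting simplicity of the top eigenvalue of $\M_k(\w^\dagger)$, real-analytic perturbation theory once more yields a neighborhood $\Omega_3\subseteq\Omega_2$ on which every $\op{\M_k(\cdot)}$ is real-analytic. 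Taking $\Omega=\Omega_3$ (intersected over the finitely many $k$), all three ingredients --- and thus $\theta(\w)^{-2}$, each $c_k$, and $J=\sum_kw_k^2c_k$ --- are $C^1$ on $\Omega$, which is the claim.
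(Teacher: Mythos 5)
Your overall architecture is right, and your first two ingredients are handled correctly — in fact your treatment of $\tr(\M_k(\cdot))$ is cleaner than the paper's: by fixing a $\w$-independent basis $\U_\perp$ of $\mathrm{span}(\U)^\perp$ you make $\bLa(\w)$ affine in $\w$, so $\bLa(\w)^{-2}$ and hence $\M_k(\w)$ are real-analytic wherever $\bLa$ is invertible, with no spectral simplicity needed for that step. The paper instead invokes Kato's perturbation theorem (its Theorem 6.8 reference) to get $C^1$ eigenpairs of $\B(\w)=\sum_k w_k\U_k\U_k^\top$, from which it extracts both the $C^1$-ness of $\theta(\w)=1-\lambda_{\max}(\B(\w))$ (using hypothesis (2), exactly as you do) and $C^1$ maps $\w\mapsto\bLa(\w)$, $\w\mapsto\U_\perp(\w)$ leading to $C^1$-ness of $\M_k(\w)$. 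Both routes are fine up to this point, and yours needs less machinery for $\M_k$.

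The genuine gap is the one you yourself flag: the term $\op{\M_k(\w)}$ in $c_k(\w)$. You correctly observe that $\lambda_{\max}$ is $C^1$ at a symmetric matrix only when the top eigenvalue is simple, and that the proof therefore hinges on simplicity of the top eigenvalue of $\M_k(\w^\dagger)$ for every $k$. But you do not prove this, and it does not follow from the hypotheses of \Cref{prop:oracle-stationary}. Hypothesis (2) concerns the spectrum of $\sum_k w_k^\dagger\U_k\U_k^\top$; the nonzero spectrum of $\M_k(\w^\dagger)=\big(\bLa^{-1}\U_\perp^\top\bar\U_{k\perp}\big)^\top\big(\bLa^{-1}\U_\perp^\top\bar\U_{k\perp}\big)$ is that of $\bLa^{-1}\U_\perp^\top\bar\U_{k\perp}\bar\U_{k\perp}^\top\U_\perp\bLa^{-1}$, and the two-sided compression by the projection onto $\mathrm{span}(\bar\U_{k\perp})$ can easily create a degenerate top eigenvalue even when $\bLa$ has simple spectrum (e.g.\ when $\bar\U_{k\perp}^\top\U_\perp$ kills the direction carrying the would-be unique maximizer). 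A ``general-position argument'' is not available either, since the lemma is stated for arbitrary deterministic $\{\U_k\}$ satisfying only hypotheses (1) and (2). So as written your proof is incomplete at its self-identified crux. It is worth noting that the paper's own proof has exactly the same hole — it establishes $C^1$-ness of the matrix map $\w\mapsto\M_k(\w)$ and then asserts without comment that $c_k$ is $C^1$, silently passing through $\op{\cdot}$ — so you have, to your credit, exposed a real weakness in the published argument rather than introduced a new one; a clean fix would be either to add simplicity of $\lambda_{\max}(\M_k(\w^\dagger))$ as an explicit hypothesis, or to replace $\op{\M_k}$ in the definition of $c_k$ by a smooth surrogate such as $\tr(\M_k)$ alone.
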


\begin{proof}[Proof of Proposition~\ref{prop:oracle-stationary}]
Since $\w^\dagger$ is a fixed point, \eqref{eq:T-def} gives
\begin{align*}
w_k^\dagger
=\frac{c_k(\w^\dagger)^{-1}}
{\sum_{j=1}^Kc_j(\w^\dagger)^{-1}},
\qquad k\in[K].
\end{align*}
Define
$\alpha:=\big(\sum_{j=1}^Kc_j(\w^\dagger)^{-1}\big)^{-1}$, so that
$w_k^\dagger c_k(\w^\dagger)=\alpha$. We thus have
\begin{align*}
\frac{\partial J(\w^\dagger)}{\partial w_k}
=2\alpha+\sum_{j=1}^K(w_j^\dagger)^2
\frac{\partial c_j(\w^\dagger)}{\partial w_k},
\qquad k\in[K].
\end{align*}

For any fixed $j,k,l\in[K]$, the interiority of $\w^\dagger$ implies that
$\w^\dagger+t(\mathbf e_k-\mathbf e_l)
\in\Omega_{\theta_0}(\w^\dagger)$ for all sufficiently small $|t|$.
Since $\op{t(\mathbf e_k-\mathbf e_l)}_1=2|t|$, Lemma~\ref{lem:lip} gives
\begin{align*}
\left|c_j\{\w^\dagger+t(\mathbf e_k-\mathbf e_l)\}
-c_j(\w^\dagger)\right|
\leq L(\theta_0)|t|.
\end{align*}
Dividing by $|t|$ and letting $t\to0$, Lemma~\ref{lem:diff} yields
\begin{align*}
\left|
\frac{\partial c_j(\w^\dagger)}{\partial w_k}
-\frac{\partial c_j(\w^\dagger)}{\partial w_l}
\right|
\leq L(\theta_0).
\end{align*}
Moreover, $\sum_{j=1}^K(w_j^\dagger)^2\leq1$. Therefore, for any
$k,l\in[K]$,
\begin{align*}
\left|
\frac{\partial J(\w^\dagger)}{\partial w_k}
-\frac{\partial J(\w^\dagger)}{\partial w_l}
\right|
&\leq\sum_{j=1}^K(w_j^\dagger)^2
\left|
\frac{\partial c_j(\w^\dagger)}{\partial w_k}
-\frac{\partial c_j(\w^\dagger)}{\partial w_l}
\right|\\
&\leq2L(\theta_0).
\end{align*}
Let $S:=K^{-1}\sum_{k=1}^K\partial J(\w^\dagger)/\partial w_k$. Then
\begin{align*}
\left|\frac{\partial J(\w^\dagger)}{\partial w_k}-S\right|
&\leq\frac1K\sum_{l=1}^K
\left|
\frac{\partial J(\w^\dagger)}{\partial w_k}
-\frac{\partial J(\w^\dagger)}{\partial w_l}
\right|
\leq2L(\theta_0),
\end{align*}
which proves the claim.
\end{proof}

\section{Proofs of Lemmas}\label{pf:lemmas}
\subsection{Proof of Lemma \ref{lemma:general}}    

We start with the expansion of $\bDelta_k$. 
Recall 	$\A_k = \underbrace{\U\V_k^\top + \U_k\W_k^\top}_{:=\A_k^*} + \E_k$, and $\bar\U_k = \mat{\U& \U_k}$ is the top $\bar r_k = r+r_k$ left singular vectors of $\A_k^*\A_k^{*\top}$, $\tilde\U_k$ is the top $\bar r_k$ left singular vectors of $\A_k\A_k^\top-d_k\sigma_k^2\I$. 
	Then $\A_k^*$ admits a rank $\bar r_k$ decomposition as 
	\begin{align*}
		\A_k^* = \mat{\U&\U_k}\mat{\D_1& 0\\ \D_3& \D_2}\mat{\R_1^\top\\\R_2^\top}:= \bar\U_k\bar\D_k\bar\R_k^\top,
	\end{align*}
	where $\D_1\in\RR^{r\times r},\D_2\in\RR^{r_k\times r_k}$ are not necessarily diagonal matrices, and $\bar\R_k$ has orthonormal columns. Then we have $\bGa_k = \mat{\V_k^\top\\ \W_k^\top}\mat{\V_k&\W_k} = 
    \bar\D_k\bar\D_k^\top$. 
We denote $\bXi_k = \A_k^*\E_k^\top + \E_k\A_k^{*\top} + \E_k\E_k^\top - d_k\sigma_k^2\I$. 
On $\calE_k$, the block bounds in \eqref{eq:local-good-event} imply
\begin{align*}
\op{\bXi_k}
&\leq C\left\{\kappa_k\lambda_{k,\min}\sqrt n\,\sigma_k
 +(\sqrt{nd_k}+n)\sigma_k^2\right\}\\
&\leq C\lambda_{k,\min}^2\left\{
 \kappa_k\snr_k^{-1}\sqrt n
 +\snr_k^{-2}\sqrt{nd_k}
 +n\snr_k^{-2}\right\}.
\end{align*}
Because $\kappa_k\geq1$ and
$n\snr_k^{-2}\leq(\kappa_k\sqrt n\,\snr_k^{-1})^2$, condition
\eqref{theta-max}, with $c_0$ sufficiently small, gives
$\op{\bXi_k}\leq c\lambda_{k,\min}^2$. Hence the projector series below
converges absolutely on $\calE_k$.

For $s\in\mathbb N_0$, define
\begin{align*}
\calP_{k}^{-s}
:=
\begin{cases}
\bar\U_k\bGa_k^{-s}\bar\U_k^\top, & s\geq1,\\
\bar\U_{k\perp}\bar\U_{k\perp}^\top, & s=0.
\end{cases}
\end{align*}
Then \cite[Theorem 1]{xia2021normal} gives
\begin{align}\label{eq:Deltak-xia}
\tilde \U_k\tilde \U_k^\top-\bar \U_k\bar\U_k^\top
&=\sum_{l\geq1}\sum_{\s\in\SS_l}
\underbrace{(-1)^{\lzero{\s}+1}
\calP_k^{-s_1}\bXi_k\calP_k^{-s_2}\bXi_k\cdots
\bXi_k\calP_k^{-s_{l+1}}}_{:=\widetilde{\S}_{k,l}(\s)}.
\end{align}
Under our convention
$\bDelta_k=\bar\U_k\bar\U_k^\top-\tilde\U_k\tilde\U_k^\top$, hence
\begin{align}\label{eq:Deltak}
\bDelta_k
&=-\sum_{l\geq1}\sum_{\s\in\SS_l}\widetilde{\S}_{k,l}(\s)
=: \sum_{l\geq1}\sum_{\s\in\SS_l}\S_{k,l}(\s),
\qquad
\S_{k,l}(\s):=-\widetilde{\S}_{k,l}(\s).
\end{align}
For coordinate calculations below, we also write
$\N_k(0)=\bar\U_{k\perp}$ and $\N_k(s)=\bar\U_k$ for $s>0$.
Thus, when $s_j>0$, the $j$th factor in \eqref{eq:Deltak-xia} is
$\N_k(s_j)\bGa_k^{-s_j}\N_k(s_j)^\top$; when $s_j=0$, it is
$\bar\U_{k\perp}\bar\U_{k\perp}^\top$.

Then the following matrices can be written in terms of $\G_k,\H_k$:
\begin{subequations} \label{eq:UxiU}
    \begin{align}
        \bar\U_k^\top\bXi_k\bar\U_k &= \bar\D_k\bar\R_k^\top\G_k^\top + \G_k\bar\R_k\bar\D_k^\top + \G_k\G_k^\top - d_k\sigma_k^2\I, \\
        \bar\U_k^\top\bXi_k\bar\U_{k\perp} &= \bar\D_k\bar\R_k^\top\H_k^\top + \G_k\H_k^\top, \\
        \bar\U_{k\perp}^\top\bXi_k\bar\U_{k\perp} &= \H_k\H_k^\top - d_k\sigma_k^2\I. 
    \end{align}
\end{subequations}
We begin with the upper bounds for $\op{(\bGa_k)^{-\frac{1}{2}}\bar\U_k^\top\bXi_{k}\bar\U_k(\bGa_k)^{-\frac{1}{2}}}$, $\op{\bar\U_{k\perp}^\top\bXi_{k}\bar\U_k(\bGa_k)^{-\frac{1}{2}}}$, and $\op{\bar\U_{k\perp}^\top\bXi_{k}\bar\U_{k\perp}}$. We have 
	\begin{align*}
		(\bGa_k)^{-\frac{1}{2}}\bar\U_k^\top\bXi_{k}\bar\U_k(\bGa_k)^{-\frac{1}{2}} = (\bar\D_k\bar\D_k^\top)^{-\frac{1}{2}}\big(\bar\D_k\bar\R_k^\top\G_k^\top + \G_k\bar\R_k\bar\D_k^\top + \G_k\G_k^\top - d_k\sigma_k^2\I\big) (\bar\D_k\bar\D_k^\top)^{-\frac{1}{2}},
	\end{align*}
	and thus under $\calE_k$ it is bounded by
	\begin{align*}
			\op{(\bGa_k)^{-\frac{1}{2}}\bar\U_k^\top\bXi_{k}\bar\U_k(\bGa_k)^{-\frac{1}{2}} } \lesssim (\bar r_k + (n\wedge d_k))^{1/2}\lambda_{k,\min}^{-1} (\sigma_k + d_k^{1/2}\lambda_{k,\min}^{-1}\sigma_k^2).
	\end{align*}
	And 
	\begin{align*}
		(\bGa_k)^{-\frac{1}{2}}\bar\U_{k}^\top\bXi_{k}\bar\U_{k\perp}= (\bar\D_k\bar\D_k^\top)^{-\frac{1}{2}}\big(\bar\D_k\bar\R_k^\top\H_k^\top + \G_k\H_k^\top\big),
	\end{align*}
	which is bounded by 
	\begin{align*}
		\op{(\bGa_k)^{-\frac{1}{2}}\bar\U_{k}^\top\bXi_{k}\bar\U_{k\perp}} \lesssim n^{1/2}\sigma_k +(nd_k)^{1/2}\lambda_{k,\min}^{-1}\sigma_k^2. 
	\end{align*}
	under $\calE_k$. 
	And 
	\begin{align*}
		\bar\U_{k\perp}^\top\bXi_{k}\bar\U_{k\perp}= \H_k\H_k^\top - d_k\sigma_k^2\I,
	\end{align*}
	which is bounded by 
	\begin{align*}
		\op{\bar\U_{k\perp}^\top\bXi_{k}\bar\U_{k\perp}} \lesssim\big((n\vee d_k)n\big)^{1/2}\sigma_k^2. 
	\end{align*}
		under $\calE_k$. Using the event bounds above and the crude dimension bound $\bar r_k\le n\wedge d_k$, we conclude that
	\begin{align}\label{defu}
		&\max\bigg\{\op{(\bGa_k)^{-\frac{1}{2}}\bar\U_k^\top\bXi_{k}\bar\U_k(\bGa_k)^{-\frac{1}{2}} }, \lambda_{k,\min}^{-1}\op{(\bGa_k)^{-\frac{1}{2}}\bar\U_{k}^\top\bXi_{k}\bar\U_{k\perp}}, \lambda_{k,\min}^{-2}\op{\bar\U_{k\perp}^\top\bXi_{k}\bar\U_{k\perp}}\bigg\}\notag\\
		&\lesssim n^{1/2}\lambda_{k,\min}^{-1}\sigma_k +(nd_k)^{1/2}\lambda_{k,\min}^{-2}\sigma_k^2 = \varepsilon_k 
	\end{align}
Here the term $n\sigma_k^2\lambda_{k,\min}^{-2}$ from $\bar\U_{k\perp}^\top\bXi_k\bar\U_{k\perp}$ is absorbed because
$n\snr_k^{-2}\leq\sqrt n\,\snr_k^{-1}$ whenever $\sqrt n\,\snr_k^{-1}\leq1$. Thus \eqref{defu} follows from \eqref{theta-max}.

    We summarize some important observations about $\bDelta_k$ in the following lemmas.

    \begin{lemma}\label{lemma:boundDeltak}
    On $\calE_k$,
    \begin{align*}
        \op{\bDelta_k}&\leq C\varepsilon_k,\\
        \op{\bar\U_k^\top\bDelta_k\bar\U_k}
        +\op{\bar\U_{k\perp}^\top\bDelta_k\bar\U_{k\perp}}
        &\leq C\varepsilon_k^2,\\
        \op{\bar\U_k^\top\bDelta_k\bar\U_{k\perp}}
        &\leq C\varepsilon_k.
    \end{align*}
    \end{lemma}

The proof of the following lemma is given in Section~\ref{sec:proof:lemma:meanzero}.

\begin{lemma}\label{lemma:meanzero}
Conditionally on $\calG$,
\begin{align*}
    \EE_{\calG}\bbrac{\bar\U_k^\top\bDelta_k\bar\U_{k\perp}\cdot 1(\calE_k)}=0.
\end{align*}
\end{lemma}

Now we are ready to prove the lemma. The proof of the lemmas involved can be found in the appendix. 

\noindent\textbf{Bounding $\op{\sum_{k=1}^{K}w_k\U^\top\bDelta_k\U_{\perp}\bLa^{-1}\cdot 1(\calE_k)}$.}
Since $\U^\top \bar\U_{k\perp} = 0$, we have the following decomposition:
\begin{align*}
    &\quad\sum_{k=1}^{K}w_k\U^\top\bDelta_k\U_{\perp}\bLa^{-1}\cdot 1(\calE_k) \\
    &= \sum_{k=1}^{K}w_k\underbrace{\U^\top\bar\U_k\bar\U_k^\top\bDelta_k\bar\U_k\bar\U_k^\top\U_{\perp}\bLa^{-1}}_{:=\R_k}\cdot 1(\calE_k) + \sum_{k=1}^{K}w_k\underbrace{\U^\top\bar\U_k\bar\U_k^\top\bDelta_k\bar\U_{k\perp}\bar\U_{k\perp}^\top\U_{\perp}\bLa^{-1}}_{:=\Q_k}\cdot 1(\calE_k).
\end{align*}
The following lemma provides the conditional expectation bound for $\R_k\cdot 1(\calE_k)$.
\begin{lemma}\label{lemma:expectation-upper-bound}
For all $k\in[K]$,
\begin{align*}
    \op{\EE_{\calG}\bbrac{\R_k\cdot 1(\calE_k)}}
    \le C\varepsilon_k^2\brac{\delta_k(1-\delta_k^2)^{-1}\wedge 1}\theta^{-1}.
\end{align*}
\end{lemma}
Next the following lemma controls the upper bound for  $$\op{\sum_{k=1}^{K}w_k\R_k\cdot 1(\calE_k) - \EE\sum_{k=1}^{K}w_k\R_k\cdot 1(\calE_k)}.$$ 
\begin{lemma}\label{lemma:concentrationR:general}
Conditionally on $\calG$,
\begin{align*}
\PP\bigg(\op{\sum_{k=1}^{K}w_k\Big\{\R_k\cdot 1(\calE_k)-\EE_{\calG}[\R_k\cdot 1(\calE_k)]\Big\}}\geq C\sqrt{\sum_{k=1}^{K}w_k^2\varepsilon_k^{4}\theta^{-2}}\sqrt{\log n}\,\Bigm|\,\calG\bigg)\leq n^{-10}.
\end{align*}
\end{lemma}

Finally, we consider the upper bound for $\op{\sum_{k=1}^{K}w_k\Q_k\cdot 1(\calE_k)}$. 

\begin{lemma}\label{lemma:concentrationUDU}
Conditionally on $\calG$,
\begin{align*}
\PP\bigg(\op{\sum_{k=1}^{K}w_k\Q_k\cdot 1(\calE_k)}\geq C\sqrt{\sum_{k=1}^{K}n^{-1}w_k^2 \big(\tr(\M_k)+\bar r_k\op{\M_k}\big)\varepsilon_k^{2}}\sqrt{\log (n)}\,\Bigm|\,\calG\bigg)\leq n^{-10},
\end{align*}
where $\M_k = \bar\U_{k\perp}^\top\U_{\perp}\bLa^{-2}\U_{\perp}^\top\bar\U_{k\perp}$.
\end{lemma}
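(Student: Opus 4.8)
The plan is to recognize $\sum_{k=1}^{K}w_k\Q_k\cdot 1(\calE_k)$ as a sum of independent, mean-zero random matrices and apply a matrix concentration inequality, choosing the decomposition so that the variance proxies reproduce the target. Write $\Q_k=\U^\top\bar\U_k\,\X_k\,\B_k$ with $\X_k:=\bar\U_k^\top\bDelta_k\bar\U_{k\perp}$ and the deterministic factors $\U^\top\bar\U_k=\mat{\I_r& 0}$ (so $\op{\U^\top\bar\U_k}\le 1$, $\fro{\U^\top\bar\U_k}^2=r$) and $\B_k:=\bar\U_{k\perp}^\top\U_{\perp}\bLa^{-1}$ (so $\B_k\B_k^\top=\M_k$, hence $\op{\B_k}^2=\op{\M_k}$ and $\fro{\B_k}^2=\tr(\M_k)$). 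Since $\X_k$ is a function of $\E_k$ only, the summands are independent across $k$; since $\EE[\X_k\cdot 1(\calE_k)]=0$ by \Cref{lemma:meanzero}, each $\Q_k\cdot 1(\calE_k)$ is mean zero; and by \Cref{lemma:boundDeltak}, $\op{\Q_k}\cdot 1(\calE_k)\le\op{\bDelta_k}\cdot 1(\calE_k)\,\op{\B_k}\le C\varepsilon_k\op{\M_k}^{1/2}$.

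Next I would expand $\X_k$ using the Neumann-type expansion \eqref{eq:Deltak} together with the block identities \eqref{eq:UxiU}. The first-order (linear in $\bXi_k$) part of $\X_k$ equals $\bGa_k^{-1}\bar\U_k^\top\bXi_k\bar\U_{k\perp}=\bGa_k^{-1}\bbrac{\bar\D_k\bar\R_k^\top+\G_k}\H_k^\top$, which I split into a ``deterministic-coefficient'' Gaussian term $\bGa_k^{-1}\bar\D_k\bar\R_k^\top\H_k^\top$ and a degree-two Gaussian-chaos term $\bGa_k^{-1}\G_k\H_k^\top$; the rest of the expansion is a remainder $\X_k^{(\ge 2)}$ obeying $\op{\X_k^{(\ge 2)}}\cdot 1(\calE_k)\le C\varepsilon_k^2$ by \eqref{defu} and geometric summation. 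Correspondingly $\Q_k\cdot 1(\calE_k)$ decomposes into three pieces (each mean zero), which I bound separately and combine by a union bound over $O(1)$ events.

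For the deterministic-coefficient piece $\sum_k w_k\bbrac{\U^\top\bar\U_k\bGa_k^{-1}\bar\D_k\bar\R_k^\top}\H_k^\top\B_k$, which is a matrix Gaussian series, I would compute the two variance matrices using $\EE[\H_k^\top\M\H_k]=\sigma_k^2\tr(\M)\I$, obtaining $\sum_k w_k^2\sigma_k^2\tr(\M_k)\,\U^\top\bar\U_k\bGa_k^{-1}\bar\U_k^\top\U$ and $\sum_k w_k^2\sigma_k^2\tr(\bGa_k^{-1}\bar\U_k^\top\U\U^\top\bar\U_k)\,\B_k^\top\B_k$, where I used $\bar\R_k^\top\bar\R_k=\I$ and $\bar\D_k\bar\D_k^\top=\bGa_k$. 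Bounding $\op{\bGa_k^{-1}}\le\lambda_{k,\min}^{-2}$, $\tr(\bGa_k^{-1}\bar\U_k^\top\U\U^\top\bar\U_k)\le r\lambda_{k,\min}^{-2}\le\bar r_k\lambda_{k,\min}^{-2}$, $\op{\B_k^\top\B_k}=\op{\M_k}$, and invoking $\sigma_k^2\lambda_{k,\min}^{-2}=\snr_k^{-2}\le n^{-1}\varepsilon_k^2$ (directly from the definition of $\varepsilon_k$), the matrix variance is at most $C\sum_k n^{-1}w_k^2\bbrac{\tr(\M_k)+\bar r_k\op{\M_k}}\varepsilon_k^2$, so matrix Bernstein with $t\asymp\sqrt{\log n}$ yields the claimed bound with probability at least $1-n^{-10}$. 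The chaos piece $\sum_k w_k\bbrac{\U^\top\bar\U_k\bGa_k^{-1}\G_k}\H_k^\top\B_k$ I would handle by conditioning on $\{\G_k\}_{k=1}^K$: conditionally it is again a Gaussian series, on $\calE_k$ we have $\op{\G_k\G_k^\top}\lesssim d_k\sigma_k^2$, so the conditional variances pick up only the extra factor $\sigma_k^2 d_k\lambda_{k,\min}^{-2}$, and $\sigma_k^4 d_k\lambda_{k,\min}^{-4}=d_k\snr_k^{-4}\le n^{-1}\varepsilon_k^2$ preserves the same bound; a matrix Hanson--Wright-type inequality (or conditioning-then-Bernstein) then closes this piece.

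I expect the remainder $\X_k^{(\ge 2)}$ to be the main obstacle. The crude bound $\op{\X_k^{(\ge 2)}}\cdot 1(\calE_k)\le C\varepsilon_k^2$, inserted directly into matrix Bernstein, loses the crucial $n^{-1}$ factor in the variance (indeed $\sum_k w_k\varepsilon_k^2\op{\M_k}^{1/2}$ is not in general dominated by the target). To recover it one must still peel the outermost noise factor off the remainder: by \eqref{eq:UxiU} the rightmost factor of every term in $\bar\U_k^\top\S_{k,l}(\s)\bar\U_{k\perp}$ with $l\ge 2$ is either $(\bar\D_k\bar\R_k^\top+\G_k)\H_k^\top$ or $\H_k\H_k^\top-d_k\sigma_k^2\I$, so $\X_k^{(\ge 2)}$ carries a free sub-Gaussian (or Gaussian-chaos) factor acting on $\B_k$, which concentrates and supplies the $\sqrt{\tr(\M_k)/n}$-type saving; redoing the second-moment bookkeeping on this factored form, with the lower-order prefactor bounded in operator norm via \eqref{defu}, shows the remainder contributes at strictly lower order. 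A secondary technical point is that the natural $\ell_\infty$-type boundedness parameter $w_k\varepsilon_k\op{\M_k}^{1/2}$ entering matrix Bernstein is too lossy; this is avoided by treating the genuinely Gaussian (linear and order-two) pieces with matrix Khintchine / matrix Hanson--Wright, where no boundedness term is incurred, and confining any crude operator-norm bound to the already lower-order remainder.
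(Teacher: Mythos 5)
Your strategy coincides with the paper's: use the sign\mbox{-}symmetry mean\mbox{-}zero property of $\bar\U_k^\top\bDelta_k\bar\U_{k\perp}\cdot 1(\calE_k)$ (\Cref{lemma:meanzero}), expand $\bDelta_k$ via \eqref{eq:Deltak}, and recover the $n^{-1}\bbrac{\tr(\M_k)+\bar r_k\op{\M_k}}$ variance by letting the noise factor adjacent to $\B_k=\bar\U_{k\perp}^\top\U_{\perp}\bLa^{-1}$ concentrate instead of taking its operator norm (the paper does this uniformly in the order $l$ via \Cref{lemma:concentration-general} with $\O_k\propto\U^\top\bar\U_k$, $\N_k=\B_k$, $\N_k^\top\N_k=\M_k$). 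Your explicit first-order variance computations are correct, as are the two diagnoses you make: the a.s.\ bound in matrix Bernstein is too lossy, and the crude bound $\op{\X_k^{(\ge2)}}\cdot 1(\calE_k)\le C\varepsilon_k^2$ destroys the $n^{-1}$ factor.

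The step you assert rather than prove --- ``redoing the second-moment bookkeeping on this factored form \dots shows the remainder contributes at strictly lower order'' --- is precisely where the paper's technical work sits, and it is not as routine as ``prefactor in operator norm times a concentrating free factor.'' For an $l\ge 2$ term, the factor adjacent to $\B_k$ and the interior factors are \emph{dependent}: both $\Y_k=(\bar\D_k\bar\D_k^\top)^{-1/2}(\bar\D_k\bar\R_k^\top+\G_k)\H_k^\top$ and $\Z_k=\H_k\H_k^\top-d_k\sigma_k^2\I$ are built from the same $\H_k$, so one must control (the $\psi_1$ norm of) the coupled quantity $\op{\Y_k\Z_k^a\M_0\Z_k^a\Y_k^\top}$ with $\M_0$ built from $\B_k$; the paper's \Cref{lemma:psione-norm} does this by splitting $\H_k\H_k^\top$ into the piece $\H_1\H_1^\top$ correlated with $\Y_k$ and an independent piece $\H_2\H_2^\top$, together with parity arguments, before \Cref{lemma:psione-operatornorm} can be invoked to produce the $\tr(\M_0)+\bar r_k\op{\M_0}$ factor. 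Two smaller loose ends: (i) the indicator $1(\calE_k)$ destroys exact Gaussianity of your ``Gaussian series'' pieces, so matrix Khintchine does not apply verbatim --- you need either a truncation step (remove the indicator on $\cap_k\calE_k$, noting the mean stays zero by parity) or the paper's $\psi_1$/MGF route, which absorbs the indicator for free; (ii) controlling the full remainder requires a union bound over all $l\ge 2$ and $\s\in\SS_l$ with failure probabilities decaying like $(4n)^{-l}$ so that the $4^l$ terms per level sum geometrically --- your ``union bound over $O(1)$ events'' undercounts this unless the entire remainder is packaged into a single concentration statement, which is effectively what \Cref{lemma:concentration-general} does.
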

Combining these lemmas and we get the desired bound for $\op{\sum_{k=1}^{K}w_k\U^\top\bDelta_k\U_{\perp}\bLa^{-1}\cdot 1(\calE_k)}$.

\noindent\textbf{Bounding $\op{\sum_{k=1}^{K}w_k\U^\top\bDelta_k\U\cdot 1(\calE_k)}$.}
We have 
\begin{align*}
    \op{\sum_{k=1}^{K}w_k\U^\top\bDelta_k\U\cdot 1(\calE_k)} &= \op{\sum_{k=1}^{K}w_k\U^\top\bar\U_k\cdot \underline{\bar\U_k^\top\bDelta_k\bar\U_k}\cdot \bar\U_k^\top\U\cdot 1(\calE_k)}
    \leq \sum_{k=1}^{K} w_k\varepsilon_k^2. 
\end{align*}

\noindent\textbf{Bounding $\op{\sum_{k=1}^{K}w_k\U_{\perp}^\top\bDelta_k\U_{\perp}\cdot 1(\calE_k)}$.}
It can be written as 
\begin{align*}
    &\quad\op{\sum_{k=1}^{K}w_k\U_{\perp}^\top\bDelta_k\U_{\perp}\cdot 1(\calE_k)} \\
    &\leq \op{\sum_{k=1}^{K}w_k\U_{\perp}^\top\bar\U_k\underline{\bar\U_k^\top\bDelta_k\bar\U_k}\bar\U_k^\top\U_{\perp}\cdot 1(\calE_k)}+\op{\sum_{k=1}^{K}w_k\U_{\perp}^\top\bar\U_{k\perp}\underline{\bar\U_{k\perp}^\top\bDelta_k\bar\U_{k\perp}}\bar\U_{k\perp}^\top\U_{\perp}\cdot 1(\calE_k)}\\
	    &\quad + \op{\sum_{k=1}^{K}w_k\U_{\perp}^\top\bar\U_k\underline{\bar\U_k^\top\bDelta_k\bar\U_{k\perp}}\bar\U_{k\perp}^\top\U_{\perp}\cdot 1(\calE_k)} + \op{\sum_{k=1}^{K}w_k\U_{\perp}^\top\bar\U_{k\perp}\underline{\bar\U_{k\perp}^\top\bDelta_k\bar\U_k}\bar\U_k^\top\U_{\perp}\cdot 1(\calE_k)}.
\end{align*}
For the first two terms on the right hand side, we have 
\begin{align*}
   \op{\sum_{k=1}^{K}w_k\U_{\perp}^\top\bar\U_k\bar\U_k^\top\bDelta_k\bar\U_k\bar\U_k^\top\U_{\perp}\cdot 1(\calE_k)} &\leq C\sum_{k=1}^{K}w_k \varepsilon_k^2, \\
    \op{\sum_{k=1}^{K}w_k\U_{\perp}^\top\bar\U_{k\perp}\bar\U_{k\perp}^\top\bDelta_k\bar\U_{k\perp}\bar\U_{k\perp}^\top\U_{\perp}\cdot 1(\calE_k)}&\leq C\sum_{k=1}^{K}w_k \varepsilon_k^2.
\end{align*}
For the last two distinct cross terms, we use Lemma \ref{lemma:concentration-general} directly and symmetrically for the transpose-type term, and, conditionally on $\calG$, we have with probability exceeding $1-2n^{-10}$, 
\begin{align*}
	    \op{\sum_{k=1}^{K}w_k\U_{\perp}^\top\bar\U_k\underline{\bar\U_k^\top\bDelta_k\bar\U_{k\perp}}\bar\U_{k\perp}^\top\U_{\perp}\cdot 1(\calE_k)} &\leq C\sqrt{\sum_{k=1}^{K}w_k^2\varepsilon_k^{2}}\sqrt{\log (n)},\\
    \op{\sum_{k=1}^{K}w_k\U_{\perp}^\top\bar\U_{k\perp}\underline{\bar\U_{k\perp}^\top\bDelta_k\bar\U_k}\bar\U_k^\top\U_{\perp}\cdot 1(\calE_k)}&\leq C\sqrt{\sum_{k=1}^{K}w_k^2\varepsilon_k^{2}}\sqrt{\log (n)}.
\end{align*}
In conclusion, we have 
\begin{align*}
 \op{\sum_{k=1}^{K}w_k\U_{\perp}^\top\bDelta_k\U_{\perp}\cdot 1(\calE_k)}
    &\leq C\sum_{k=1}^{K}w_k\varepsilon_k^2 + C\sqrt{\sum_{k=1}^{K}w_k^2\varepsilon_k^{2}}\sqrt{\log (n)}.
\end{align*}
Finally, notice that the upper bound for $\op{\sum_{k=1}^{K}w_k\U^\top\bDelta_k\U_{\perp}\cdot 1(\calE_k)}$ can be proved using Lemma \ref{lemma:expectation-upper-bound} - Lemma \ref{lemma:concentrationUDU} with $\bLa = \I$. In fact, from the lemmas above, we conclude 
\begin{align*}
    \op{\sum_{k=1}^{K}w_k\U^\top\bDelta_k\U_{\perp}\cdot 1(\calE_k)} &\leq C\sqrt{\sum_{k=1}^{K}w_k^2\varepsilon_k^{2}}\sqrt{\log n} + C\sum_{k=1}^{K}w_k\varepsilon_k^2.
\end{align*}

\subsection{Proof of Lemma \ref{lem:lip}}

We first show that for any $\w,\w'\in\Delta_K$,
\begin{align}\label{eq:theta-lip}
|\theta(\w)-\theta(\w')|\leq\op{\w-\w'}_1.
\end{align}
Denote $\B(\w):=\sum_{k=1}^Kw_k\U_k\U_k^\top$, then
$\theta(\w)=1-\lambda_{\max}\{\B(\w)\}$. Notice that
\begin{align*}
\op{\B(\w)-\B(\w')}
\leq\sum_{k=1}^K|w_k-w_k'|
=\op{\w-\w'}_1.
\end{align*}
By Weyl's inequality, we thus get
\begin{align*}
|\theta(\w)-\theta(\w')|
&=|\lambda_{\max}\{\B(\w)\}
-\lambda_{\max}\{\B(\w')\}|\\
&\leq\op{\B(\w)-\B(\w')}
\leq\op{\w-\w'}_1.
\end{align*}
Then for any $\w\in\Omega_{\theta_0}(\w^\dagger)$,
\begin{align*}
\theta(\w)
\geq\theta(\w^\dagger)-\op{\w-\w^\dagger}_{1}
\geq\theta_0>0.
\end{align*}
Next, we show that for any
$\w,\w'\in\Omega_{\theta_0}(\w^\dagger)$,
\begin{align}\label{eq:M-k-lip}
\op{\M_k(\w)-\M_k(\w')}
\leq2\theta_0^{-3}\op{\w-\w'}_1,\qquad k\in[K].
\end{align}
Let $\A(\w):=\I-\B(\w)$.  Since
$\lambda_{\min}\{\A(\w)\}=\theta(\w)$,
\begin{align}\label{eq:A-positive}
\A(\w)\succeq\theta_0\I,
\qquad
\op{\A(\w)^{-1}}\leq\theta_0^{-1}
\end{align}
on $\Omega_{\theta_0}(\w^\dagger)$.  Moreover,
$\U_k^\top\U=\boldsymbol0$ for every $k$, so
$\B(\w)\U=\boldsymbol0$ and
\begin{align*}
\A(\w)
&=\U\U^\top
+\U_\perp(\w)\bLa(\w)\U_\perp(\w)^\top,\\
\A(\w)^{-2}
&=\U\U^\top
+\U_\perp(\w)\bLa(\w)^{-2}\U_\perp(\w)^\top.
\end{align*}
Because $\bar\U_{k\perp}^\top\U=\boldsymbol0$, it follows that
\begin{align}\label{eq:M-A}
\M_k(\w)
=\bar\U_{k\perp}^\top\A(\w)^{-2}\bar\U_{k\perp}.
\end{align}
This representation avoids inverting
$\I-\U\U^\top-\B(\w)$, which is singular on
$\operatorname{span}(\U)$.

Write $\A:=\A(\w)$ and $\A':=\A(\w')$.  The identities
\begin{align*}
\A^{-1}-\A'^{-1}
&=\A^{-1}(\A'-\A)\A'^{-1},\\
\A^{-2}-\A'^{-2}
&=\A^{-1}(\A^{-1}-\A'^{-1})
+(\A^{-1}-\A'^{-1})\A'^{-1}
\end{align*}
and \eqref{eq:A-positive}, together with
$\op{\A-\A'}=\op{\B(\w)-\B(\w')}
\leq\op{\w-\w'}_1$, imply
\begin{align*}
\op{\A^{-1}-\A'^{-1}}
&\leq\theta_0^{-2}\op{\A-\A'}
\leq\theta_0^{-2}\op{\w-\w'}_1,\\
\op{\A^{-2}-\A'^{-2}}
&\leq2\theta_0^{-1}\op{\A^{-1}-\A'^{-1}}
\leq2\theta_0^{-3}\op{\w-\w'}_1.
\end{align*}
Since $\bar\U_{k\perp}$ has orthonormal columns,
\eqref{eq:M-A} proves \eqref{eq:M-k-lip}.

To control the trace, let
\begin{align*}
d_l&:=w_l-w_l',
&\bDelta&:=\sum_{l=1}^Kd_l\U_l\U_l^\top,\\
\w_t&:=\w'+t(\w-\w'),
&\A_t&:=\A(\w_t),
\end{align*}
and let
$\boldsymbol P_k:=\bar\U_{k\perp}\bar\U_{k\perp}^\top$.
Then $\boldsymbol P_k$ is an orthogonal projector, so
$\op{\boldsymbol P_k}=1$.
The set $\Omega_{\theta_0}(\w^\dagger)$ is convex, so
$\op{\A_t^{-1}}\leq\theta_0^{-1}$ for $t\in[0,1]$.
Since $\dot\A_t=-\bDelta$, differentiating
$\A_t^{-1}\A_t=\I$ and using the product rule give
\begin{align}\label{eq:A2-derivative}
\frac{\mathrm d}{\mathrm dt}\A_t^{-2}
=\A_t^{-1}\bDelta\A_t^{-2}
+\A_t^{-2}\bDelta\A_t^{-1}.
\end{align}
By \eqref{eq:M-A}, cyclicity of the trace, and trace duality,
\begin{align*}
\left|\frac{\mathrm d}{\mathrm dt}
\tr(\boldsymbol P_k\A_t^{-2})\right|
&\leq
\op{\A_t^{-2}\boldsymbol P_k\A_t^{-1}}\op{\bDelta}_*
+\op{\A_t^{-1}\boldsymbol P_k\A_t^{-2}}\op{\bDelta}_*\\
&\leq2\theta_0^{-3}\op{\bDelta}_*.
\end{align*}
Each $\U_l\U_l^\top$ is a rank-$r_l$ orthogonal projector. Hence
\begin{align*}
\op{\bDelta}_*
\leq\sum_{l=1}^K|d_l|\op{\U_l\U_l^\top}_*
=\sum_{l=1}^Kr_l|d_l|
\leq\bar r\op{\w-\w'}_1.
\end{align*}
Integrating \eqref{eq:A2-derivative} from $0$ to $1$ yields
\begin{align}\label{eq:trace-bound}
|\tr\{\M_k(\w)\}-\tr\{\M_k(\w')\}|
\leq2\theta_0^{-3}\bar r\op{\w-\w'}_1.
\end{align}
Equipped with \eqref{eq:theta-lip} and \eqref{eq:M-k-lip}, we can prove our
desired result. By definition, we get
\begin{align*}
|c_k(\w)-c_k(\w')|
&\leq\varepsilon_k^4
\left|\theta(\w)^{-2}-\theta(\w')^{-2}\right|\\
&\quad+n^{-1}\varepsilon_k^2\left(
|\tr\{\M_k(\w)\}-\tr\{\M_k(\w')\}|
+\bar r_k|\op{\M_k(\w)}-\op{\M_k(\w')}|
\right).
\end{align*}
The proof is completed by using \eqref{eq:theta-lip},
\eqref{eq:M-k-lip},
$|x^{-2}-y^{-2}|\leq2\theta_0^{-3}|x-y|$ for $x,y\geq\theta_0$,
the trace bound \eqref{eq:trace-bound}, and
$|\op{\M_k(\w)}-\op{\M_k(\w')}|
\leq\op{\M_k(\w)-\M_k(\w')}$.  Indeed,
\begin{align*}
|c_k(\w)-c_k(\w')|
&\leq2\theta_0^{-3}
\left\{\varepsilon_k^4+2n^{-1}\varepsilon_k^2\bar r\right\}
\op{\w-\w'}_1\\
&\leq\frac{L(\theta_0)}2\op{\w-\w'}_1,
\end{align*}
where $r_l\leq\bar r$ and $\bar r_k\leq\bar r$ were used.
\qed

\subsection{Proof of Lemma \ref{lem:diff}}
Denote $\B(\w):=\sum_{k=1}^Kw_k\U_k\U_k^\top$. The map
$\w\mapsto\B(\w)$ is affine in $\w$ and hence $C^1$. 
If $\B(\w^\dagger)\neq\boldsymbol0$, then its largest eigenvalue is
positive and simple by assumption (2).  Theorem~1 of
\cite{magnus1985differentiating} states that a simple eigenvalue of a real symmetric
matrix is a $C^\infty$ function of the matrix in a neighborhood of the
reference matrix.  Since $\w\mapsto\B(\w)$ is affine, it follows that
$\lambda_{\max}\{\B(\w)\}$, and hence
$\theta(\w)=1-\lambda_{\max}\{\B(\w)\}$, is $C^1$ on a neighborhood of
$\w^\dagger$.  The eigenvalue remains the largest one on a sufficiently
small neighborhood by its positive spectral gap and Weyl's inequality.
If $\B(\w^\dagger)=\boldsymbol0$, then the positivity of every
$w_k^\dagger$ and the positive semidefiniteness of
$\U_k\U_k^\top$ imply $r_k=0$ for every $k$.  Thus
$\B(\w)\equiv\boldsymbol0$ and $\theta(\w)\equiv1$.

Define $\A(\w):=\I-\B(\w)$.  Since
$\lambda_{\min}\{\A(\w^\dagger)\}=\theta(\w^\dagger)>0$,
$\A(\w)$ is positive definite on an open neighborhood $\Omega$ of
$\w^\dagger$ in $\RR^K$.  Matrix inversion is $C^\infty$ on the set of
nonsingular matrices. Explicitly,
\begin{align*}
D(\A^{-1})[\boldsymbol E]
=-\A^{-1}\boldsymbol E\A^{-1}.
\end{align*}
Therefore $\w\mapsto\A(\w)^{-2}$ is $C^1$ on $\Omega$.
By \eqref{eq:M-A}, $\w\mapsto\M_k(\w)$ and
$\w\mapsto\tr\{\M_k(\w)\}$ are also $C^1$ on $\Omega$.
Finally, assumption (3) and Theorem~1 of Magnus~\cite{magnus1985differentiating} imply,
after possibly shrinking $\Omega$, that
\begin{align*}
\w\longmapsto\op{\M_k(\w)}
=\lambda_{\max}\{\M_k(\w)\}
\end{align*}
is $C^1$ on $\Omega$ for every $k\in[K]$. Simplicity and Weyl's
inequality ensure that this local eigenvalue branch remains the largest one. The definitions of $c_k(\w)$ and
$J(\w)=\sum_{k=1}^Kw_k^2c_k(\w)$ then show that they are also $C^1$ on
$\Omega$.
\qed

\subsection{Proof of Lemma \ref{lemma:boundDeltak}}

Fix $l\geq1$ and $\s=(s_1,\ldots,s_{l+1})\in\SS_l$.
    We first bound the single term $\S_{k,l}(\s)$ in
    \eqref{eq:Deltak}.  Among the $l$ consecutive pairs
    $(s_j,s_{j+1})$, let
    \begin{align*}
        e_{11}
        &:=\#\{j:s_j>0,\ s_{j+1}>0\},\\
        e_{10}
        &:=\#\{j:\text{exactly one of }s_j,s_{j+1}\text{ is positive}\},\\
        e_{00}
        &:=\#\{j:s_j=s_{j+1}=0\}.
    \end{align*}
    Thus $e_{11}+e_{10}+e_{00}=l$.

    At every occurrence of $\bar\U_k^\top\bXi_k\bar\U_k$, insert
    $\bGa_k^{-1/2}$ on both sides.  At every occurrence of
    $\bar\U_k^\top\bXi_k\bar\U_{k\perp}$ or its transpose, insert
    $\bGa_k^{-1/2}$ on the side corresponding to $\bar\U_k$.  The remaining block is
    $\bar\U_{k\perp}^\top\bXi_k\bar\U_{k\perp}$.  If $s_j>0$, the
    factor $\bGa_k^{-s_j}$ in \eqref{eq:Deltak-xia} supplies one
    factor $\bGa_k^{-1/2}$ for each adjacent perturbation block.  This
    is always possible: an endpoint has one adjacent block and an
    interior index has two, while $s_j\geq1$.  All inverse powers left
    after these insertions have nonnegative exponents.

    By \eqref{defu}, on $\calE_k$ the three resulting block types satisfy
    \begin{align*}
    \op{\bGa_k^{-1/2}\bar\U_k^\top\bXi_k\bar\U_k
            \bGa_k^{-1/2}}
        &\leq C\varepsilon_k,\\
    \op{\bGa_k^{-1/2}\bar\U_k^\top\bXi_k\bar\U_{k\perp}}
        +\op{\bar\U_{k\perp}^\top\bXi_k\bar\U_k
            \bGa_k^{-1/2}}
        &\leq C\lambda_{k,\min}\varepsilon_k,\\
    \op{\bar\U_{k\perp}^\top\bXi_k\bar\U_{k\perp}}
        &\leq C\lambda_{k,\min}^2\varepsilon_k.
    \end{align*}
    The original inverse exponents sum to
    $s_1+\cdots+s_{l+1}=l$.  The inserted half powers use total exponent
    $(2e_{11}+e_{10})/2$.  Therefore all remaining inverse powers have
    product norm at most
    \begin{align*}
        \lambda_{k,\min}^{-2l+2e_{11}+e_{10}}.
    \end{align*}
    The $l$ perturbation blocks contribute at most
    \begin{align*}
        (C\varepsilon_k)^l
        \lambda_{k,\min}^{e_{10}+2e_{00}}.
    \end{align*}
    Multiplying the last two displays gives $(C\varepsilon_k)^l$, because
    \begin{align*}
    -2l+2e_{11}+e_{10}+e_{10}+2e_{00}=0.
    \end{align*}
    Hence every order-$l$ term in \eqref{eq:Deltak} satisfies
    \begin{align}\label{eq:single-local-monomial-bound}
        \op{\S_{k,l}(\s)}\,1(\calE_k)
        \leq (C\varepsilon_k)^l.
    \end{align}

    The number of elements of $\SS_l$ is
    $\binom{2l}{l}\leq4^l$.  Since \eqref{theta-max} and $\kappa_k\geq1$
    imply $\varepsilon_k\leq c$ for a sufficiently small absolute
    constant, \eqref{eq:single-local-monomial-bound} yields
    \begin{align*}
        \op{\bDelta_k}\,1(\calE_k)
        &\leq\sum_{l\geq1}(4C\varepsilon_k)^l
        \leq C\varepsilon_k.
    \end{align*}

    It remains to identify the first possible order of each compressed
    block.  The matrix
    $\bar\U_k^\top\S_{k,l}(\s)\bar\U_k$ can be nonzero only when
    $s_1>0$ and $s_{l+1}>0$.  These two inequalities cannot hold when
    $l=1$, because $s_1+s_2=1$.  Likewise,
    $\bar\U_{k\perp}^\top\S_{k,l}(\s)\bar\U_{k\perp}$ can be nonzero
    only when $s_1=s_{l+1}=0$, which is also impossible for $l=1$.
    Therefore both diagonal blocks start at order two, and
    \begin{align*}
        &\op{\bar\U_k^\top\bDelta_k\bar\U_k}\,1(\calE_k)
        +\op{\bar\U_{k\perp}^\top\bDelta_k\bar\U_{k\perp}}\,1(\calE_k)\\
        &\qquad\leq 2\sum_{l\geq2}(4C\varepsilon_k)^l
        \leq C\varepsilon_k^2.
    \end{align*}
    Finally,
    $\bar\U_k^\top\S_{k,l}(\s)\bar\U_{k\perp}$ can be nonzero only
    when $s_1>0$ and $s_{l+1}=0$; this is possible at order one.
    Hence
    \begin{align*}
        \op{\bar\U_k^\top\bDelta_k\bar\U_{k\perp}}\,1(\calE_k)
        \leq\sum_{l\geq1}(4C\varepsilon_k)^l
        \leq C\varepsilon_k.
    \end{align*}
    This proves all three claims.

\subsection{Proof of Lemma \ref{lemma:meanzero}}\label{sec:proof:lemma:meanzero}

Condition on $\calG$.  We use the original term-by-term symmetry argument
from the local projector expansion.  Condition further on
$\G_k=\bar\U_k^\top\E_k$.  Because $\E_k$ has independent Gaussian
entries and $[\bar\U_k,\bar\U_{k\perp}]$ is orthogonal, the matrix
$\H_k=\bar\U_{k\perp}^\top\E_k$ is independent of $\G_k$ and has the
same conditional distribution as $-\H_k$.

By \eqref{eq:UxiU}, under the replacement $\H_k\mapsto-\H_k$,
\begin{align*}
\bar\U_k^\top\bXi_k\bar\U_k
&\text{ is unchanged},\\
\bar\U_{k\perp}^\top\bXi_k\bar\U_{k\perp}
&\text{ is unchanged},\\
\bar\U_k^\top\bXi_k\bar\U_{k\perp}
&\text{ changes sign},
\end{align*}
and the transpose block changes sign as well.  The event $\calE_k$ is
unchanged: the two quantities involving $\H_k\bar\R_k$ and
$\G_k\H_k^\top$ may change sign, but their operator norms do not, and all
other quantities in \eqref{eq:local-good-event} are unchanged.

Now expand the cross block by \eqref{eq:Deltak}:
\begin{align*}
\bar\U_k^\top\bDelta_k\bar\U_{k\perp}\,1(\calE_k)
=\sum_{l\geq1}\sum_{\s\in\SS_l}
\bar\U_k^\top\S_{k,l}(\s)\bar\U_{k\perp}\,1(\calE_k).
\end{align*}
A summand can be nonzero only when $s_1>0$ and $s_{l+1}=0$.  Starting
from $s_1>0$ and ending at $s_{l+1}=0$, the indicator of the event
$s_j>0$ changes value an odd number of times as $j$ runs from $1$ to
$l$.  Equivalently, an odd number of the $l$ perturbation blocks in this
summand are of the form
$\bar\U_k^\top\bXi_k\bar\U_{k\perp}$ or its transpose.  Consequently,
for every such $l$ and $\s$,
\begin{align*}
\bar\U_k^\top\S_{k,l}(\s;\G_k,-\H_k)\bar\U_{k\perp}
=-\bar\U_k^\top\S_{k,l}(\s;\G_k,\H_k)\bar\U_{k\perp}.
\end{align*}
Multiplication by $1(\calE_k)$ preserves this oddness.  Hence each
summand has conditional expectation zero with respect to $\H_k$, given
$\calG$ and $\G_k$.

It remains only to justify summing these zero expectations.  The proof of
Lemma~\ref{lemma:boundDeltak} gives, on $\calE_k$,
\begin{align*}
\sum_{\s\in\SS_l}
\op{\bar\U_k^\top\S_{k,l}(\s)\bar\U_{k\perp}}
\leq (4C\varepsilon_k)^l.
\end{align*}
The standing perturbation condition makes
$\sum_{l\geq1}(4C\varepsilon_k)^l<\infty$.  Thus the series is absolutely
convergent on $\calE_k$, and conditional expectation may be interchanged
with the sum.  We obtain
\begin{align*}
\EE\bbrac{
\bar\U_k^\top\bDelta_k\bar\U_{k\perp}\,1(\calE_k)
\,\Bigm|\,\calG,\G_k}=0.
\end{align*}
Taking expectation over $\G_k$ proves
\begin{align*}
\EE_{\calG}\bbrac{
\bar\U_k^\top\bDelta_k\bar\U_{k\perp}\,1(\calE_k)}=0.
\end{align*}

\subsection{Proof of Lemma \ref{lemma:expectation-upper-bound}}

Condition on
$\calG=\sigma\brac{\U,\ebrac{\U_j,\V_j,\W_j}_{j=1}^{K}}$ throughout.
Recall that
\begin{align*}
\R_k
&=\U^\top\bar\U_k\bar\U_k^\top\bDelta_k
  \bar\U_k\bar\U_k^\top\U_\perp\bLa^{-1},\\
\bDelta_k
&=\bar\U_k\bar\U_k^\top-\tilde\U_k\tilde\U_k^\top.
\end{align*}
Set
\begin{align*}
\B_k
:=\EE_{\calG}\bbrac{
\bar\U_k^\top\bDelta_k\bar\U_k\,1(\calE_k)}.
\end{align*}
Because
\begin{align*}
\U^\top\bar\U_k
=\begin{bmatrix}\I_r&0\end{bmatrix},
\qquad
\bar\U_k^\top\U_\perp
=\begin{bmatrix}0\\ \U_k^\top\U_\perp\end{bmatrix},
\end{align*}
we have
\begin{align*}
\EE_{\calG}\bbrac{\R_k\,1(\calE_k)}
=\begin{bmatrix}\I_r&0\end{bmatrix}\B_k
 \begin{bmatrix}0\\ \U_k^\top\U_\perp\end{bmatrix}\bLa^{-1}.
\end{align*}
It is therefore enough to prove
\begin{align}\label{eq:Bk12-bound}
\op{[\B_k]_{12}}
\leq C\varepsilon_k^2
\brac{\delta_k(1-\delta_k^2)^{-1}\wedge1}.
\end{align}
Indeed, \eqref{eq:Bk12-bound},
$\op{\U_k^\top\U_\perp}\leq1$, and
$\op{\bLa^{-1}}\leq\theta^{-1}$ imply the stated bound.

Let
\begin{align*}
\Pi_k:=\begin{bmatrix}\I_r&0\\0&-\I_{r_k}\end{bmatrix}.
\end{align*}
For any matrix $\A\in\RR^{\bar r_k\times\bar r_k}$, write
\begin{align*}
\A_{\rm bd}:=\frac{\A+\Pi_k\A\Pi_k}{2},
\qquad
\A_{\rm od}:=\frac{\A-\Pi_k\A\Pi_k}{2}.
\end{align*}
The first matrix is block diagonal and the second is block off diagonal
with respect to the decomposition
$\bar\U_k=[\U,\U_k]$.  Lemma~\ref{lemma:block-offdiag-inverse-power}
therefore gives, for every positive half-integer $b$ that occurs below,
\begin{align}\label{eq:Gamma-offdiag-bound}
\op{(\bGa_k^{-b})_{\rm od}}
\leq C^{2b+1}\lambda_{k,\min}^{-2b}
\brac{\delta_k(1-\delta_k^2)^{-1}\wedge1}.
\end{align}

We next examine one term of the local projector expansion.  By
Lemma~\ref{lemma:boundDeltak}, the series in \eqref{eq:Deltak} is
absolutely convergent on $\calE_k$, so conditional expectation may be
interchanged with the sum.  The compression
$\bar\U_k^\top\S_{k,l}(\s)\bar\U_k$ can be nonzero only when
$s_1>0$ and $s_{l+1}>0$; hence $l\geq2$.  Define the four normalized
perturbation blocks
\begin{align*}
\calX_{k,11}
&:=\bGa_k^{-1/2}\bar\U_k^\top\bXi_k\bar\U_k\bGa_k^{-1/2},\\
\calX_{k,10}
&:=\bGa_k^{-1/2}\bar\U_k^\top\bXi_k\bar\U_{k\perp},\\
\calX_{k,01}
&:=\bar\U_{k\perp}^\top\bXi_k\bar\U_k\bGa_k^{-1/2},\\
\calX_{k,00}
&:=\bar\U_{k\perp}^\top\bXi_k\bar\U_{k\perp}.
\end{align*}
Equation \eqref{defu} gives, on $\calE_k$,
\begin{align}\label{eq:normalized-four-block-bounds}
\op{\calX_{k,11}}
&\leq C\varepsilon_k,\\
\op{\calX_{k,10}}+\op{\calX_{k,01}}
&\leq C\lambda_{k,\min}\varepsilon_k,\\
\op{\calX_{k,00}}
&\leq C\lambda_{k,\min}^2\varepsilon_k.
\end{align}
For each $j$ such that $s_j>0$, use one factor
$\bGa_k^{-1/2}$ for each adjacent perturbation block.  The remaining
factor at that index is $\bGa_k^{-(s_j-1/2)}$ at an endpoint and
$\bGa_k^{-(s_j-1)}$ at an interior index.  These exponents are
nonnegative.  Consequently, every term after left and right multiplication by $\bar\U_k^\top$ and $\bar\U_k$ can be written as
a product of the four blocks above and deterministic nonnegative
half-integer powers of $\bGa_k^{-1}$.

We now make the symmetry of these products explicit.  Define
\begin{align*}
\Q_k
:=\bar\R_k\bar\D_k^\top
  (\bar\D_k\bar\D_k^\top)^{-1/2},
\end{align*}
so that $\Q_k^\top\Q_k=\I_{\bar r_k}$, and extend $\Q_k$ to an
orthogonal matrix $[\Q_k,\Q_{k\perp}]$.  Write
\begin{align*}
\G_{k1}:=\G_k\Q_k,
\quad
\G_{k2}:=\G_k\Q_{k\perp},
\quad
\H_{k1}:=\H_k\Q_k,
\quad
\H_{k2}:=\H_k\Q_{k\perp}.
\end{align*}
Since
$\bar\D_k\bar\R_k^\top=\mat{\V_k^\top\\\W_k^\top}$ and
$\bGa_k=\bar\D_k\bar\D_k^\top$, direct multiplication gives
\begin{align}\label{eq:normalized-block-explicit}
\calX_{k,11}
&=\G_{k1}^\top\bGa_k^{-1/2}
  +\bGa_k^{-1/2}\G_{k1}
  +\bGa_k^{-1/2}
   (\G_k\G_k^\top-d_k\sigma_k^2\I)
   \bGa_k^{-1/2},\\
\calX_{k,10}
&=\H_{k1}^\top+\bGa_k^{-1/2}\G_k\H_k^\top,\\
\calX_{k,01}
&=\calX_{k,10}^\top,\\
\calX_{k,00}
&=\H_k\H_k^\top-d_k\sigma_k^2\I.
\end{align}
The centered covariance matrices in these formulas are kept intact,
because those are the matrices controlled by $\calE_k$.

Expand one such term only in the following two finite ways:
first use the sums in \eqref{eq:normalized-block-explicit}, and then replace
every positive deterministic inverse power by the sum of its block-diagonal
and block-off-diagonal parts.  There are at most $C^l$ resulting terms for
an order-$l$ term.

Consider the term in which every deterministic inverse power is replaced
by its block-diagonal part.  The transformation
\begin{align}\label{eq:noise-block-involution}
(\G_{k1},\G_{k2},\H_{k1},\H_{k2})
\longmapsto
(\Pi_k\G_{k1}\Pi_k,\Pi_k\G_{k2},
 \H_{k1}\Pi_k,\H_{k2})
\end{align}
preserves their joint Gaussian distribution.  To verify that it also
preserves $\calE_k$, let
\begin{align*}
\boldsymbol{\Omega}_k
:=\Q_k\Pi_k\Q_k^\top+\Q_{k\perp}\Q_{k\perp}^\top.
\end{align*}
This is an orthogonal matrix, and \eqref{eq:noise-block-involution}
corresponds in the original coordinates to
\begin{align*}
\G_k\longmapsto\Pi_k\G_k\boldsymbol{\Omega}_k,
\qquad
\H_k\longmapsto\H_k\boldsymbol{\Omega}_k.
\end{align*}
The matrices $\Q_k$ and $\bar\R_k$ have the same column space and both
have orthonormal columns, so $\Q_k^\top\bar\R_k$ is orthogonal.  Hence
\begin{align*}
\op{\Pi_k\G_k\boldsymbol{\Omega}_k\bar\R_k}
&=\op{\G_k\bar\R_k},
&
\op{\H_k\boldsymbol{\Omega}_k\bar\R_k}
&=\op{\H_k\bar\R_k}.
\end{align*}
Moreover,
\begin{align*}
(\Pi_k\G_k\boldsymbol{\Omega}_k)(\Pi_k\G_k\boldsymbol{\Omega}_k)^\top
&=\Pi_k\G_k\G_k^\top\Pi_k,\\
(\Pi_k\G_k\boldsymbol{\Omega}_k)(\H_k\boldsymbol{\Omega}_k)^\top
&=\Pi_k\G_k\H_k^\top,\\
(\H_k\boldsymbol{\Omega}_k)(\H_k\boldsymbol{\Omega}_k)^\top
&=\H_k\H_k^\top.
\end{align*}
Thus every norm in \eqref{eq:local-good-event} is unchanged.

Every block-diagonal deterministic factor commutes with $\Pi_k$.  From
\eqref{eq:normalized-block-explicit}, the transformation
\eqref{eq:noise-block-involution} sends
\begin{align*}
\calX_{k,11}&\longmapsto\Pi_k\calX_{k,11}\Pi_k,
&
\calX_{k,10}&\longmapsto\Pi_k\calX_{k,10},\\
\calX_{k,01}&\longmapsto\calX_{k,01}\Pi_k,
&
\calX_{k,00}&\longmapsto\calX_{k,00}
\end{align*}
for the term under consideration. Since $s_1>0$ and $s_{l+1}>0$, the transformed product is conjugated by $\Pi_k$.  Its
conditional expectation, including the invariant indicator $1(\calE_k)$,
therefore satisfies
\begin{align*}
\EE_{\calG}\bbrac{\text{term}\cdot1(\calE_k)}
=\Pi_k\EE_{\calG}\bbrac{\text{term}\cdot1(\calE_k)}\Pi_k.
\end{align*}
This expectation is block diagonal, so its $(1,2)$ block is zero.

Every other term in the finite expansion contains at least one
block-off-diagonal deterministic inverse power.  Bound one such factor by
\eqref{eq:Gamma-offdiag-bound}; bound every other factor
$\bGa_k^{-b}$ by $\lambda_{k,\min}^{-2b}$.  We now verify the remaining
powers explicitly.  Let $e_{11}$, $e_{10}$, and $e_{00}$ be the numbers
of consecutive index pairs of the three types used in the proof of
Lemma~\ref{lemma:boundDeltak}.  The normalized perturbation blocks in
\eqref{eq:normalized-four-block-bounds} contribute at most
\begin{align*}
(C\varepsilon_k)^l
\lambda_{k,\min}^{e_{10}+2e_{00}}.
\end{align*}
The inverse powers remaining after normalization contribute at most
\begin{align*}
\lambda_{k,\min}^{-2l+2e_{11}+e_{10}}.
\end{align*}
These powers cancel because
$e_{11}+e_{10}+e_{00}=l$.  The one factor bounded by
\eqref{eq:Gamma-offdiag-bound} contributes the additional multiplier
$\delta_k(1-\delta_k^2)^{-1}\wedge1$.  Each expanded term is therefore
bounded on $\calE_k$ by
\begin{align*}
C^l\varepsilon_k^l
\brac{\delta_k(1-\delta_k^2)^{-1}\wedge1}.
\end{align*}
The separate summands in \eqref{eq:normalized-block-explicit} obey the
same bounds as their grouped blocks: this follows directly from
$\op{\G_{k1}}=\op{\G_k\bar\R_k}$,
$\op{\H_{k1}}=\op{\H_k\bar\R_k}$, and the remaining bounds in
\eqref{eq:local-good-event}.  Thus no cancellation inside a centered
covariance block has been discarded in the preceding estimate.

It follows that, for every $l\geq2$ and $\s\in\SS_l$ with
$s_1,s_{l+1}>0$,
\begin{align*}
\op{\big[
\EE_{\calG}\{\bar\U_k^\top\S_{k,l}(\s)\bar\U_k\,1(\calE_k)\}
\big]_{12}}
\leq C^l\varepsilon_k^l
\brac{\delta_k(1-\delta_k^2)^{-1}\wedge1}.
\end{align*}
Finally, $|\SS_l|\leq4^l$, and the standing perturbation condition makes
$\varepsilon_k$ sufficiently small.  Therefore
\begin{align*}
\op{[\B_k]_{12}}
&\leq
\brac{\delta_k(1-\delta_k^2)^{-1}\wedge1}
\sum_{l\geq2}(4C\varepsilon_k)^l\\
&\leq C\varepsilon_k^2
\brac{\delta_k(1-\delta_k^2)^{-1}\wedge1}.
\end{align*}
This proves \eqref{eq:Bk12-bound}, and hence
\begin{align*}
\op{\EE_{\calG}\bbrac{\R_k\,1(\calE_k)}}
\leq C\varepsilon_k^2
\brac{\delta_k(1-\delta_k^2)^{-1}\wedge1}\theta^{-1}.
\end{align*}

\subsection{Proof of Lemma \ref{lemma:concentrationR:general}}

Recall $\R_k = \U^\top\bar\U_k\bar\U_k^\top\bDelta_k\bar\U_k\bar\U_k^\top\U_{\perp}\bLa^{-1}$. 
Expanding $\bDelta_k$ (see \eqref{eq:Deltak}) and we have 
\begin{align*} 
\sum_{k=1}^{K}w_k\R_k\cdot 1(\calE_k)= \sum_{k=1}^{K}\underbrace{w_k\U^\top\bar\U_{k}\sum_{l\geq 2}\sum_{\s\in\SS_l}\underline{\bar\U_{k}^\top\S_{k,l}(\s)\bar\U_k}\bar\U_k^\top\U_{\perp}\bLa^{-1}\cdot 1(\calE_k)}_{:=\W_k}.
\end{align*}
Here we use the fact when $l=1$, $\bar\U_{k}^\top\S_{k,1}(\s)\bar\U_k = 0$. 
Condition on $\calG$.  Let $\bar\W_k$ be the self-adjoint dilation of
$\W_k$,
\begin{align*}
    \bar\W_k = \mat{0&\W_k\\ \W_k^\top&0}.
\end{align*}
The termwise local projector bound and $|\SS_l|\leq4^l$ imply, for a
sufficiently small local perturbation,
\begin{align*}
    \op{\W_k}
    &\leq w_k\theta^{-1}
       \sum_{l\geq2}(4C\varepsilon_k)^l
     \leq Cw_k\theta^{-1}\varepsilon_k^2.
\end{align*}
Consequently,
\begin{align*}
\op{\bar\W_k-\EE_{\calG}\bar\W_k}
&\leq Cw_k\theta^{-1}\varepsilon_k^2,\\
(\bar\W_k-\EE_{\calG}\bar\W_k)^2
&\preceq Cw_k^2\theta^{-2}\varepsilon_k^4\I.
\end{align*}
The centered
noise-dependent dilations are independent across $k$ conditionally on
$\calG$.  Matrix Hoeffding therefore gives
\begin{align*}
    \PP\bigg(\op{\sum_{k=1}^{K}(\bar\W_k- \EE_{\calG}\bar\W_k)} \geq C\sqrt{\sum_{k=1}^{K}w_k^2\varepsilon_k^{4}\theta^{-2}}\sqrt{\log n}\,\Bigm|\,\calG \bigg)\leq n^{-10}.
\end{align*}
Since the norm of a rectangular matrix equals the norm of its self-adjoint
dilation, this proves the claim.

\subsection{Proof of Lemma \ref{lemma:concentrationUDU}}
We state a more general version of Lemma \ref{lemma:concentrationUDU}. 
\begin{lemma}\label{lemma:concentration-general}
Let $\N_k, \O_k$, $k\in[K]$, be $\calG$-measurable matrices of compatible
dimensions, with the row dimension of $\O_k$ and the column dimension of
$\N_k$ at most $n$, and suppose $\op{\O_k}\leq 1$. Then, for every
$p\in(0,1)$, conditionally on $\calG$, with probability at least $1-p$,
\begin{align*}
&\op{\sum_{k=1}^{K}\O_k\cdot\underline{\bar\U_{k}^\top\bDelta_{k}\bar\U_{k\perp}}\cdot\N_k\cdot 1(\calE_k)}\\
&\hspace{3cm}\leq C\sqrt{\sum_{k=1}^{K}n^{-1}\big(\tr(\N_k^\top\N_k)+\bar r_k\op{\N_k^\top\N_k}\big)\varepsilon_k^{2}}\sqrt{\log (n\vee p^{-1})}.
\end{align*}
\end{lemma}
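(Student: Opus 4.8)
The plan is to view $\sum_{k=1}^{K}A_k$, with $A_k:=\O_k\,\bar\U_k^\top\bDelta_k\bar\U_{k\perp}\,\N_k\cdot 1(\calE_k)$, as a sum of independent mean-zero random matrices that, up to genuinely lower-order corrections, is a matrix-valued Gaussian series; its operator norm then concentrates sub-Gaussianly with the stated variance proxy and, crucially, carries \emph{no} additive $\max_k\op{A_k}\cdot\log(n\vee p^{-1})$ term. First I would record that the $A_k$ are independent and, since $\O_k,\N_k$ are deterministic, $\EE A_k=\O_k\,\EE[\bar\U_k^\top\bDelta_k\bar\U_{k\perp}\cdot 1(\calE_k)]\,\N_k=0$ by Lemma~\ref{lemma:meanzero}. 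Using the expansion \eqref{eq:Deltak}, the identity $\bGa_k^{-1}\bar\D_k=\bar\D_k^{-\top}$ (from $\bGa_k=\bar\D_k\bar\D_k^\top$), the bound $\op{\bXi_k}\lesssim\lambda_{k,\min}^2$ valid on $\calE_k$, and the scaled estimates \eqref{defu}, I would decompose, on $\calE_k$,
\begin{align*}
\bar\U_k^\top\bDelta_k\bar\U_{k\perp}
=\bar\D_k^{-\top}\bar\R_k^\top\H_k^\top+\bGa_k^{-1}\G_k\H_k^\top+\bDelta_k',
\qquad
\op{\bDelta_k'}\cdot 1(\calE_k)\lesssim\varepsilon_k^2,
\end{align*}
where $\G_k=\bar\U_k^\top\E_k$ and $\H_k=\bar\U_{k\perp}^\top\E_k$ are \emph{independent} Gaussian matrices with i.i.d.\ $N(0,\sigma_k^2)$ entries, and the remainder $\bDelta_k'$ gathers the order-$\ge 2$ terms of \eqref{eq:Deltak} (each $\lesssim\varepsilon_k^{\,l}$ on $\calE_k$); the same $\H_k\mapsto-\H_k$ symmetry used in Lemma~\ref{lemma:meanzero} shows $\EE[\bDelta_k'\cdot 1(\calE_k)]=0$ as well.

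On the event $\bigcap_k\calE_k$, which by \eqref{event-1st}--\eqref{event2} has probability at least $1-C\sum_k e^{-n\wedge d_k}$, the indicators may be dropped, and $\sum_k\O_k\bar\D_k^{-\top}\bar\R_k^\top\H_k^\top\N_k$ is exactly a Gaussian series in $\{\H_k\}$. Using $\EE[\H_k^\top C\H_k]=\sigma_k^2\tr(C)\I$, $\EE[\H_k C\H_k^\top]=\sigma_k^2\tr(C)\I$, $\bar\D_k^{-\top}\bar\D_k^{-1}=\bGa_k^{-1}$, and $\tr(\O_k^\top\O_k\bGa_k^{-1})\le\bar r_k\op{\O_k}^2\lambda_{k,\min}^{-2}$, the order-one contribution has $\op{\EE[A_kA_k^\top]}\lesssim\sigma_k^2\lambda_{k,\min}^{-2}\op{\O_k}^2\tr(\N_k^\top\N_k)$ and $\op{\EE[A_k^\top A_k]}\lesssim\sigma_k^2\lambda_{k,\min}^{-2}\op{\O_k}^2\bar r_k\op{\N_k^\top\N_k}$. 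The bilinear term $\bGa_k^{-1}\G_k\H_k^\top$ contributes the same bounds after integrating first over $\H_k$ and then over $\G_k$ (using $\EE[\G_k\G_k^\top]=d_k\sigma_k^2\I$), the extra $d_k\sigma_k^2$ being absorbed by $n^{-1}\varepsilon_k^2\gtrsim d_k\lambda_{k,\min}^{-4}\sigma_k^4$. Since \eqref{theta-max} gives $\sigma_k^2\lambda_{k,\min}^{-2}\lesssim n^{-1}\varepsilon_k^2$ and $\op{\O_k}\le 1$, both variance proxies are bounded by
\begin{align*}
v:=C\sum_{k=1}^{K}n^{-1}\big(\tr(\N_k^\top\N_k)+\bar r_k\op{\N_k^\top\N_k}\big)\varepsilon_k^2.
\end{align*}

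I would then assemble the tail bound. For the order-one part, the Gaussian matrix-series estimate (matrix Khintchine together with the $1$-Lipschitz Gaussian concentration of $X\mapsto\op{X}$) gives, with probability at least $1-p/3$, a bound $\lesssim\sqrt{v}\,\sqrt{\log(n\vee p^{-1})}$ — with no linear deviation term, since the summed matrix has both dimensions $O(n)$. For the bilinear part one conditions on $\{\G_k\}$, applies the same estimate in $\{\H_k\}$ with proxy controlled via $\op{\G_k}\lesssim\sqrt{n\wedge d_k}\,\sigma_k$ on $\calE_k$, and then concentrates the resulting quadratic functional of $\G_k$, again obtaining $\lesssim\sqrt{v}\sqrt{\log(n\vee p^{-1})}$. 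The remainder $\sum_k\O_k\bDelta_k'\N_k\cdot 1(\calE_k)$ is mean-zero and bounded by $\op{\O_k}\varepsilon_k^2\op{\N_k}\cdot 1(\calE_k)$, so matrix Bernstein is harmless here: its deviation scales with the genuinely smaller $\varepsilon_k^2$, and under $\varepsilon_k\le c$ it is dominated by $\sqrt{v}\sqrt{\log(n\vee p^{-1})}$ (repeating the conditional-Gaussian argument order by order sharpens this if a bare union bound over orders does not suffice). A union bound over the three contributions and over $\bigcap_k\calE_k$ yields the claim.

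The hard part is obtaining a concentration inequality with \emph{no} additive $\max_k\op{A_k}\cdot\log(n\vee p^{-1})$ term: a black-box matrix Bernstein applied to the bounded summands $A_k$ produces exactly such a term with $\op{A_k}\asymp\op{\O_k}\varepsilon_k\op{\N_k}$, which is \emph{not} dominated by $\sqrt{v}\sqrt{\log(n\vee p^{-1})}$ in general (term by term this would force $\log(n\vee p^{-1})\lesssim n^{-1}\bar r_k$). This is why one must exploit the polynomial-in-Gaussian structure of $\bar\U_k^\top\bDelta_k\bar\U_{k\perp}$ rather than only its magnitude, carry the bookkeeping across the successive chaos orders of \eqref{eq:Deltak}, and do so while the truncation $1(\calE_k)$ — needed both to make the Neumann-type expansion valid and to keep the norms bounded — breaks exact Gaussianity; verifying that every chaos order contributes a variance of the same form, with an extra $\varepsilon_k^2$ gain beyond the leading order, is the bulk of the remaining work.
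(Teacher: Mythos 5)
You correctly identify the crux of the lemma (obtaining a purely sub-Gaussian tail with variance proxy $v$ and no additive $\max_k\op{A_k}\log(n\vee p^{-1})$ term), and your treatment of the first chaos order — the Gaussian series in $\H_k$ plus the bilinear $\G_k\H_k^\top$ term, with mean-zeroness from the $\H_k\mapsto-\H_k$ symmetry and the $d_k\sigma_k^2$ factor absorbed into $n^{-1}\varepsilon_k^2$ — is sound and matches the leading-order content of the paper's bound. However, there is a genuine gap in your handling of the remainder $\sum_k\O_k\bDelta_k'\N_k\cdot 1(\calE_k)$. Matrix Bernstein applied to these bounded summands produces an additive term of order $\max_k\varepsilon_k^2\op{\N_k}\cdot\log(n\vee p^{-1})$, and this is \emph{not} dominated by $\sqrt{v\log(n\vee p^{-1})}$: comparing term by term against $\sqrt{n^{-1}\bar r_k}\,\op{\N_k}\varepsilon_k\sqrt{\log(n\vee p^{-1})}$, domination would require $\varepsilon_k\sqrt{\log(n\vee p^{-1})}\lesssim\sqrt{\bar r_k/n}$, i.e.\ $\snr_k\gtrsim n\sqrt{\log n}/\sqrt{\bar r_k}$, far stronger than the assumed $\snr_k\gtrsim\kappa_k\sqrt{n}+(nd_k)^{1/4}$ under which $\varepsilon_k$ is merely a small constant. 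So the very objection you raise against black-box Bernstein for the full sum applies verbatim to your remainder, only with $\varepsilon_k^2$ in place of $\varepsilon_k$. Your parenthetical escape ("repeat the conditional-Gaussian argument order by order") is precisely where the real work lies, and it is left undone — including the issue you flag but do not resolve, namely that conditioning on $\calE_k$ destroys exact Gaussianity of $\H_k$.

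For comparison, the paper avoids this by never splitting off a "remainder" at all: for \emph{every} chaos order $l$ and every $\s\in\SS_l$ it bounds the MGF of the dilation $\bar\W_k$ directly, using (i) vanishing odd moments (the chain is odd in $\H_k$ conditionally on $\G_k$, and $1(\calE_k)$ is even), and (ii) a sub-exponential bound $\psione{\op{\W_k^\top\W_k}}\lesssim n^{-1}\varepsilon_k^{2l}\bigl(\tr(\N_k^\top\N_k)+\bar r_k\op{\N_k^\top\N_k}\bigr)$ (Lemma \ref{lemma:psione-norm}), which converts into $\EE\bar\W_k^{2p}\preceq[C\,n^{-1}\varepsilon_k^{2l}(\cdots)]^p\,p!\,\I$ and hence a Hoeffding-type tail with variance proxy only; the geometric decay $\varepsilon_k^{2l}$ then absorbs the union bound over $l$ and $\s$. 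This moment-method route works uniformly in the presence of the truncation because it only requires one-sided moment bounds on $\op{\W_k^\top\W_k}\cdot 1(\calE_k)$, not conditional Gaussianity. To complete your argument you would need to supply an analogue of this machinery for all orders $l\ge 2$; as written, the proof is incomplete at that step.
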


\begin{proof}
Condition throughout on $\calG$.  By Lemma \ref{lemma:meanzero},
$\EE_{\calG}\bbrac{\bar\U_{k}^\top\bDelta_{k}\bar\U_{k\perp}\cdot 1(\calE_k)} = 0$.
We have 
\begin{align*} 
\sum_{k=1}^{K}\O_k\cdot\underline{\bar\U_{k}^\top\bDelta_{k}\bar\U_{k\perp}}\cdot\N_k\cdot 1(\calE_k)= \sum_{k=1}^{K}\O_k\sum_{l\geq 1}\sum_{\s\in\SS_l}\underline{\bar\U_{k}^\top\S_{k,l}(\s)\bar\U_{k\perp}}\N_k\cdot 1(\calE_k).
\end{align*}
Now we consider
\begin{align*}
    \op{ \sum_{k=1}^{K}\underbrace{\O_k{\underline{\bar\U_{k}^\top\S_{k,l}(\s)\bar\U_{k\perp}}}\N_k\cdot 1(\calE_k)}_{:=\W_k}}
\end{align*}
for fixed $l\geq 1$ and $\s\in\SS_l$. 
Let $\bar\W_k$ be the symmetric version of $\W_k$:
\begin{align*}
    \bar\W_k = \mat{0&\W_k\\ \W_k^\top&0}. 
\end{align*}
Then
\begin{align*}
    \EE\exp(\lambda\bar\W_k) = \EE\sum_{p \geq 0}\frac{\lambda^p\bar\W_k^p}{p!}.
\end{align*}
{For a nonzero summand of $\bar\U_k^\top\S_{k,l}(\s)\bar\U_{k\perp}$, $s_1>0$ and $s_{l+1}=0$.
Therefore the number of indices $j\in[l]$ for which exactly one of
$s_j,s_{j+1}$ is positive is odd. By \eqref{eq:UxiU}, precisely those
perturbation blocks change sign under $\H_k\mapsto-\H_k$; all other
blocks and $1(\calE_k)$ are unchanged. Hence
$\W_k\mapsto-\W_k$, so $\W_k\stackrel{d}{=}-\W_k$ and all its odd
moments vanish.}
Since the odd moments of $\bar\W_k$ vanish, we have 
\begin{align*}
    \EE\exp(\lambda\bar\W_k) = \EE\sum_{p \geq 0}\frac{\lambda^{2p}\bar\W_k^{2p}}{(2p)!}. 
\end{align*}
Notice 
\begin{align*}
    \bar\W_k^{2} = \mat{\W_k\W_k^\top &0\\ 0& \W_k^\top\W_k}\leq {\op{\W_k\W_k^\top}}\I. 
\end{align*}
Here $\A\leq \B$ means $\B-\A$ is SPSD. And therefore 
\begin{align*}
    \EE \bar\W_k^{2p} \leq \EE{\op{\W_k\W_k^\top}}^p\I.
\end{align*}
Next we bound the $\psi_1$ norm of ${\op{\W_k\W_k^\top}}$. 
To write the factorization explicitly, let
\begin{align*}
    j_0:=\max\{j\in[l]:s_j>0\},
    \qquad
    a:=l-j_0.
\end{align*}
Because $s_{l+1}=0$, we have
$s_{j_0}>0$ and $s_{j_0+1}=\cdots=s_{l+1}=0$.  The perturbation block
between the $j_0$th and $(j_0+1)$st factors is therefore
\begin{align*}
\Y_k
:=(\bar\D_k\bar\D_k^\top)^{-1/2}
  (\bar\D_k\bar\R_k^\top+\G_k)\H_k^\top,
\end{align*}
and each of the following $a$ perturbation blocks equals
\begin{align*}
\Z_k:=\H_k\H_k^\top-d_k\sigma_k^2\I.
\end{align*}
Consequently, up to its deterministic sign, the term has the form
$\B_{k,l,\s,a}^\top\Y_k\Z_k^a$.  Applying the explicit term-by-term
calculation in the proof of Lemma~\ref{lemma:boundDeltak} to the first
$l-a-1$ perturbation blocks, while leaving $\Y_k\Z_k^a$ unchanged,
gives on $\calE_k$
\begin{align*}
\op{\B_{k,l,\s,a}}
\leq C^l\varepsilon_k^{l-a-1}
\lambda_{k,\min}^{-2a-1}.
\end{align*}
The factor $C^l$ is included in the order-$l$ constants below.
Since $\op{\O_k}\leq 1$, 
\begin{align*}
    {\op{\W_k\W_k^\top}}
    &\leq \varepsilon_k^{2l-2a-2}\lambda_{k,\min}^{-4a-2}
    \op{\Y_k\Z_k^a{\N_k\N_k^\top}\Z_k^a\Y_k^\top}\cdot 1(\calE_k).
\end{align*}
{
\begin{lemma}\label{lemma:psione-norm}
For every integer $a\geq0$ and every SPSD matrix $\M_0$,
\begin{align*}
&\lambda_{k,\min}^{-4a-2}
\psione{
\op{
\Y_k\Z_k^a\M_0\Z_k^a\Y_k^\top
\cdot 1(\calE_k)
}
}\\
&\qquad\leq
C^{a+1}n^{-1}\varepsilon_k^{2a+2}
\bigl(\tr(\M_0)+\bar r_k\op{\M_0}\bigr),
\end{align*}
where $C>1$ is an absolute constant.
\end{lemma}
}
{Apply Lemma \ref{lemma:psione-norm} with
$\M_0=\N_k\N_k^\top$. Since
$\tr(\N_k\N_k^\top)=\tr(\N_k^\top\N_k)$ and
$\op{\N_k\N_k^\top}=\op{\N_k^\top\N_k}$,} 
{
\begin{align*}
    \psione{{\op{\W_k\W_k^\top}}}
    \leq
    C^{l+1}n^{-1}\varepsilon_k^{2l}
    \bigl(
      \tr(\N_k^\top\N_k)
      +\bar r_k\op{\N_k^\top\N_k}
    \bigr).
\end{align*}
}
And thus using the moments bound for sub-exponential random variables, we have 
\begin{align*}
    \EE \bar\W_k^{2p} \leq \EE{\op{\W_k\W_k^\top}}^p\I\leq {\big[C^{l+1}n^{-1}\varepsilon_k^{2l}\big(\tr(\N_k^\top\N_k)+\bar r_k\op{\N_k^\top\N_k}\big)\big]^p(p!)\I}. 
\end{align*}
Therefore 
\begin{align*}
    \EE\exp(\lambda\bar\W_k) &= \EE\sum_{p \geq 0}\frac{\lambda^{2p}\bar\W_k^{2p}}{(2p)!}\leq \sum_{p \geq 0}\frac{\lambda^{2p}{\big[C^{l+1}n^{-1}\varepsilon_k^{2l}\big(\tr(\N_k^\top\N_k)+\bar r_k\op{\N_k^\top\N_k}\big)\big]^p}}{p!}\I\\
    &= \exp\brac{\frac{1}{2}\lambda^2 \V_k},
\end{align*}
where ${\V_k = 2C^{l+1}n^{-1}\varepsilon_k^{2l}\big(\tr(\N_k^\top\N_k)+\bar r_k\op{\N_k^\top\N_k}\big)\I}$.
The dilations are independent across $k$. Applying the matrix Laplace bound to both signs of the self-adjoint sum and taking a union bound gives
\begin{align*}
    \PP\bigg(\op{\sum_{k=1}^{K}\bar\W_k} \geq \sigma_l\sqrt{\log(2n)+ \log p_l^{-1}}
    \bigg)
    \leq p_l
\end{align*}
where ${\sigma_l^2 \leq C^{l+1}\sum_{k=1}^{K}n^{-1}\varepsilon_k^{2l}\big(\tr(\N_k^\top\N_k)+\bar r_k\op{\N_k^\top\N_k}\big)}$ and $p_l>0$ is to be determined. 

Now taking union bound over all $l\geq 1$, and $\s\in\SS_l$, we get with probability exceeding $1 - \sum_{l\geq 1}4^lp_l$, 
\begin{align*}
\op{\sum_{k=1}^{K}\O_k\underline{\bar\U_{k}^\top\bDelta_k\bar\U_{k\perp}}\N_k\cdot 1(\calE_k)} \leq \sum_{l\geq 1}4^l\sigma_l\sqrt{\log(2n)+ \log p_l^{-1}}.
\end{align*}
Set $p_l:=p\,2^{-l}/|\SS_l|$. Then
$\sum_{l\geq1}|\SS_l|p_l=p$ and
$\log p_l^{-1}\leq\log p^{-1}+Cl$.  The resulting factor $\sqrt l$ is absorbed into the exponential-in-$l$ constants. We conclude with probability exceeding $1-p$, 
\begin{align*}
&\quad\op{\sum_{k=1}^{K}\O_k\underline{\bar\U_{k}^\top\bDelta_k\bar\U_{k\perp}}\N_k\cdot 1(\calE_k)} \\
&\leq \sum_{l\geq 1}C^l\sqrt{\sum_{k=1}^{K}n^{-1}\big(\tr(\N_k^\top\N_k)+\bar r_k\op{\N_k^\top\N_k}\big)\varepsilon_k^{2l}}\sqrt{\log n+\log p^{-1}}\\
&\leq \sum_{l\geq 1}\frac{1}{2^l}\sqrt{\sum_{k=1}^{K}n^{-1}\big(\tr(\N_k^\top\N_k)+\bar r_k\op{\N_k^\top\N_k}\big)\varepsilon_k^{2}}\sqrt{\log n+\log p^{-1}}\\
&\leq 2\sqrt{\sum_{k=1}^{K}n^{-1}\big(\tr(\N_k^\top\N_k)+\bar r_k\op{\N_k^\top\N_k}\big)\varepsilon_k^{2}}\sqrt{\log n+\log p^{-1}},
\end{align*}
where we use the fact $\varepsilon_k\leq c_0$ for some small absolute constant. 
\end{proof}

\subsection{Proof of Lemma \ref{lemma:psione-norm}}
{
Define
\begin{align*}
    \Q_k
    &:=
    \bar\R_k\bar\D_k^\top
    (\bar\D_k\bar\D_k^\top)^{-1/2}
    \in\RR^{d_k\times\bar r_k}.
\end{align*}
Then $\Q_k^\top\Q_k=\I_{\bar r_k}$. Extend $\Q_k$ to an
orthogonal matrix $\mat{\Q_k&\Q_{k,\perp}}$, and set
\begin{align*}
    \H_{k,1}:=\H_k\Q_k
    \in\RR^{(n-\bar r_k)\times\bar r_k},
    \qquad
    \H_{k,2}:=\H_k\Q_{k,\perp}
    \in\RR^{(n-\bar r_k)\times(d_k-\bar r_k)}.
\end{align*}
By rotational invariance, $\H_{k,1}$ and $\H_{k,2}$ are independent
Gaussian matrices with i.i.d.\ $N(0,\sigma_k^2)$ entries. Moreover,
\begin{align*}
    \Y_k
    &=
    \H_{k,1}^\top
    +(\bar\D_k\bar\D_k^\top)^{-1/2}
      \G_k\H_k^\top.
\end{align*}
Therefore,
\begin{align}
&\op{\Y_k\Z_k^a\M_0\Z_k^a\Y_k^\top}\cdot1(\calE_k)
\notag\\
&\quad\leq
2\op{\H_{k,1}^\top\Z_k^a\M_0\Z_k^a\H_{k,1}}\cdot1(\calE_k)
\notag\\
&\qquad+
2\lambda_{k,\min}^{-2}
\op{
\G_k\H_k^\top\Z_k^a\M_0\Z_k^a\H_k\G_k^\top
}\cdot1(\calE_k).
\label{eq:psione-Y-split}
\end{align}

Define the $\sigma(\H_k)$-measurable event
\begin{align*}
    \calF_{k,H}
    :=\left\{
      \op{\Z_k}\leq C_0(\sqrt{nd_k}+n)\sigma_k^2,\quad
      \op{\H_{k,1}}\leq C\sqrt n\,\sigma_k
    \right\}.
\end{align*}
Because $\Q_k=\bar\R_k\O_k$ for an orthogonal matrix $\O_k$, the
definition of $\calE_{k,1}$ and the bound on $\Z_k$ in
$\calE_{k,2}$ imply $\calE_k\subseteq\calF_{k,H}$. Thus, whenever
we condition on $\H_k$, we may replace $1(\calE_k)$ by
$1(\calF_{k,H})$ before applying a Gaussian Orlicz-norm bound. In
particular, on $\calF_{k,H}$,
\begin{align*}
    \op{\H_{k,1}}
    =\op{\H_k\bar\R_k}
    \leq C\sqrt n\,\sigma_k.
\end{align*}
Uniform conditional $\psi_1$ bounds below imply the corresponding
unconditional bounds by averaging the conditional exponential moments.

We first control the second term in \eqref{eq:psione-Y-split}.
Conditionally on $\H_k$, monotonicity, the inclusion
$\calE_k\subseteq\calF_{k,H}$, and
Lemma~\ref{lemma:psione-operatornorm} applied to $\G_k^\top$ give
\begin{align*}
&\left\|
\op{
\G_k\H_k^\top\Z_k^a\M_0\Z_k^a\H_k\G_k^\top
}\cdot1(\calE_k)
\right\|_{\psi_1\mid\H_k}\\
&\quad\leq
1(\calF_{k,H})C\sigma_k^2
\left\{
\tr(\H_k^\top\Z_k^a\M_0\Z_k^a\H_k)
+\bar r_k\op{\H_k^\top\Z_k^a\M_0\Z_k^a\H_k}
\right\}.
\end{align*}
On $\calF_{k,H}$,
\begin{align*}
    \op{\H_k}^2
    &=\op{\Z_k+d_k\sigma_k^2\I}
      \leq C(n\vee d_k)\sigma_k^2,
\end{align*}
so the last conditional norm is bounded uniformly in $\H_k$.
Averaging its conditional exponential-moment bound therefore gives
\begin{align}
&\psione{
\op{
\G_k\H_k^\top\Z_k^a\M_0\Z_k^a\H_k\G_k^\top
}\cdot1(\calE_k)
}
\notag\\
&\quad\leq
C^{a+1}(n\vee d_k)\sqbrac{(\sqrt{nd_k}+n)\sigma_k^2}^{2a}\sigma_k^4
\bigl(\tr(\M_0)+\bar r_k\op{\M_0}\bigr).
\label{eq:psione-G-term}
\end{align}

It remains to control the first term in
\eqref{eq:psione-Y-split}. Define
\begin{align*}
    \bDelta_1
    &:=
    \H_{k,1}\H_{k,1}^\top,\\
    \bDelta_2
    &:=
    \H_{k,2}\H_{k,2}^\top
    -(d_k-\bar r_k)\sigma_k^2\I_{n-\bar r_k},\\
    \bDelta_3
    &:=
    -\bar r_k\sigma_k^2\I_{n-\bar r_k}.
\end{align*}
Then
\begin{align*}
    \Z_k=\bDelta_1+\bDelta_2+\bDelta_3.
\end{align*}
This identity includes the factors $\sigma_k^2$ in both centered
Wishart components. On $\calE_k$,
\begin{align*}
    \max_{j\in[3]}\op{\bDelta_j}\leq C(\sqrt{nd_k}+n)\sigma_k^2.
\end{align*}
Indeed, the claim is immediate for $\bDelta_1$ and $\bDelta_3$, and
for $\bDelta_2$ it follows from
$\bDelta_2=\Z_k-\bDelta_1-\bDelta_3$.
Let
\begin{align*}
    \calF_{k,2}:=\left\{\op{\bDelta_2}\leq C(\sqrt{nd_k}+n)\sigma_k^2\right\}.
\end{align*}
Then $\calE_k\subseteq\calF_{k,2}$ and $\calF_{k,2}$ is
$\sigma(\H_{k,2})$-measurable. Since the matrix inside the norm is SPSD,
\begin{align*}
&\op{
\H_{k,1}^\top\Z_k^a\M_0\Z_k^a\H_{k,1}
}
\leq
\tr\brac{
\H_{k,1}^\top\Z_k^a\M_0\Z_k^a\H_{k,1}
}
=\op{\M_0^{1/2}\Z_k^a\H_{k,1}}_{\rm F}^2.
\end{align*}
Expanding $\Z_k^a$ and using
$\op{\sum_{\mathbf i}\X_{\mathbf i}}_{\rm F}^2
\leq3^a\sum_{\mathbf i}\op{\X_{\mathbf i}}_{\rm F}^2$ yields
\begin{align}
&\tr\brac{
\H_{k,1}^\top\Z_k^a\M_0\Z_k^a\H_{k,1}
}
\notag\\
&\quad\leq
3^a
\sum_{i_1,\ldots,i_a\in[3]}
\tr\brac{
\H_{k,1}^\top
\bDelta_{i_1}\cdots\bDelta_{i_a}
\M_0
\bDelta_{i_a}\cdots\bDelta_{i_1}
\H_{k,1}
}.
\label{eq:word-expansion}
\end{align}

Consider one word in \eqref{eq:word-expansion}. If the word contains
no $\bDelta_1$, its nonscalar part depends only on $\H_{k,2}$ and is
therefore independent of $\H_{k,1}$. Conditioning on $\H_{k,2}$, using
$1(\calE_k)\leq1(\calF_{k,2})$, and applying
Lemma~\ref{lemma:psione-operatornorm} gives a uniform conditional
$\psi_1$ bound. Averaging the conditional exponential moments gives
\begin{align*}
&\psione{
\op{
\H_{k,1}^\top
\bDelta_{i_1}\cdots\bDelta_{i_a}
\M_0
\bDelta_{i_a}\cdots\bDelta_{i_1}
\H_{k,1}
}\cdot1(\calE_k)
}\\
&\quad\leq
C^{a+1}\big\{(\sqrt{nd_k}+n)\sigma_k^2\big\}^{2a}\sigma_k^2
\bigl(\tr(\M_0)+\bar r_k\op{\M_0}\bigr).
\end{align*}

If the word contains $\bDelta_1$, let its last occurrence before
$\M_0$ be displayed as
\begin{align*}
    \bDelta_{i_1}\cdots\bDelta_{i_a}
    =\A\,\bDelta_1\,\B,
\end{align*}
where $\B$ contains only $\bDelta_2$ and the scalar matrix
$\bDelta_3$. Thus, up to a scalar factor, $\B=\bDelta_2^b$ for some
$b\geq0$. The corresponding matrix can be written as
\begin{align*}
&\bigl(\H_{k,1}^\top\A\H_{k,1}\bigr)
\bigl(
\H_{k,1}^\top\B\M_0\B^\top\H_{k,1}
\bigr)
\bigl(\H_{k,1}^\top\A\H_{k,1}\bigr)^\top.
\end{align*}
On $\calE_k$,
\begin{align*}
    \op{\H_{k,1}^\top\A\H_{k,1}}
    \leq
    C^{a+1}n\sigma_k^2\sqbrac{(\sqrt{nd_k}+n)\sigma_k^2}^{a-b-1}.
\end{align*}
After multiplying the factorization by $1(\calE_k)$, this eventwise
bound reduces the remaining random term to the middle factor times
$1(\calE_k)$. We then use
$1(\calE_k)\leq1(\calF_{k,2})$ and condition on $\H_{k,2}$.
Lemma~\ref{lemma:psione-operatornorm} controls the middle factor
uniformly on $\calF_{k,2}$. Averaging the resulting conditional
exponential-moment bound shows that its contribution is bounded by
\begin{align*}
&C^{a+1}n^2\sigma_k^6\sqbrac{(\sqrt{nd_k}+n)\sigma_k^2}^{2a-2}
\bigl(\tr(\M_0)+\bar r_k\op{\M_0}\bigr)\\
&\qquad\leq
C^{a+1}\sqbrac{(\sqrt{nd_k}+n)\sigma_k^2}^{2a}\sigma_k^2
\bigl(\tr(\M_0)+\bar r_k\op{\M_0}\bigr),
\end{align*}
where the last inequality uses $(\sqrt{nd_k}+n)\sigma_k^2\geq n\sigma_k^2$.
After summing the at most $3^a$ words in
\eqref{eq:word-expansion}, we obtain
\begin{align}
&\psione{
\op{
\H_{k,1}^\top\Z_k^a\M_0\Z_k^a\H_{k,1}
}\cdot1(\calE_k)
}
\notag\\
&\quad\leq
C^{a+1}\sqbrac{(\sqrt{nd_k}+n)\sigma_k^2}^{2a}\sigma_k^2
\bigl(\tr(\M_0)+\bar r_k\op{\M_0}\bigr).
\label{eq:psione-H1-term}
\end{align}
Combining \eqref{eq:psione-Y-split},
\eqref{eq:psione-G-term}, and
\eqref{eq:psione-H1-term}, and using
$n\sigma_k^2/\lambda_{k,\min}^2\lesssim1$, gives
\begin{align*}
&\psione{
\op{
\Y_k\Z_k^a\M_0\Z_k^a\Y_k^\top
\cdot1(\calE_k)
}
}\\
&\quad\leq
C^{a+1}\sqbrac{(\sqrt{nd_k}+n)\sigma_k^2}^{2a}\sigma_k^2
\bigl(\tr(\M_0)+\bar r_k\op{\M_0}\bigr)
\brac{1+d_k\snr_k^{-2}}.
\end{align*}
Finally, the standing perturbation condition gives
$n\sigma_k^2\lambda_{k,\min}^{-2}\lesssim1$.  Therefore
\begin{align*}
\lambda_{k,\min}^{-4}(\sqrt{nd_k}+n)^2\sigma_k^4
&\leq C(n^2+nd_k)\sigma_k^4\lambda_{k,\min}^{-4}\\
&\leq C\sqbrac{n\sigma_k^2\lambda_{k,\min}^{-2}+nd_k\sigma_k^4\lambda_{k,\min}^{-4}}
\leq C\varepsilon_k^2,
\end{align*}
and
\begin{align*}
\lambda_{k,\min}^{-2}\sigma_k^2
\brac{1+d_k\snr_k^{-2}}
&=n^{-1}\sqbrac{n\sigma_k^2\lambda_{k,\min}^{-2}+nd_k\sigma_k^4\lambda_{k,\min}^{-4}}\leq n^{-1}\varepsilon_k^2.
\end{align*}
The claimed bound follows.
}
\section{Technical Lemmas}\label{sec:pf:technicallemmas}
\begin{lemma}[Block-off-diagonal inverse-power bound]\label{lemma:block-offdiag-inverse-power}
Let $\V\in\RR^{d\times r_1}$ and $\W\in\RR^{d\times r_2}$, and suppose $\V$, $\W$, and $\begin{bmatrix}\V&\W\end{bmatrix}$ all have full column rank. Define
\begin{align*}
    \bGa:=\begin{bmatrix}
        \V^\top\V & \V^\top\W\\
        \W^\top\V & \W^\top\W
    \end{bmatrix},
    \qquad
    \delta:=\op{(\V^\top\V)^{-1/2}\V^\top\W(\W^\top\W)^{-1/2}},
\end{align*}
and let $\lambda$ be the smallest singular value of $\begin{bmatrix}\V&\W\end{bmatrix}$. Let $\Pi:=\begin{bmatrix}\I_{r_1}&0\\0&-\I_{r_2}\end{bmatrix}$. For any $\A\in\RR^{(r_1+r_2)\times(r_1+r_2)}$, define
\begin{align*}
    \A_{\mathrm{bd}}:=\frac{\A+\Pi\A\Pi}{2},
    \qquad
    \A_{\mathrm{od}}:=\frac{\A-\Pi\A\Pi}{2}.
\end{align*}
Then $\A_{\mathrm{bd}}$ is block diagonal and $\A_{\mathrm{od}}$ is block off-diagonal. Moreover, for any fixed $a>0$,
\begin{align*}
    \op{(\bGa^{-a})_{\mathrm{od}}}
    \le C_a\lambda^{-2a}\brac{\delta(1-\delta^2)^{-1}\wedge 1},
\end{align*}
where $C_a>0$ depends only on $a$. For the half-integer powers used in the projector expansion, one may take $C_a\leq C^{2a+1}$ for an absolute constant $C>1$.
\end{lemma}
\begin{proof}
Let
\begin{align*}
    \A_0:=\V^\top\V,
    \qquad
    \B_0:=\W^\top\W,
    \qquad
    \C_0:=\V^\top\W.
\end{align*}
For any $t\ge 0$, set $\A_t:=\A_0+t\I_{r_1}$, $\B_t:=\B_0+t\I_{r_2}$, and $\T_t:=\A_t^{-1/2}\C_0\B_t^{-1/2}$. Since $\A_t\succeq \A_0$ and $\B_t\succeq \B_0$, we have $\op{\T_t}\le \op{\A_0^{-1/2}\C_0\B_0^{-1/2}}=\delta$. By the Schur complement formula,
\begin{align*}
    [(\bGa+t\I)^{-1}]_{12}
    =-\A_t^{-1/2}\T_t(\I-\T_t^\top\T_t)^{-1}\B_t^{-1/2}.
\end{align*}
Since the smallest eigenvalue of $\bGa$ is $\lambda^2$, the principal submatrices $\A_0$ and $\B_0$ have smallest eigenvalues lower bounded by $\lambda^2$. Hence
\begin{align*}
    \op{[(\bGa+t\I)^{-1}]_{12}}
    \le \frac{\delta}{1-\delta^2}(t+\lambda^2)^{-1}.
\end{align*}
The trivial bound $\op{[(\bGa+t\I)^{-1}]_{12}}\le \op{(\bGa+t\I)^{-1}}\le (t+\lambda^2)^{-1}$ further gives
\begin{align}\label{eq:resolvent-offdiag-bound}
    \op{[(\bGa+t\I)^{-1}]_{12}}
    \le \brac{\delta(1-\delta^2)^{-1}\wedge1}(t+\lambda^2)^{-1}.
\end{align}
In particular, taking $t=0$ proves the desired bound for $a=1$.

For $0<a<1$, the resolvent representation of fractional powers gives
\begin{align*}
    \bGa^{-a}=c_a\int_0^\infty t^{-a}(\bGa+t\I)^{-1}dt.
\end{align*}
Combining this with \eqref{eq:resolvent-offdiag-bound}, we obtain
\begin{align*}
    \op{[\bGa^{-a}]_{12}}
    \le C_a\brac{\delta(1-\delta^2)^{-1}\wedge1}\int_0^\infty t^{-a}(t+\lambda^2)^{-1}dt
    \le C_a\lambda^{-2a}\brac{\delta(1-\delta^2)^{-1}\wedge1}.
\end{align*}
The same bound holds for the $(2,1)$ block by symmetry, and hence for $(\bGa^{-a})_{\mathrm{od}}$.

For a general fixed $a>0$, write $a=m+b$ with $m\in\NN\cup\{0\}$ and $b\in(0,1]$. Expanding
\begin{align*}
    \bGa^{-a}=(\bGa^{-1})^m\bGa^{-b}
\end{align*}
after decomposing each factor into its block-diagonal and block-off-diagonal parts, every term contributing to the block-off-diagonal part contains at least one block-off-diagonal factor. Applying the preceding bound to one such factor and the ordinary bound $\op{\bGa^{-c}}\le \lambda^{-2c}$ to all remaining factors gives
\begin{align*}
    \op{(\bGa^{-a})_{\mathrm{od}}}
    \le C_a\lambda^{-2a}\brac{\delta(1-\delta^2)^{-1}\wedge1},
\end{align*}
where the constant absorbs the finite number of product terms and depends only on $a$.  If $a$ is a positive half-integer, then $b\in\{1/2,1\}$ and the expansion contains at most $2^{m+1}$ product terms.  The constants in the two base cases are absolute, which yields $C_a\leq C^{2a+1}$.
\end{proof}

\begin{lemma}\label{lemma:prob-psione}
    If we have for $A,B,C>0$, 
    \begin{align*}
        \PP(|X|\geq A(t+C)+B\sqrt{t+C} )\leq 2\exp(-t),
    \end{align*}
    for all $t>0$, then we have
    \begin{align*}
        \psione{X}\leq C_0\brac{AC + A + \frac{B^2}{A}}. 
    \end{align*}
    for some absolute constant $C_0>0$. 
\end{lemma}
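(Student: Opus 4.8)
The plan is a routine tail-to-Orlicz-norm conversion. I would first turn the hypothesized two-sided deviation bound into a clean exponential tail $\PP(|X| > v) \le 2e^{-v/L}$ valid for \emph{all} $v \ge 0$, with $L$ of order $AC + A + B^2/A$, and then feed this into the elementary identity $\EE\exp(|X|/(3L)) = 1 + (3L)^{-1}\int_0^\infty e^{v/(3L)}\PP(|X|>v)\,dv$ to obtain $\EE\exp(|X|/(3L)) \le 2$, which by definition gives $\psione{X} \le 3L$.

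First I would linearize the deviation function by AM--GM: since $B\sqrt{t+C} = \sqrt{A(t+C)}\cdot(B/\sqrt{A}) \le \tfrac12\big(A(t+C) + B^2/A\big)$, we get $A(t+C) + B\sqrt{t+C} \le \tfrac{3A}{2}(t+C) + \tfrac{B^2}{2A}$, so the hypothesis yields $\PP\big(|X| \ge \tfrac{3A}{2}(t+C) + \tfrac{B^2}{2A}\big) \le 2e^{-t}$ for all $t > 0$. Substituting $v = \tfrac{3A}{2}(t+C) + \tfrac{B^2}{2A}$, equivalently $t = \tfrac{2}{3A}(v - v_0)$ with $v_0 := \tfrac{3AC}{2} + \tfrac{B^2}{2A}$, this becomes $\PP(|X| \ge v) \le 2\exp\!\big(-\tfrac{2}{3A}(v - v_0)\big)$ for every $v \ge v_0$.

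Next I would patch this into a single exponential bound valid on all of $[0,\infty)$. Take $L := 4\max\{A, v_0\}$. On the "bulk" $v \le v_0$ — in fact for all $v \le L\ln 2$, and $L\ln 2 > 2v_0$ since $L \ge 4v_0$ — the crude bound $\PP(|X|>v) \le 1 \le 2e^{-v/L}$ suffices. For $v > L\ln 2$ we have $v > 2v_0$, hence $v\big(1 - \tfrac{3A}{2L}\big) \ge v/2 > v_0$ using $L \ge 4A$, which rearranges to $\tfrac{v}{L} \le \tfrac{2}{3A}(v - v_0)$; combined with the previous step this gives $\PP(|X|>v) \le 2e^{-v/L}$. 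Thus $\PP(|X|>v) \le 2e^{-v/L}$ for all $v \ge 0$, and the integral identity produces $\EE\exp(|X|/(3L)) \le 1 + \tfrac{2}{3L}\int_0^\infty e^{-2v/(3L)}\,dv = 2$, i.e. $\psione{X} \le 3L$. Finally $L \le 4A + 4v_0 = 4A + 6AC + 2B^2/A$ is at most an absolute constant times $AC + A + B^2/A$, which proves the claimed bound.

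I do not expect a genuine obstacle; this is a standard equivalence between sub-exponential tails and the $\psi_1$ norm. The only step requiring care is the patching: one must choose the single exponential rate $L$ simultaneously $\gtrsim A$ (so the inverted tail from the second step controls $e^{-v/L}$ for large $v$) and $\gtrsim v_0$ (so the trivial bound $\PP \le 1$ covers the bulk $v \lesssim v_0$), and then verify that this $L$ remains $O(AC + A + B^2/A)$. The factor-$3$ slack in $\EE\exp(|X|/(3L))$ is exactly what absorbs the constant $2$ appearing both in the hypothesized tail and in the definition of $\psione{\cdot}$.
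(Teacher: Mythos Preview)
Your proof is correct. The approach differs from the paper's in a clean way that is worth noting. The paper works directly with the deviation function $x = A(t+C) + B\sqrt{t+C}$: it inverts this relation to bound $t$ from below by $\min\{x/(4A),(x/(4B))^2\}$, then splits the tail integral into a sub-Gaussian piece ($x\le 4B^2/A$) and a sub-exponential piece ($x\ge 4B^2/A$), integrates each separately, and only at the very end uses AM--GM to absorb the cross terms $B\sqrt{C}$ and $B$ into $AC+A+B^2/A$. You instead apply AM--GM at the outset to linearize $B\sqrt{t+C}\le \tfrac12(A(t+C)+B^2/A)$, which collapses the hypothesis to a pure sub-exponential tail $\PP(|X|\ge v)\le 2e^{-2(v-v_0)/(3A)}$ and eliminates the need for any regime split. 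Your route is shorter and more transparent; the paper's route, by retaining the $\sqrt{t}$ term longer, in principle tracks the sub-Gaussian behavior for small $x$ more faithfully, but since the target is only a $\psi_1$ bound this extra precision is never cashed in and both arguments land on the same final quantity $AC+A+B^2/A$.
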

\begin{proof}
For every $p\geq1$, integrate the tail probability using
\begin{align*}
\EE|X|^p
=p\int_0^\infty s^{p-1}\PP(|X|\geq s)\,ds.
\end{align*}
Split the integral at $AC+B\sqrt C$. On the remaining part, use the
change of variable $s=A(t+C)+B\sqrt{t+C}$. The assumed tail bound gives
\begin{align*}
\EE|X|^p
&\leq (AC+B\sqrt C)^p\\
&\quad+2p\int_0^\infty
\sqbrac{A(t+C)+B\sqrt{t+C}}^{p-1}
\left(A+\frac{B}{2\sqrt{t+C}}\right)e^{-t}\,dt.
\end{align*}
Using $(x+y)^p\leq2^{p-1}(x^p+y^p)$ and the standard bounds for the
Gamma integrals $\int_0^\infty t^q e^{-t}dt$, the last display implies
\begin{align*}
    \bigl(\EE|X|^p\bigr)^{1/p}
    \leq C_1\sqbrac{A(p+C)+B\sqrt{p+C}}.
\end{align*}
The standard moment characterization
$\psione{X}\asymp\sup_{p\geq1}p^{-1}(\EE|X|^p)^{1/p}$ now gives
\begin{align*}
\psione{X}
&\leq C_2\sqbrac{A+AC+B+B\sqrt C}\\
&\leq C_0\sqbrac{AC+A+\frac{B^2}{A}}.
\end{align*}
The last step follows twice from Young's inequality:
$B\leq(A+B^2/A)/2$ and
$B\sqrt C\leq(AC+B^2/A)/2$.
\end{proof}

\begin{lemma}\label{lemma:psione-operatornorm}
Let $\G\in\RR^{m\times r}$ have independent mean-zero sub-Gaussian entries
with $\max_{ij}\psitwo{\G_{ij}}\leq1$. Then, for every SPSD matrix
$\M_0\in\RR^{m\times m}$,
\begin{align*}
    \psione{\op{\G^\top\M_0\G}}
    \leq C_1\bigl\{\tr(\M_0)+r\op{\M_0}\bigr\},
\end{align*}
where $C_1>0$ is an absolute constant.
\end{lemma}
\begin{proof}
If $\M_0=0$, the claim is immediate.  For a fixed
$\x\in\mathbb S^{r-1}$, the coordinates of $\G\x$ are independent,
mean-zero, and have uniformly bounded sub-Gaussian norms.  The
Hanson--Wright inequality therefore gives, for every $t>0$,
\begin{align*}
\PP\bigg(
  \x^\top\G^\top\M_0\G\x
  \geq C\tr(\M_0)
  +C\fro{\M_0}\sqrt t
  +C\op{\M_0}t
\bigg)
\leq2e^{-t}.
\end{align*}
Let $\mathcal N$ be a $1/4$-net of $\mathbb S^{r-1}$ with
$|\mathcal N|\leq9^r$. Since $\G^\top\M_0\G$ is SPSD,
\begin{align*}
\op{\G^\top\M_0\G}
\leq2\max_{\x\in\mathcal N}
\x^\top\G^\top\M_0\G\x.
\end{align*}
A union bound, with $t$ replaced by $t+Cr$, yields
\begin{align*}
\PP\bigg(
\op{\G^\top\M_0\G}
\geq C\tr(\M_0)
+C\fro{\M_0}\sqrt{t+r}
+C\op{\M_0}(t+r)
\bigg)
\leq2e^{-t}.
\end{align*}
Applying Lemma~\ref{lemma:prob-psione} to the centered tail part and adding
the deterministic term gives
\begin{align*}
\psione{\op{\G^\top\M_0\G}}
&\leq C\brac{
\tr(\M_0)+r\op{\M_0}
+\frac{\fro{\M_0}^2}{\op{\M_0}}
}\\
&\leq C_1\brac{\tr(\M_0)+r\op{\M_0}},
\end{align*}
where the last inequality uses
$\fro{\M_0}^2\leq\op{\M_0}\tr(\M_0)$.
\end{proof}

\begin{lemma}\label{lemma:psitwo-operatornorm}
    Let $\X\in\RR^{m\times n}$ be a random matrix whose entries $\X_{ij}$ are independent mean-zero sub-gaussian random variables. Then we have 
    \begin{align*}
        \psitwo{\op{\X}} \leq c\cdot\max_{ij}\psitwo{\X_{ij}}\sqrt{m+n}. 
    \end{align*}
\end{lemma}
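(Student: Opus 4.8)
The plan is to use the classical $\varepsilon$-net discretization of the operator norm, reduce to a scalar bilinear form with independent sub-Gaussian summands, and then pass from a tail bound back to the $\psi_2$ norm. Write $K:=\max_{ij}\psitwo{\X_{ij}}$. First I would fix a $\tfrac{1}{4}$-net $\calN$ of the unit sphere $S^{m-1}$ and a $\tfrac{1}{4}$-net $\calN'$ of $S^{n-1}$, with cardinalities $|\calN|\le 9^{m}$ and $|\calN'|\le 9^{n}$; the standard approximation argument then yields $\op{\X}\le 2\max_{\u\in\calN,\,\v\in\calN'}\u^\top\X\v$, so it suffices to control a maximum over at most $9^{m+n}$ scalar random variables.

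For a fixed pair of unit vectors $(\u,\v)$, the quantity $\u^\top\X\v=\sum_{i,j}u_iv_j\X_{ij}$ is a sum of independent, mean-zero sub-Gaussian variables with $\psitwo{u_iv_j\X_{ij}}\le K|u_i|\,|v_j|$. By the general Hoeffding-type bound for sums of independent sub-Gaussians, $\psitwo{\u^\top\X\v}\le cK\bigl(\sum_{i,j}u_i^2v_j^2\bigr)^{1/2}=cK$, since $\sum_i u_i^2=\sum_j v_j^2=1$; this gives the tail bound $\PP(|\u^\top\X\v|\ge t)\le 2\exp(-c_1 t^2/K^2)$ for every $t>0$. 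A union bound over $\calN\times\calN'$ together with the discretization inequality yields $\PP(\op{\X}\ge 2t)\le 2\cdot 9^{m+n}\exp(-c_1 t^2/K^2)$; choosing $t\asymp K\sqrt{m+n+s}$ then gives $\PP\bigl(\op{\X}\ge CK(\sqrt{m+n}+\sqrt{s})\bigr)\le 2e^{-s}$ for all $s>0$.

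Finally, I would convert this sub-Gaussian tail bound into the stated estimate $\psitwo{\op{\X}}\le cK\sqrt{m+n}$ via the standard equivalence between sub-Gaussian tails and the $\psi_2$ norm — the $\psi_2$ analogue of Lemma~\ref{lemma:prob-psione}, obtained by bounding $\EE\exp\bigl(\op{\X}^2/\rho^2\bigr)$ against the tail and taking $\rho\asymp K\sqrt{m+n}$. The argument is entirely routine; the only points that need mild care are tracking the discretization constant in the reduction $\op{\X}\le 2\max_{\u,\v}\u^\top\X\v$ and ensuring that the $s$-dependent part of the tail is absorbed correctly when returning to the $\psi_2$ norm, so that the final constant is absolute and independent of $m$ and $n$.
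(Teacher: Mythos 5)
Your proof is correct and follows essentially the same route as the paper: the paper simply cites Vershynin's Theorem 4.4.5 for the tail bound $\PP\big(\op{\X}\geq cK(\sqrt{m+n}+t)\big)\leq 2e^{-t^2}$ (which is proved there by exactly the $\varepsilon$-net argument you spell out) and then converts the tail to a $\psi_2$ bound via Lemma~\ref{lemma:prob-psitwo}, which is precisely the ``$\psi_2$ analogue'' you invoke in your last step. The only difference is that you re-derive the cited tail bound from scratch; the details you give (net cardinalities, the general Hoeffding bound giving $\psitwo{\u^\top\X\v}\le cK$, and the absorption of the $s$-dependence) are all sound.
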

\begin{proof}
    From \cite[Theorem 4.4.5]{vershynin2018high}, we have 
    \begin{align*}
        \PP\big(\op{\X}\geq c\cdot\max_{ij}\psitwo{\X_{ij}}(\sqrt{m+n}+t)\big)\leq 2e^{-t^2}. 
    \end{align*}
    Together with Lemma \ref{lemma:prob-psitwo}, we finalize the proof. 
\end{proof}

\begin{lemma}\label{lemma:prob-psitwo}
    If we have 
    \begin{align*}
        \PP(|X|\geq C_0(K+t))\leq 2\exp(-t^2),
    \end{align*}
    for all $t>0$ and $K\geq1$, then we have 
    \begin{align*}
        \psitwo{X}\leq 3C_0K. 
    \end{align*}
\end{lemma}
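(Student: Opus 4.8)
The plan is to verify the definition of the sub-Gaussian norm directly. Recall that $\psitwo{X}=\inf\{s>0:\ \EE\exp(X^2/s^2)\le 2\}$, so it suffices to show that $s=3C_0K$ satisfies $\EE\exp\!\big(X^2/(3C_0K)^2\big)\le 2$. As in the proof of \Cref{lemma:prob-psione}, I would start from the layer-cake identity
\begin{align*}
\EE\exp(X^2/s^2)=1+\int_0^\infty \frac{2r}{s^2}\exp(r^2/s^2)\,\PP(|X|>r)\,dr,
\end{align*}
which follows by writing $\exp(x^2/s^2)=1+\int_0^{|x|}(2r/s^2)\exp(r^2/s^2)\,dr$ and taking expectations via Fubini. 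I would then split the integral at the threshold $r=C_0K$: below it only the trivial tail bound is available, above it the hypothesis becomes effective.

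On the range $0\le r\le C_0K$ I use $\PP(|X|>r)\le 1$, so that portion equals exactly $\exp(C_0^2K^2/s^2)-1=e^{1/9}-1$ once $s=3C_0K$. On $r>C_0K$ I substitute $r=C_0(K+t)$ with $t=r/C_0-K>0$, so that the assumption gives $\PP(|X|>r)\le 2\exp(-t^2)$; after this change of variables and a further rescaling $u=t/K$, this portion is at most
\begin{align*}
\frac{4}{9}\int_0^\infty (1+u)\exp\!\Big(-K^2u^2+\tfrac{(1+u)^2}{9}\Big)\,du.
\end{align*}
Using $K\ge 1$ to bound $e^{-K^2u^2}\le e^{-u^2}$ and completing the square in the exponent reduces this to $\tfrac{4}{9}e^{1/8}\int_{-1/8}^{\infty}(\tfrac{9}{8}+v)e^{-\frac{8}{9}v^2}\,dv$, a sum of a full Gaussian integral and a truncated first-moment Gaussian integral that evaluates to an explicit absolute constant. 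Adding this to $e^{1/9}-1$ and the leading $1$ yields a total that is strictly below $2$, which gives $\psitwo{X}\le 3C_0K$.

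The only real subtlety is that the constant $3$ is essentially sharp, so the bookkeeping in the second integral must be done carefully: the crude bound $(K+t)^2\le 2K^2+2t^2$ in the exponent is not quite enough, and one must complete the square exactly and check the numerical values of $e^{1/9}-1$ and of $\int_0^\infty(1+u)e^{-u^2+(1+u)^2/9}\,du$ to confirm that their sum plus $1$ stays below $2$. It is precisely the hypothesis $K\ge 1$ that lets one absorb the cross term $2Kt$ from $(K+t)^2$ into the Gaussian decay $e^{-t^2}$; with a slightly larger constant in place of $3$ the same argument goes through with room to spare and no delicate numerics. Structurally this is the $\psi_2$ analogue of the $\psi_1$ computation carried out for \Cref{lemma:prob-psione}, with $\exp(X^2/s^2)$ replacing $\exp(|X|/K)$.
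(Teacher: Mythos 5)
Your proof is correct in outline but follows a genuinely different route from the paper's. The paper does not touch the moment generating function at all: it directly verifies the sub-Gaussian tail characterization, showing $\PP(|X|\ge s)\le 2\exp\bigl(-s^2/(3C_0K)^2\bigr)$ for all $s>0$ by splitting into $s\le 2C_0K$ (where the trivial bound $\PP\le 1\le 2e^{-4/9}$ suffices) and $s>2C_0K$ (where one sets $t=s/C_0-K\ge s/(2C_0)$ and invokes the hypothesis). You instead verify the Orlicz-norm definition $\EE\exp\bigl(X^2/(3C_0K)^2\bigr)\le 2$ via the layer-cake formula. These are the two standard, equivalent-up-to-absolute-constants characterizations of $\psitwo{\cdot}$; since the lemma pins down the explicit constant $3$, the two proofs are in fact establishing slightly different statements, and yours matches the usual $\inf\{s:\EE e^{X^2/s^2}\le 2\}$ definition exactly, whereas the paper's argument delivers the tail version. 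Neither discrepancy matters downstream (the lemma feeds only into \Cref{lemma:psitwo-operatornorm}, where absolute constants are absorbed), but it is worth being aware that the constant $3$ is definition-dependent.

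The one point where your write-up is not yet a proof is the deferred numerical check, and here the margin is alarmingly thin. Carrying out your computation at the worst case $K=1$: the below-threshold piece contributes $e^{1/9}-1\approx 0.11752$, and the tail piece, after completing the square as you describe, equals $\tfrac49 e^{1/8}\bigl[\tfrac98\int_{-1/8}^{\infty}e^{-\frac89v^2}\,dv+\tfrac{9}{16}e^{-1/72}\bigr]\approx \tfrac49\cdot 1.9855\approx 0.88243$, so the total is $\EE e^{X^2/(3C_0)^2}\le 1.99995$. The bound does hold, but with a margin of about $5\times 10^{-5}$; any additional slack anywhere (for instance, bounding the truncated Gaussian integral by the full one, as a first pass might) pushes the total above $2$ and breaks the argument. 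So your instinct that the bookkeeping must be exact is right, and as submitted the proof is incomplete without that final evaluation. The paper's two-case tail argument avoids this fragility entirely and is the more robust way to get the stated constant; alternatively, your approach becomes painless if one is willing to replace $3$ by, say, $4$.
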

\begin{proof}
We will show $\PP(|X|\geq s)\leq 2\exp(-s^2/(3C_0K)^2)$ for all $s>0$. When $0<s\leq 2C_0K$, we have 
\begin{align*}
    \PP(|X|\geq s)\leq 1 \leq 2\exp(-(2C_0K)^2/(3C_0K)^2) \leq 2\exp(-s^2/(3C_0K)^2).
\end{align*}
When $s>2C_0K$, we have 
\begin{align*}
    \frac{s}{C_0} - K \geq \frac{s}{C_0} - \frac{s}{2C_0} = \frac{s}{2C_0}. 
\end{align*}
And thus $(\frac{s}{C_0} - K)^2\geq \frac{s^2}{(2C_0)^2}$. So we conclude
\begin{align*}
    \PP(|X|\geq s) &= \PP\brac{|X|\geq C_0\brac{K +\frac{s}{C_0} - K}}\\
    &\leq 2\exp\Big(-\brac{\frac{s}{C_0} - K}^2\Big)\leq 2\exp\Big(-\frac{s^2}{(2C_0)^2}\Big)\leq 2\exp\Big(-\frac{s^2}{(3C_0K)^2}\Big),
\end{align*}
where in the last inequality we use $K\geq 1$. 
\end{proof}

\section{Deferred Details from the Main Text}\label{sec:rate-comparison}
In the main text, to facilitate a clear visual comparison, we focused on the proportional scaling regime where $n \asymp d$. In this section, we provide the algebraic comparison for the general setting, showing that, in the perturbative regime considered in this paper, our bound improves the corresponding terms in \cite{yang2025estimating}, up to constants and logarithmic factors.
From \cite[Theorem 1]{yang2025estimating}, they provided the following upper bound:
\begin{align*}
 \snr^{-1}\sqrt{\frac{n}{K} + \frac{1}{K\theta}}\log^{5/2}(n+d)  + \snr^{-2}\bigg(\frac{1}{\theta}+\frac{1}{K\theta^2}\bigg)(\sqrt{nd} + n)\log^{5/2}(n+d). 
\end{align*}
And our bound gives: 
\begin{align*}
   \underbrace{\varepsilon \sqrt{\frac{\log n}{K} + \frac{\log n}{Kn\theta}} }_{\textrm{first-order}} \hspace{0.1cm}+\hspace{0.1cm}\underbrace{\varepsilon^2\sqrt{\frac{\log n}{K\theta^2}} \hspace{0.1cm}+\hspace{0.1cm}\varepsilon^2\cdot \frac{\sum_{k=1}^{K}\big(\delta_k(1-\delta_k^2)^{-1}\wedge 1\big)}{K\theta}}_{\textrm{second-order}}.
\end{align*}
Substituting
$\varepsilon=\sqrt n\,\snr^{-1}+\sqrt{nd}\,\snr^{-2}$ directly into
the first-order term gives
\begin{align*}
&\varepsilon\sqrt{\frac{\log n}{K}+\frac{\log n}{Kn\theta}}=
\snr^{-1}\sqrt{\frac{n\log n}{K}+\frac{\log n}{K\theta}}
+\snr^{-2}\sqrt{\frac{nd\log n}{K}+\frac{d\log n}{K\theta}}.
\end{align*}
The first summand is bounded by the first term of the Yang--Ma bound,
up to its larger logarithmic factor. For the second summand, since
$K\geq1$, $0<\theta\leq1$, and $n\geq1$,
\begin{align*}
\sqrt{\frac{nd}{K}+\frac{d}{K\theta}}
&\leq \sqrt{nd}+\sqrt{d/\theta}
 \leq 2\theta^{-1}(\sqrt{nd}+n).
\end{align*}
It is therefore bounded by the second term of the Yang--Ma bound, again
with a smaller logarithmic factor.

For the second-order part, use $\frac1K\sum_{k=1}^K
\brac{\delta_k(1-\delta_k^2)^{-1}\wedge1}\leq1$
to obtain
\begin{align*}
&\varepsilon^2\theta^{-1}
\left\{\sqrt{\frac{\log n}{K}}
+\frac1K\sum_{k=1}^K
\brac{\delta_k(1-\delta_k^2)^{-1}\wedge1}\right\}\leq
C\theta^{-1}\varepsilon^2\sqrt{\log(n+d)}.
\end{align*}
Moreover,
\begin{align*}
\varepsilon^2
&\leq2n\snr^{-2}+2nd\snr^{-4}\le C\snr^{-2}(n+\sqrt{nd}),
\end{align*}
where the last inequality uses the standing perturbative condition
$\sqrt{nd}\,\snr^{-2}\leq c_0\leq1$. Thus the second-order part is
also bounded by the second term of the Yang--Ma bound, up to constants
and with a smaller logarithmic factor. This proves the claimed
algebraic comparison in the general $n,d$ regime.

\end{document}